\pgfplotsset{compat=1.17}
\newtheorem{proposition}{Proposition}
\newtheorem{theorem}[proposition]{Theorem}
\newtheorem{definition}[proposition]{Definition}
\newtheorem{lemma}[proposition]{Lemma}
\newtheorem{corollary}[proposition]{Corollary}
\newtheorem{example}[proposition]{Example}
\newtheorem{remark}[proposition]{Remark}
\theoremstyle{plain}
\newcommand\DrawGenus[7]{
	\pgfmathsetmacro{\xstart}{#1 - (0.985*#4)}
	\pgfmathsetmacro{\ystart}{#2 + (0.2*#3)}
	
	\draw[color = #6, rotate around={#5:(#1,#2)}, #7] (\xstart, \ystart) arc (190:350:#4  and #3);
	\draw[color = #6, rotate around={#5:(#1,#2)}, #7] (\xstart, \ystart) arc (190:210:#4  and #3) arc (150:30:#4  and #3) arc (330:350:#4  and #3);
}
\newcommand\DrawDonut[7]{
	\pgfmathsetmacro{\fctr}{.08}
	\pgfmathsetmacro{\newwidth}{0.5*#4}
	\pgfmathsetmacro{\newheight}{0.5*#3}
	\draw[color = #6, rotate around={#5:(#1,#2)}, #7] (#1, #2) ellipse (#4  and #3);
	\DrawGenus{#1}{#2}{\newheight}{\newwidth}{#5}{#6}{#7}
}
\begin{document}

	\begin{abstract}
		In this article, we introduce $b$-semitoric systems as a generalization of semitoric systems, specifically tailored for $b$-symplectic manifolds.
		The objective of this article is to furnish a collection of examples and investigate the distinctive characteristics of these systems.
		A $b$-semitoric system is a $4$-dimensional $b$-integrable system that satisfies certain conditions: one of its momentum map components is proper and generates an effective global $S^1$-action, and all singular points are non-degenerate and devoid of hyperbolic components.
		To illustrate this concept, we provide five examples of $b$-semitoric systems by modifying the coupled spin oscillator and the coupled angular momenta, and we also classify their singular points.
		Additionally, we describe the dynamics of these systems through the image of their respective momentum maps.
	\end{abstract}
	
	\author{Joaquim Brugués}
	\address{Joaquim Brugués, Campus Middelheim, Gebouw G, M.G.215 Middelheimlaan 1 2020 Antwerp, Belgium, Laboratory of Geometry and Dynamical Systems, Universitat Polit\`{e}cnica de Catalunya, Avinguda del Doctor Mara\~{n}on 44-50, 08028, Barcelona}
	\email{joaquim.brugues@upc.edu, jbruguesmora@uantwerpen.be}
	
	\author{Sonja Hohloch}
	\address{Sonja Hohloch, Campus Middelheim, Gebouw G, M.G.211 Middelheimlaan 1 2020 Antwerp, Belgium}
	\email{sonja.hohloch@uantwerpen.be}
	
	\author{Pau Mir}
	\address{Pau Mir, Laboratory of Geometry and Dynamical Systems, Universitat Polit\`{e}cnica de Catalunya, Avinguda del Doctor Mara\~{n}on 44-50, 08028, Barcelona}
	\email{pau.mir.garcia@upc.edu}
	
	\author{Eva Miranda}
	\address{Eva Miranda, Laboratory of Geometry and Dynamical Systems and Institut de Matem\`atiques de la UPC-BarcelonaTech (IMTech), Universitat Polit\`{e}cnica de Catalunya, Avinguda del Doctor Mara\~{n}on 44-50, 08028, Barcelona \\ CRM Centre de Recerca Matem\`{a}tica, Campus de Bellaterra
		Edifici C, 08193 Bellaterra, Barcelona}
	\email{eva.miranda@upc.edu}
	
	\thanks{J.\ Brugués was fully and S.\ Hohloch partially funded by the FWO-FNRS Excellence of Science project G0H4518N “Symplectic techniques in differential geometry” with UA Antigoon Project-ID 36584}
	\thanks{P.\ Mir was funded in part by the Doctoral INPhINIT - RETAINING grant ID 100010434 LCF/BQ/DR21/11880025 of “la Caixa” Foundation}
	\thanks{J.\ Brugués, P.\ Mir and E.\ Miranda were partially supported by the Spanish State Research Agency AEI via the grant PID2019-103849GB-I00 of MCIN/ AEI /10.13039/501100011033 and by the Generalitat de Catalunya (AGAUR) via the grant 2021 SGR 00603.}
	\thanks{E. Miranda was supported by the Catalan Institution for Research and Advanced Studies via a 2021 ICREA Academia Prize   and by the Spanish State Research Agency, through the Severo Ochoa and Mar\'{\i}a de Maeztu Program for Centers and Units of Excellence in R\&D (project CEX2020-001084-M)}
	
	\title{Constructions of $b$-semitoric systems}
	
	\maketitle
	
	\section{Introduction}
	
	Hamiltonian systems are used to model many fundamental physical phenomena and are found in various fields in mathematics such as differential geometry, the calculus of variations and celestial mechanics, as well as in other sciences like physics, biology and engineering.
	Particularly, questions of conservation laws and symplectic rigidity  are naturally linked to Hamiltonian systems.
	
	A Hamiltonian system is called \emph{integrable} if it has the maximally possible number of independent conserved quantities.
	The field of integrable systems has a long tradition at the intersection of several disciplines, including dynamical systems, ODEs, PDEs, symplectic geometry, Lie theory, algebraic geometry, classical mechanics, and mathematical physics.
	Integrable systems exhibit some degree of predictability, as their flow lines remain in the fibers of the momentum map and are generically periodic or quasiperiodic.
	However, their global behavior can be complex.
	
	In recent decades, several classification schemes of integrable systems have been constructed based on  a number of invariants that capture various aspects of a system with respect to different notions of equivalence. These classification procedures provide an overview of all possible systems within a certain class and enable the distinction between non-equivalent systems. Integrable systems have a natural semilocal toric action associated with classical action-angle coordinates (see Arnold~\cite{arnold} and Duistermaat~\cite{Duistermaat80}). However, this action does not always extend globally. Notable classifications of symplectic type include the classification of toric systems by Atiyah~\cite{Atiyah82}, Guillemin and Sternberg~\cite{GuilleminSternberg82} and Delzant~\cite{Delzant88}.
	
	A special class of four-dimensional completely integrable systems is the class of \emph{semitoric systems}, where one of the first integrals is proper and generates a global $\mathbb{S}^1$-action, singularities are assumed to be non-degenerate, and none of the singularities exhibits hyperbolic components (see Definition \ref{def:semitoricsystem}).
	These systems were originally defined and studied by V\~{u} Ng\d{o}c in \cite{VuNgoc2003,VuNgoc2007}.
	These systems can be seen as a generalization of toric systems in dimension four with the important difference that only one of the first integrals is required to be periodic.
	This allows for the existence of focus-focus fibres, which in turn obstruct the global existence of action-angle coordinates, cf.\ Duistermaat~\cite{Duistermaat80}.
	
	Semitoric systems can be considered a $4$-dimensional first generalization of toric systems in the sense that one component of the momentum map induces an $\mathbb{S}^1$-action and the other one an $\mathbb{R}$-action on the manifold. For general integrable systems, both components induce $\mathbb{R}$-actions and can exhibit extremely complex behavior. For toric systems, all components give rise to $\mathbb{S}^1$-actions, leading to numerous constraints on the system as a whole.
	
	From a topological point of view, semitoric systems can be described using the theory of singular Lagrangian fibrations, cf.\ Bolsinov and Fomenko \cite{BolsinovFomenko04} and Zung~\cite{Zung03}.
	From the symplectic point of view, they were classified in terms of five symplectic invariants by Pelayo and V\~{u} Ng\d{o}c~\cite{PelayoVuNgoc2009,PelayoVuNgoc2011} for systems with maximally one focus-focus point per fiber.
	This restriction was later overcome by Palmer, Pelayo and Tang~\cite{PalmerPelayoTang19}.
	
	Semitoric systems appear naturally in physics, for example in the Jaynes-Cummings model (see Babelon and Cantini and Douçot \cite{BabelonCantiniDoucot09}) and the coupled angular momenta (see Sadovskii and Zhilinskii \cite{SadovskiiZhilinskii99}).
	During the past decade, semitoric systems were vividly studied: Hohloch, Sabatini and Sepe~\cite{HohlochSabatiniSepe2015} explained the relation of Pelayo and Vu Ngoc~\cite{PelayoVuNgoc2009,PelayoVuNgoc2011} classification with Karshon's~\cite{Karshon1999} classification of Hamiltonian $\mathbb{S}^1$-spaces.
	Alonso, Dullin and Hohloch~\cite{AlonsoDullinHohloch19, AlonsoDullinHohloch20} computed the invariants of the semitoric systems given by the coupled spin-oscillator and the coupled angular momenta.
	Hohloch and Palmer~\cite{HohlochPalmer18} generalized the coupled angular momenta system to a family of systems with two focus-focus points and Alonso and Hohloch~\cite{AlonsoHohloch21} computed the so-called \emph{height invariant} for a subfamily of this system.
	Le Floch and Palmer~\cite{LeFlochPalmer2018} generalized this method and found more examples by perturbing toric systems on Hirzebruch surfaces.
	De Meulenare and Hohloch~\cite{DeMeulenaereHohloch21} eventually constructed a family of systems with four focus-focus points which collide at a certain moment and form two focus-focus fibers with two focus-focus points in each.
	A survey article by Alonso and Hohloch~\cite{AlonsoHohloch19} gives an overview of the state of the art concerning examples and computations of classification invariants reached in 2019.
	Since then, allowing for hyperbolic singularities let to the definition of so-called hypersemitoric systems in Hohloch and Palmer~\cite{HohlochPalmer2021} and explicit examples by Gullentops and Hohloch~\cite{GullentopsHohloch2022}, which were both made possible by the study of parabolic points by Efstathiou and Giaccobe~\cite{EfstathiouGiacobbe2012}, Bolsinov, Guglielmi and Kudryavtseva~\cite{BolsinovGuglielmiKudryavtseva2018}, and Kudryavtseva and Martynchuk~\cite{KudryavtsevaMartynchuk2021existence,KudryavtsevaMartynchuk2021c}.
	
	A natural question regarding symplectic manifolds is their possible generalization to manifolds with boundary (cf.\ Nest and Tsygan~\cite{NestTsygan1996}) or, more generally, to Poisson manifolds that are symplectic away from a hypersurface (cf.\ Guillemin, Miranda and Pires~\cite{GuilleminMirandaPires2014,GuilleminMirandaPires2011} and Gualtieri and Li~\cite{GualtieriLi2014}).
	These structures are present in the literature under the name of $b^m$ or $\log$-symplectic manifolds (see also Miranda and Planas~\cite{MirandaPlanas2018}, Guillemin, Miranda, Pires and Scott~\cite{GuilleminMirandaPiresScott2017}, Guillemin, Miranda and Weitsman~\cite{GuilleminMirandaWeitsman2018convexity,GuilleminMirandaWeitsman2018quantization,GuilleminMirandaWeitsman2019}, Marcut and Osorno-Torres~\cite{MarcutOsorno2014obstructions,MarcutOsorno2014deformation} and Cavalcanti~\cite{Cavalcanti2017} for further inquiries in the topology, geometry and dynamics of $b^m$- or $\log$-symplectic manifolds).
	
	One of the most relevant examples where such singularities arise is the regularization of certain problems in celestial mechanics such as the restricted 3-body problem (cf.\ Kiesenhofer and Miranda~\cite{KiesenhoferMiranda17}, Kiesenhofer, Miranda and Scott~\cite{KiesenhoferMirandaScott2016}, Delshams, Kiesenhofer and Miranda~\cite{DelshamsKiesenhoferMiranda2017}, and Braddell, Delshams, Miranda, Oms and Planas~\cite{BDMOP}).
	More recently, further applications have been developed in the context of Painlevé transcendents (cf.\ Matveeva~\cite{Matveeva2022} and Matveeva and Miranda~\cite{MatveevaMiranda2022}).
	
	In this article we consider \emph{$b$-integrable systems}, the analogue of completely integrable systems for $b$-symplectic manifolds $(M^{2n},Z,\omega)$, first introduced by Guillemin, Miranda and Pires in \cite{GuilleminMirandaPires2014}.
	A $b$-integrable system is given by a \emph{$b$-function} $F:M\to\mathbb{R}^n$ which has maximal rank almost everywhere and whose components are in involution with respect to the bracket induced by the $b$-symplectic structure $\omega$.
	Among the particular features of a $b$-integrable system, there is the fact that they  cannot exhibit fixed points located at the critical hypersurface $Z$, as we discuss in this article.
	Action-angle coordinates for $b$-symplectic manifolds were investigated by Kiesenhofer, Miranda and Scott in \cite{KiesenhoferMirandaScott2016} and also by Miranda and Planas~\cite{MirandaPlanas2023} for $b^m$-symplectic manifolds and by Cardona and Miranda~\cite{CardonaMiranda} for their \emph{folded symplectic manifolds} analog. Action-angle coordinates for general Poisson manifolds were investigated by Laurent-Gengoux, Miranda and Vanhaecke in \cite{LaurentMirandaVanhaecke} however the action-angle coordinates are constructed for regular points of the Poisson structure.
	Kiesenhofer and Miranda~\cite{KiesenhoferMiranda17} provided a model for $b$-focus-focus singularities in dimension $6$.
	However, a global classification for these systems is still pending.
	The purpose of this article is to provide a collection of examples and examine specific features of these systems.
	
	The analog of toric systems to the context of $b$-symplectic manifolds was described by Guillemin, Miranda, Pires and Scott in \cite{GuilleminMirandaPiresScott2015} and by Gualtieri, Li, Pelayo and Ratiu in \cite{GualtieriLiPelayoRatiu2017}.
	In that context, a classification analogous to that of toric manifolds is completely developed by using formations akin to Delzant polytopes. 
	
	A particular type of $b$-integrable systems are \emph{$b$-semitoric systems}. We examine in this article the class of $b$-semitoric systems as a simultaneous generalization of two well-known classes of integrable systems: semitoric systems and $4$-dimensional $b$-toric systems. $b$-Semitoric systems are defined as $4$-dimensional $b$-integrable systems whose singular points are non-degenerate and contain no hyperbolic components. Therefore, the fixed points of a $b$-semitoric system can be only of elliptic-elliptic or focus-focus type and, by the result on non-existence of fixed points at the critical hypersurface $Z$, they are necessarily located away from $Z$.
	
	$b$-Semitoric systems generalize semitoric systems in the sense that they also include systems defined on manifolds in which the symplectic structure can have a singularity along a certain hypersurface.
	In this class of singular symplectic manifolds the variety of integrable systems with semitoric features that can be constructed is wider.
	On the other hand, $b$-semitoric systems generalize $4$-dimensional $b$-toric systems because the induced action is not given by an action of $\mathbb{T}^2$ any longer but, instead, it is just required to have one $\mathbb{S}^1$ component. This allows studying $4$-dimensional $b$-integrable systems which have not only elliptic-elliptic singularities but also fixed points of focus-focus type.
	
	In view of this, there are two natural ways to construct, from pre-existing systems, $b$-semitoric systems that are not just semitoric systems or $b$-toric systems. One way is to take a semitoric system $(M,\omega,F=(f_1,f_2))$, select a singular hypersurface $Z\subset M$ and replace the symplectic form $\omega$ by a $b$-symplectic form which is singular on $Z$. Then, one can modify $f_1$, $f_2$ and associate to them $b$-functions. If this is done in an appropriate way, a $b$-semitoric system is produced from the semitoric system. The other way is to take a $4$-dimensional $b$-toric system $(M,Z,\omega,F=(f_1,f_2))$ and perturb either $f_1$ or $f_2$ in a way that the system is still $b$-integrable with singularities of focus-focus type.
	
	In this article, we take as starting point two pre-existing semitoric systems to construct five different $b$-semitoric systems. On the one hand, we take the coupled spin oscillator, a particular case of the Jaynes-Cummings~\cite{JaynesCummings63} model from quantum optics consisting of the coupling of a classical spin on the two-sphere $\mathbb{S}^2$ with a harmonic oscillator on the plane $\mathbb{R}^2$, see e.g.\ Pelayo and V\~{u} Ng\d{o}c~\cite{PelayoVuNgoc12}. We modify it in two different ways to create two $b$-semitoric systems, the \emph{$b$-coupled spin oscillator} and the \emph{reversed $b$-coupled spin oscillator}.
	
	On the other hand, we take the $1$-parameter family of the coupled angular momenta system, the classical version of the addition of two quantum angular momenta, defined on the product of two copies of $\mathbb{S}^2$.
	It models, for example, the reduced Hamiltonian of a hydrogen-like atom in the presence of parallel electric and magnetic fields, cf.\ Sadovskii, Zhilinskii and Michel~\cite{SadovskiiZhilinskiiMichel96}.
	We modify it in three different ways to create three families of $b$-semitoric systems.
	While the original $1$-parameter family of the coupled angular momenta is an interpolation between a toric system and a semitoric system of toric type, two of our systems interpolate between a $b$-toric system and a $b$-semitoric system.
	
	For all the constructed examples, after proving that they are honest $b$-integrable systems, we classify their fixed points and describe their moment map. By doing so, we completely characterize their dynamics. In the cases of the \emph{$b$-coupled spin oscillator} and the \emph{reversed $b$-coupled spin oscillator} the local analysis to determine the type of the singular points is self-contained in the article. In the cases of the $b$-coupled angular momenta, to determine the type of the singular points we combine the local analysis of these new systems with the already known classification of the singular points of the original coupled angular momenta system.
	
	Our analysis of the different examples of $b$-semitoric systems opens the door to finding a global classification of $b$-semitoric systems. It will presumably depend on a number of invariants similar to the symplectic invariants of the semitoric classification of Pelayo and V\~{u} Ng\d{o}c~\cite{PelayoVuNgoc2009, PelayoVuNgoc2011} and will also take into account the constraints imposed by the underlying $b$-symplectic structure. The classification scheme by Braddell, Kiesenhofer and Miranda \cite{BraddellKiesenhoferMiranda2023} will be key.
	
	Our article is also a good starting point to study other connected models from a $b$-symplectic angle, especially $b$-semitoric models coming from well-known semitoric models such as the octagon (cf.\ De Meulenaere and Hohloch~\cite{DeMeulenaereHohloch21}), the spherical pendulum the champagne bottle and, in general, $4$-dimensional $b$-integrable systems that carry and $\mathbb{S}^1$-action. It also connects with a potential generalization of hypersemitoric systems, introduced by Gullentops and Hohloch~\cite{GullentopsHohloch2022}, to $b$-hypersemitoric systems.
	
	\subsection*{Organization of the article} In Section \ref{Section:preliminaries} we recall the basic results on integrable systems and on the classification of their singularities.
	We provide the basic definitions of toric and semitoric systems and we describe the two examples that we use to construct $b$-semitoric systems, the coupled spin oscillator and the coupled angular momenta.
	We also give the definition of $b$-integrable and $b$-toric systems in this section.
	In Section \ref{sec:bsemitoric} we define the notion of $b$-semitoric system and we prove that it contains no fixed points in $Z$.
	In Section \ref{section:spin-oscillator} we modify the coupled spin oscillator system to construct two $b$-semitoric systems: the \emph{$b$-coupled spin oscillator} and the \emph{reversed $b$-coupled spin oscillator}.
	We identify their singular points, classify them and characterize the image of their momentum maps.
	In Section \ref{section:cam} we modify the $1$-parameter family of the coupled angular momenta system to construct three families of $b$-semitoric systems.
	We also identify and classify their fixed points and compute the image of their momentum maps.
	
	
	\section{Preliminaries}
	\label{Section:preliminaries}
	
	In this section we summarize the main results from the literature that we need throughout the article. We include the basics on completely integrable systems and the particular families of toric and semitoric systems. We also recall the essential notions of $b$-symplectic geometry and $b$-toric systems.
	
	\subsection{Singular points of integrable systems}
	
	Throughout this article, we work with $4$-dimensional symplectic manifolds and in this preliminaries section some of the classical results are adapted to the $4$-dimensional setting, while for others we prefer to keep their general version.
	
	\begin{definition}
		Let $(M^{2n},\omega)$ be a symplectic manifold and let $F=(f_1, \dots, f_n) : M \to \mathbb{R}^n$. The triplet $(M^{2n},\omega,F)$ is said to be a \emph{completely integrable system} if $F$ is a smooth map such that $dF$ has maximal rank almost everywhere and the components $f_i$ are in involution, i.e.\ $\{f_i, f_j\} = 0$ for all $i,j$.
		\label{def:integrablesystem}
	\end{definition}
	
	The map $F = (f_1, \dots , f_n)$ is called \emph{momentum map} and its flow is given by the concatenation of the flows of $f_1,\dots,f_n$ induces a group action of $\mathbb{R}^n$. A point $p \in M$ is \emph{regular} if $dF(p)$ has maximal rank and \emph{singular} (or \emph{critical}) if the rank of $dF(p)$ is lower than $n$. The set $F^{-1}(c)\subset M$ is referred to as the \emph{fiber} over $c\in\mathbb{R}^n$. The connected components of a fiber are called \emph{leaves} and the Arnold-Liouville-Mineur theorem fully describes the dynamics on the ones which are regular, while the understanding of singular fibres, i.e.\ those containing at least one singular point, is not complete and, hence, an active research field.
	
	We are interested in the non-degenerate singular points, for which normal forms were established in the works of Rüssmann~\cite{Russmann64}, Vey~\cite{Vey78}, Colin de Verdière and Vey~\cite{ColindeVerdiereVey79}, Eliasson~\cite{Eliasson84, Eliasson90}, Dufour and Molino~\cite{DufourMolino88}, Miranda~\cite{Miranda2003,Miranda2014}, Miranda and Zung~\cite{MirandaZung04}, Miranda and V\~{u} Ng\d{o}c~\cite{MirandaVuNgoc05}, V\~{u} Ng\d{o}c and Wacheux~\cite{VuNgocWacheux13} and Chaperon~\cite{Chaperon13}.
	
	\begin{theorem}[Local normal form for non-degenerate singularities]
		Consider a $2n$-dimensional completely integrable system $(M,\omega,F=(f_1,\dots,f_n))$ and let $p\in M$ be a non-degenerate singular point. Then
		\begin{enumerate}
			\item there exists an open neighbourhood $U\subset M$ of $p$, local symplectic coordinates $(x_1,\dots,x_n,\xi_1,\dots,\xi_n)$ on $U$ and smooth functions $q_1,\dots,q_n:U\to\mathbb{R}$ such that $p$ corresponds to the origin in these coordinates, $\{q_i,f_j\}=0$ for all $i,j \in \{1,\dots,n\}$ and each $q_i$ is of one of the following forms:
			\begin{itemize}
				\item $q_i = (x_i^2+\xi_i^2)/2$ {\em (elliptic component),}
				\item $q_i = x_i \xi_i$ {\em (hyperbolic component),}
				\item $q_i = x_i \xi_{i+1} - x_{i+1} \xi_i$ and $q_{i+1} = x_i \xi_i + x_{i+1} \xi_{i+1}$ {\em (focus-focus component),}
				\item $q_i = \xi_i$ {\em (regular component).}
			\end{itemize}
			\item If there are no hyperbolic components, then the system of equations $\{q_i,f_j\}=0$ for $ i,j \in \{1,\dots,n\}$ is equivalent to the existence of a local diffeomorphism $g : \mathbb{R}^n\to\mathbb{R}^n$ such that
			$$g \circ f = (q_1,\dots,q_n) \circ (x_1,\dots, x_n, \xi_1,\dots,\xi_n).$$
		\end{enumerate}
		\label{thm:localnf}
	\end{theorem}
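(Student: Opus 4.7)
The plan is to reduce everything to a normal form for the Hessian of $F$ at $p$ and then upgrade this to a full symplectic normal form near $p$. Since $p$ is a non-degenerate singular point, the Hessians $d^2f_i(p)$ span a commutative subalgebra of the quadratic forms on $T_pM$ which, via the symplectic form, yields a Cartan subalgebra of $\mathfrak{sp}(T_pM,\omega_p)$. The first ingredient is Williamson's classification of such Cartan subalgebras: any such subalgebra admits a basis of generators of exactly the four listed types (elliptic, hyperbolic, focus-focus, regular). Applying a linear symplectic change of coordinates on $T_pM$ produces Darboux coordinates $(x_1,\dots,x_n,\xi_1,\dots,\xi_n)$ in which the quadratic parts of (some smooth linear combinations of the) $f_i$ coincide with the $q_i$ of the four normal forms.

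For Part (1) we then promote this linear statement to a local statement on $M$. Pulling back by a linear symplectic map, we may assume $F$ has the prescribed quadratic part at $p=0$ in Darboux coordinates $(x,\xi)$; the task is to find smooth $q_1,\dots,q_n$ in these coordinates, with the prescribed form, such that $\{q_i,f_j\}=0$. This is Eliasson's theorem (and its refinements by Miranda, Miranda--Zung and V\~u Ng\d{o}c--Wacheux cited above). The standard approach is a combination of formal Birkhoff-type normalization with a convergence/smoothness argument: one inductively kills the higher-order terms that fail to commute with the quadratic model. In the absence of hyperbolic components the proof can be streamlined by an averaging (Moser/homotopy) argument using the fact that the quadratic model generates a proper $\mathbb{T}^k\times\mathbb{R}^{n-k}$-action locally, which allows one to symmetrize the system and upgrade formal normalization to a smooth symplectomorphism. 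The hyperbolic case is the classically delicate part and requires the Chaperon/Colin de Verdi\`ere--Vey technique.

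For Part (2), assume there are no hyperbolic components. Then each $q_i$ is either elliptic, focus-focus or regular, and together they generate a local $\mathbb{T}^k\times\mathbb{R}^{n-k}$-action on a neighborhood of $p$. A smooth function commutes with all the $q_i$ if and only if it is a smooth function of $q_1,\dots,q_n$: for elliptic components this is the classical Schwarz lemma on $\mathbb{S}^1$-invariant functions of $(x_i,\xi_i)$, for focus-focus blocks one uses the analogous invariant theory for the corresponding $\mathbb{S}^1\times\mathbb{R}$-action, and for regular components it is obvious. The hypothesis $\{q_i,f_j\}=0$ then yields smooth functions $h_j$ with $f_j=h_j(q_1,\dots,q_n)$. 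Setting $g=(h_1,\dots,h_n)$, we obtain $g\circ F=(q_1,\dots,q_n)\circ(x,\xi)$, and the fact that $F$ has maximal rank on a dense set together with the independence of the $q_i$ on the same set forces $dg(0)$ to be invertible, so $g$ is a local diffeomorphism near $0$. Conversely, if such a $g$ exists then $\{q_i,f_j\}=0$ is immediate.

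The main obstacle is Part (1), specifically the Eliasson-type upgrade from the linear (Williamson) normal form to a symplectic normal form in a full neighbourhood of $p$, because it requires a delicate smooth normalization. Fortunately this is exactly what the cited works of Eliasson, Miranda, Miranda--Zung, Miranda--V\~u Ng\d{o}c, V\~u Ng\d{o}c--Wacheux and Chaperon establish, and we may invoke them directly. Part (2) is then a comparatively soft consequence of invariant theory for the torus action generated by the model, which is available precisely because hyperbolic components are excluded.
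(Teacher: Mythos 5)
The paper gives no proof of this theorem: it is stated as background with citations to R\"ussmann, Vey, Colin de Verdi\`ere--Vey, Eliasson, Dufour--Molino, Miranda, Miranda--Zung, Miranda--V\~{u} Ng\d{o}c, V\~{u} Ng\d{o}c--Wacheux and Chaperon, and your outline --- Williamson's linear classification of Cartan subalgebras of $\mathfrak{sp}(2n,\mathbb{R})$, the Eliasson-type smooth normalization for part (1), and the division/invariant-theory argument for part (2) --- is precisely the route those references take, so deferring the analytic core to them is consistent with how the paper itself treats the statement. One step you should repair in part (2): the invertibility of the differential of your map at the origin does not follow from $F$ having maximal rank on a dense set together with independence of the $q_i$ there (a map can have maximal rank on a dense set and still have singular differential at the one point that matters, as $h(q)=q^3$ composed with a single elliptic block illustrates); what actually forces invertibility is the non-degeneracy hypothesis at $p$, since writing $f_j=h_j(q_1,\dots,q_n)$ makes the quadratic part of $f_j$ at $p$ a linear combination of the model quadratic forms with coefficients $\partial h_j/\partial q_i(0)$, and these quadratic parts can span the Cartan subalgebra only if that Jacobian matrix is invertible. (Also note the map you want is the inverse of $(h_1,\dots,h_n)$, since $f_j=h_j(q)$ gives $F=h\circ q$ rather than $h\circ F=q$; this is immaterial once invertibility is established.)
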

	
	The number of elliptic, hyperbolic, and focus-focus components locally classifies a non-degenerate singular point and is referred to as its \emph{Williamson type}. In view of this, integrable systems on 4-dimensional manifolds admit exactly six possible types of non-degenerate singular points:
	
	\begin{itemize} 
		\item
		{\em rank 0 (fixed points):} 
		elliptic-elliptic,
		focus-focus,
		hyperbolic-hyperbolic,
		hyperbolic-elliptic.
		\item
		{\em rank 1 (one-dimensional orbits of the induced action):}
		elliptic-regular,
		hyperbolic-regular.
	\end{itemize}
	
	In Sections \ref{section:spin-oscillator} and \ref{section:cam} we classify the singular points of a number of integrable systems defined in $4$-dimensional manifolds. For the classification we follow the recipe of Bolsinov and Fomenko~\cite{BolsinovFomenko04} and use their notation conventions, which we introduce next.
	
	\begin{definition}
		\cite[Bolsinov and Fomenko, Definition 1.22]{BolsinovFomenko04}
		A fixed point of the completely integrable system $(M,\omega,F=(f_1,f_2))$ is non-degenerate if the Lie algebra $K(f_1,f_2)$ generated by the linear parts of the Hamiltonian vector fields $X_{f_1}$ and $X_{f_2}$ is a Cartan subalgebra in $\mathfrak{sp}(4,\mathbb{R})$.
		\label{def:nondegfixedpoints}
	\end{definition}
	
	The Lie algebra $K(f_1, f_2)$ can be described in terms of $f_1$ and $f_2$ and, in particular, in terms of their quadratic parts, i.e., the Hessians $d^2f_1$ and $d^2f_2$. They generate the linear symplectic operators $A_{f_1} = \Omega^{-1} d^2f_1$ and $A_{f_2}= \Omega^{-1} d^2f_2$, where $\Omega$ is the matrix of the symplectic structure, which coincide with the linearizations of $X_{f_1}$ and $X_{f_2}$ at the singular point.
	
	To check if the algebra $K(f_1,f_2)$ generated by the linear operator $c_1 A_{f_1} + c_2 A_{f_2}$ is a Cartan subalgebra in $\mathfrak{sp}(4,\mathbb{R})$, it has to be first checked that it is two-dimensional and contains an element whose eigenvalues are all different. Then, it has to be proved that it is conjugate to one of the next four Cartan subalgebras of $\mathfrak{sp}(4,\mathbb{R})$ (classified by Williamson~\cite{Williamson36}):
	\begin{equation}
		\label{eq:classification_cartan}
		\begin{pmatrix}
			0 & 0 & -\alpha & 0\\
			0 & 0 & 0 & -\beta\\
			\alpha & 0 & 0 & 0\\
			0 & \beta & 0 & 0
		\end{pmatrix}
		\begin{pmatrix}
			-\alpha & 0 & 0 & 0\\
			0 & 0 & 0 & -\beta\\
			0 & 0 & \alpha & 0\\
			0 & \beta & 0 & 0
		\end{pmatrix}
		\begin{pmatrix}
			-\alpha & 0 & 0 & 0\\
			0 & -\beta & 0 & 0\\
			0 & 0 & \alpha & 0\\
			0 & 0 & 0 & \beta
		\end{pmatrix}
		\begin{pmatrix}
			-\alpha & -\beta & 0 & 0\\
			\beta & -\alpha & 0 & 0\\
			0 & 0 & \alpha & -\beta\\
			0 & 0 & \beta & \alpha
		\end{pmatrix},
	\end{equation}
	where $\alpha,\beta\in\mathbb{R}$.
	
	Then, the type of a non-degenerate fixed point $p$ is determined by the conjugacy class of $c_1 A_{f_1} + c_2 A_{f_2}$ or, in practice, by its eigenvalues $\lambda_1,\lambda_2,\lambda_3,\lambda_4$ (see Williamson~\cite{Williamson36}, Eliasson~\cite{Eliasson90} and Miranda and Zung~\cite{MirandaZung04}). In particular, the type of $p$ is:
	\begin{itemize}
		\item 
		\emph{elliptic-elliptic:} four imaginary eigenvalues $\{\lambda_1, \lambda_2\} = \{\pm i \alpha\}$ and $\{\lambda_3, \lambda_4 \} = \{\pm i\beta\}$,
		\item 
		\emph{elliptic-hyperbolic:} two real and two imaginary eigenvalues$\{\lambda_1, \lambda_2\} = \{\pm i \alpha\}$ and $\{\lambda_3, \lambda_4\} = \{\pm \beta\}$,
		\item 
		\emph{hyperbolic-hyperbolic:} four real eigenvalues $\{\lambda_1, \lambda_2\} = \{\pm \alpha\}$ and $\{\lambda_3, \lambda_4\} = \{\pm \beta\}$,
		\item 
		\emph{focus-focus:} four complex eigenvalues $\{\lambda_1, \lambda_2, \lambda_3, \lambda_4\} =  \{\pm  \alpha \pm i\beta\}$
	\end{itemize}
	with $\alpha, \beta \in \mathbb{R}^{\neq 0}$ and $\alpha \neq \beta$ for the elliptic-elliptic and hyperbolic-hyperbolic cases.
	
	Summarizing, the non-degeneracy and the type of a fixed point can be determined through the eigenvalues of a linear operator $A$ given by a linear combination of $A_{f_1}=\Omega^{-1} d^2f_1$ and $A_{f_2}=\Omega^{-1} d^2f_2$ whose eigenvalues are all different. The matrix of $A$ will be conjugate to one of the above matrices and its spectrum will be one of the four above types.
	
	\subsection{Toric and semitoric systems}
	
	A group action is called \emph{effective} if the neutral element is the only one acting trivially.
	The class of completely integrable  toric systems is the most accessible and it is natural to try to extend their classification to more general systems.
	
	\begin{definition}
		A $2n$-dimensional completely integrable system $(M,\omega,\mathcal{F})$ is \emph{toric} if the flow of $\mathcal{F}$ generates an effective action of $\mathbb{T}^n$ on $M$.
		\label{def:toricsystem}
	\end{definition}
	
	Now consider the following class of polytopes:
	
	\begin{definition}
		A convex polytope $\Delta \subset \mathbb{R}^n$ is a \emph{Delzant polytope} if it is:
		\begin{enumerate}
			\item \emph{simple}, i.e.\ exactly $n$ edges meet at each vertex,
			\item \emph{rational}, i.e., all edges have rational slope, meaning, they are of the form $p+vt$  where $p\in \mathbb{R}^n$ is the vertex, $v \in \mathbb{Z}^n$ is the directional vector of the given edge, and $t \in \mathbb{R}$ and
			\item \emph{smooth}, i.e.\ at each vertex, the directional vectors of the meeting edges form a basis for $\mathbb{Z}^n$.
		\end{enumerate}
		\label{def:delzantpolytope}
	\end{definition}
	
	Toric systems on compact connected manifolds are completely classified in terms of the images of their momentum map:
	
	\begin{theorem}[Delzant~\cite{Delzant88}]
		Up to symplectic equivariance, any toric system $(M,\omega,F)$ on a compact connected symplectic $2n$-dimensional manifold $(M, \omega)$ is determined by $F(M)$, which is a Delzant polytope. Conversely, for any Delzant polytope $\Delta$, there exists a compact connected symplectic $2n$-dimensional manifold $(M, \omega)$ and a momentum map $F: M \to \mathbb{R}^n$ such that $(M,\omega,F)$ is toric with $F(M)=\Delta$.
		\label{thm:Delzant}
	\end{theorem}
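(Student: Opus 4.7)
The plan is to prove both directions of the claim separately. For the forward direction, I would first invoke the Atiyah--Guillemin--Sternberg convexity theorem to conclude that $F(M)$ is a convex polytope, namely the convex hull of the images $F(p)$ of the fixed points $p$ of the $\mathbb{T}^n$-action induced by $F$. To verify that $F(M)$ is Delzant, I would apply the equivariant local normal form for Hamiltonian torus actions at a fixed point: near any fixed point $p$, there are symplectic coordinates on $\mathbb{C}^n$ in which $\mathbb{T}^n$ acts with weights $\alpha_1, \ldots, \alpha_n \in \mathbb{Z}^n$, and the moment map reads $F(z) = F(p) + \sum_{i} \tfrac{1}{2}|z_i|^2\, \alpha_i$. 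Effectiveness of the action forces $\{\alpha_1, \ldots, \alpha_n\}$ to be a $\mathbb{Z}$-basis of $\mathbb{Z}^n$, which yields simultaneously simplicity, rationality, and smoothness at the vertex $F(p)$ of $F(M)$.

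For uniqueness up to equivariant symplectomorphism, I would compare two systems $(M_1, \omega_1, F_1)$ and $(M_2, \omega_2, F_2)$ with $F_1(M_1) = F_2(M_2) = \Delta$. Over the interior of $\Delta$, each preimage is a principal $\mathbb{T}^n$-bundle with a natural trivialization via action-angle coordinates, producing a canonical equivariant symplectomorphism between the two open dense sets. I would then extend this map across the boundary strata by matching it with the uniqueness of the local normal form on each face of $\Delta$, gluing the resulting patches with a Moser-type argument that deforms the equivariant diffeomorphism into a symplectomorphism while preserving the moment map.

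For the converse, given a Delzant polytope $\Delta \subset \mathbb{R}^n$ with $d$ facets, primitive inward normals $v_1, \ldots, v_d \in \mathbb{Z}^n$, and scalars $\lambda_1, \ldots, \lambda_d \in \mathbb{R}$ such that $\Delta = \{x \in \mathbb{R}^n : \langle x, v_i\rangle \ge \lambda_i \text{ for all } i\}$, I would carry out the Delzant construction. The surjection $\pi : \mathbb{R}^d \to \mathbb{R}^n$ sending $e_i$ to $v_i$ descends to a surjection $\mathbb{T}^d \to \mathbb{T}^n$ whose kernel $N$ acts on $\mathbb{C}^d$, equipped with the standard symplectic form, in a Hamiltonian way with moment map obtained by shifting $(|z_1|^2, \ldots, |z_d|^2)/2$ by $(\lambda_1, \ldots, \lambda_d)$ and restricting to the Lie algebra of $N$. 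The Delzant conditions ensure that $0$ is a regular value of the $N$-moment map and that $N$ acts freely on its zero level set, so symplectic reduction yields a compact connected symplectic $2n$-manifold $M_\Delta$ with a residual Hamiltonian $\mathbb{T}^n$-action whose moment map image is precisely $\Delta$.

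The main obstacle will be the uniqueness part of the forward direction. While action-angle coordinates on the open dense regular locus yield an equivariant symplectomorphism there, extending it coherently across the stratified boundary of $\Delta$ and then upgrading the global equivariant diffeomorphism to an honest symplectomorphism via a Moser argument requires careful compatibility with the local normal forms at every face; verifying that the intermediate forms along the Moser path remain symplectic and share the same moment map throughout the isotopy is the most delicate point. The construction in the converse direction is then essentially linear-algebraic once the Delzant hypotheses are used to guarantee regularity and freeness of the $N$-action.
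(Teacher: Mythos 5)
This theorem appears in the paper purely as a quoted classical result, attributed to Delzant, and the paper supplies no proof of it; there is therefore no internal argument to compare yours against, and your sketch can only be measured against the standard literature proof. On that score your outline is essentially the canonical one: Atiyah--Guillemin--Sternberg convexity identifies $F(M)$ with the convex hull of the images of the fixed points; the equivariant local normal form at a fixed point, combined with faithfulness of the isotropy representation (which itself needs the standard observation that a subtorus acting trivially near a fixed point of a connected manifold acts trivially everywhere), forces the weights at each vertex to be a $\mathbb{Z}$-basis of $\mathbb{Z}^n$ and hence yields the Delzant conditions; and the converse is the Delzant construction, i.e.\ symplectic reduction of $\mathbb{C}^d$ by the kernel $N$ of $\mathbb{T}^d \to \mathbb{T}^n$, where the Delzant hypotheses are exactly what make $0$ a regular value of the $N$-moment map and the $N$-action on the level set free.

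The one step where your plan is genuinely thin, and where it departs from how the argument is usually closed, is uniqueness. Delzant does not produce a single equivariant symplectomorphism over the interior of $\Delta$ and then push it across the boundary with one global Moser isotopy; he covers the polytope by open sets over which both fibrations admit equivariant normal forms and observes that the obstruction to gluing the local identifications into a global one is a class in the first \v{C}ech cohomology of $\Delta$ with coefficients in the sheaf of local automorphisms of the model fibrations, which vanishes because $\Delta$ is convex, hence contractible. Your version has to confront the fact that the action-angle trivialization over the interior is not canonical --- it is only determined up to exactly that sheaf of automorphisms --- so ``the'' canonical equivariant symplectomorphism over the regular locus does not exist until the cohomological bookkeeping has been done; moreover a Moser path that is required to preserve the moment map at every time is a strong constraint that you would still need to justify. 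If you replace that step by the \v{C}ech gluing argument (or by an equally careful sheaf-theoretic treatment of the action-angle ambiguity), the rest of your sketch is a correct account of the theorem.
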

	
	Toric systems do not admit singular points with hyperbolic or focus-focus components, just elliptic ones. In dimension $4$, a natural generalization of toric systems is the wider class of semitoric systems, which allows for the existence of more complicated fixed points.
	
	\begin{definition}
		A $4$-dimensional completely integrable system $(M,\omega, F=(L,H))$ is \emph{semitoric} if
		\begin{enumerate}
			\item $L$ is proper and generates an effective $\mathbb{S}^1$-action on $M$ and
			\item all singular points of $F=(L,H)$ are non-degenerate and do not include hyperbolic components.
		\end{enumerate}
		\label{def:semitoricsystem}
	\end{definition}
	
	Semitoric systems can have singular points of three types: elliptic-elliptic, focus-focus, and elliptic-regular. A semitoric system $(M,\omega,F=(L,H))$ is \emph{simple} if there is at most one focus-focus point in each fiber of $L$. Simple semitoric systems were classified by Pelayo and V\~{u} Ng\d{o}c~\cite{PelayoVuNgoc2009, PelayoVuNgoc2011} in terms of five invariants: the number of focus-focus points, the semitoric polygon, the height invariant, the Taylor series invariant, and the twisting index invariant. This classification was extended to non-simple systems by Palmer, Pelayo and Tang~\cite{PalmerPelayoTang19}. The first three invariants were already developed by V\~{u} Ng\d{o}c in \cite{VuNgoc2007} and in the work of Le Floch and Palmer~\cite{LeFlochPalmer2018} they are fit together into a single invariant called the \emph{marked semitoric polygon invariant}. The Taylor series invariant was also constructed by V\~{u} Ng\d{o}c~\cite{VuNgoc03}. In general, the behavior of semitoric systems is much more complicated due to the presence of focus-focus singularities. Different from toric systems where the classifying momentum polytope is determined by its finite number of vertices, semitoric systems may depend on infinitely many data, in particular when the Taylor series invariant has infinitely many non-vanishing coefficients.
	The two examples of semitoric systems that we turn into $b$-semitoric systems are the coupled spin-oscillator and the coupled angular momenta.
	
	\subsubsection{The coupled spin-oscillator}
	\label{sec:classicalcso}
	
	One of the simplest examples of semitoric systems is the coupling of a classical spin on the $2$-sphere $\mathbb{S}^2$ with a harmonic oscillator in the plane $\mathbb{R}^2$. The classical system is a simplification of the Jaynes-Cummings model (see Babelon, Cantini and Doucot~\cite{BabelonCantiniDoucot09}) and was studied in detail by Pelayo and V\~{u} Ng\d{o}c in \cite{PelayoVuNgoc12} and by Alonso, Dullin and Hohloch in \cite{AlonsoDullinHohloch19}.
	
	Let $\rho_1,\rho_2>0$ be positive constants. Consider the product manifold $M=\mathbb{S}^2 \times \mathbb{R}^2$ with Cartesian coordinates $(x,y,z)$ on the unit sphere $\mathbb{S}^2 \subset \mathbb{R}^3$ and Cartesian coordinates $(u,v)$ on the plane $\mathbb{R}^2$. Consider the symplectic form $\omega = -\rho_1\, \omega_{\mathbb{S}^2} + \rho_2\, \omega_{\mathbb{R}^2}$ on $M$, where $\omega_{\mathbb{S}^2}$ and $\omega_{\mathbb{R}^2}$ are the standard symplectic structures on $\mathbb{S}^2$ and $\mathbb{R}^2$ respectively.
	
	\begin{definition}
		The \emph{coupled spin-oscillator} is a $4$-dimensional Hamiltonian integrable system $(M,\omega,F=(L,H))$, where
		\begin{equation*}
			\begin{cases}
				L(x,y,z,u,v)=\rho_1 z + \rho_2 \frac{u^2+v^2}{2} ,\\
				H(x,y,z,u,v)=\frac{xu+yv}{2} .
			\end{cases}
		\end{equation*}
		\label{def:coupledspin-oscillator}
	\end{definition}
	
	The coupled spin-oscillator system is completely integrable and semitoric (see Pelayo and V\~{u} Ng\d{o}c~\cite{PelayoVuNgoc12}). The map $L$ is the momentum map for the simultaneous rotations of the sphere around its vertical axis and of the plane around the origin. The map $H$ measures the difference between the polar angles on the sphere and on the plane. It has one focus-focus singularity (at the point $m:=(0,0,1,0,0)$), one elliptic-elliptic singularity (at the point $p:=(0,0,-1,0,0)$) and two one-parameter families of elliptic-regular singularities emanating from $p$. The image of the moment map of the coupled spin-oscillator is depicted in Figure \ref{fig:momentmapcso}.
	
	\begin{figure}[ht!]
		\begin{tikzpicture}
			\begin{axis}[ticks=none,
				axis y line=center,
				axis x line=middle,
				axis on top,xmin=-1,xmax=5,
				domain=1:11,xlabel=$L$,ylabel=$H$]
				\addplot[fill,mark=none,blue!10,samples=100]
				({(x^2-3)/(2*x)},{(x^2-1)/(2*x^(3/2))}) \closedcycle;
				\addplot[mark=none,blue,samples=100]
				({(x^2-3)/(2*x)},{(x^2-1)/(2*x^(3/2))});
				\addplot[fill,mark=none,blue!10,samples=100]
				({(x^2-3)/(2*x)},{-(x^2-1)/(2*x^(3/2))}) \closedcycle;
				\addplot[mark=none,blue,samples=100]
				({(x^2-3)/(2*x)},{-(x^2-1)/(2*x^(3/2))});
				\addplot[blue, mark=*,mark options={mark size=2pt}] coordinates {(-1,0)};
				\addplot[red, mark=*,mark options={mark size=2pt}] coordinates {(1,0)};
			\end{axis}
		\end{tikzpicture}
		\caption{Image of the momentum map of the coupled spin-oscillator.  The blue dot is the image of the elliptic-elliptic singularity and the red dot is the image of the focus-focus singularity.}
		\label{fig:momentmapcso}
	\end{figure}
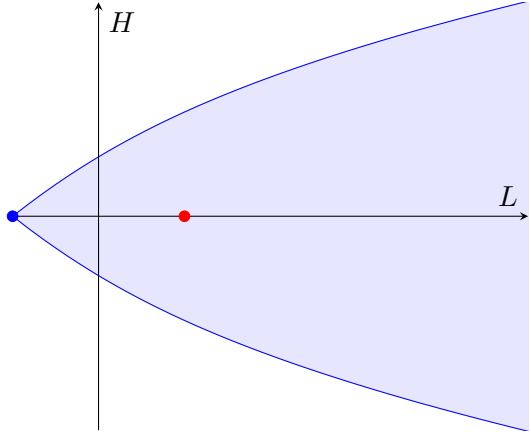
	
	On $\mathbb{S}^2$ we will use Cartesian coordinates $(x,y,z)$ with the assumption that $x^2+y^2+z^2=1$. Nevertheless, to make explicit computations it is more convenient to use Cartesian coordinates $(x,y)$ away from the equator and to use cylindrical coordinates $(\theta,z)$, defined by
	\begin{equation*}
		\begin{cases}
			z = \pm\sqrt{1-x^2-y^2} ,\\
			\theta = \arg\left(x+iy\right),
		\end{cases}
	\end{equation*}
	away from the poles.
	
	Explicitly, the appropriate charts to work away of the equator $z=0$ are $(\varphi,U^+)$ and $(\varphi,U^-)$, where $U^+=\{(x,y,z)\in\mathbb{S}^2\mid z>0\}\subset \mathbb{S}^2$, $U^-=\{(x,y,z)\in\mathbb{S}^2\mid z<0\}\subset \mathbb{S}^2$ and
	$$\begin{array}{rccc}
		\varphi:&\mathbb{S}^2\subset\mathbb{R}^3&\longrightarrow&\mathbb{R}^2\\
		&(x,y,z)&\longmapsto&(x,y) .
	\end{array}$$
	To work away from the poles $z=\pm 1$ an appropriate chart is $(\phi,U^0)$, where $U^0=\{(x,y,z)\in\mathbb{S}^2\mid \lvert z\rvert<1\}\subset \mathbb{S}^2$ and
	$$\begin{array}{rccc}
		\phi:&\mathbb{S}^2\subset\mathbb{R}^3&\longrightarrow&\mathbb{S}^1\times\mathbb{R}^1\\
		&(x,y,z)&\longmapsto&(\theta,z).
	\end{array}$$
	
	These coordinate charts together with the standard $(u,v)$ coordinate charts on $\mathbb{R}^2$ extend to natural charts on $\mathbb{S}^2 \times \mathbb{R}^2$. The symplectic form $\omega= -\rho_1\, \omega_{\mathbb{S}^2} + \rho_2\, \omega_{\mathbb{R}^2}=$ writes as
	$$\omega = -\rho_1\frac{1}{\pm\sqrt{1-x^2-y^2}}dx\wedge dy + \rho_2\,du \wedge dv$$
	on $M^{\pm}:=U^{\pm}\times\mathbb{R}^2$, and as
	$$\omega = -\rho_1\, d\theta \wedge dz + \rho_2\,du \wedge dv,$$
	on $M^0:=U^0\times\mathbb{R}^2$.
	
	The functions $L$ and $H$ can be rewritten on $M^{\pm}$ as
	\begin{equation*}
		\begin{cases}
			L(x,y,u,v)= \pm\rho_1\sqrt{1-x^2-y^2} + \rho_2 \frac{u^2+v^2}{2}\\
			H(x,y,u,v)= \frac{1}{2}\left(xu+yv\right)
		\end{cases},
	\end{equation*}
	and, on $M^0$, as
	\begin{equation*}
		\begin{cases}
			L(z,\theta,u,v)= \rho_1 z + \rho_2 \frac{u^2+v^2}{2}\\
			H(z,\theta,u,v)= \frac{\sqrt{1-z^2}}{2}\left(u\cos\theta + v\sin\theta\right)
		\end{cases}.
	\end{equation*}
	
	\subsubsection{The coupled angular momenta}
	\label{subsec:classicalcam}
	The coupling of two quantum angular momenta was studied by Sadovksii and Zhilinskii~\cite{SadovskiiZhilinskii99} and the classical version of the same system is a well-known compact semitoric system which has been studied in detail by Hohloch and Palmer~\cite{HohlochPalmer18}, Le Floch and Pelayo~\cite{LeFlochPelayo19} and Alonso, Dullin and Hohloch~\cite{AlonsoDullinHohloch20}, and in particular its invariants by Alonso and Hohloch~\cite{AlonsoHohloch21}.
	
	Consider $M=\mathbb{S}^2 \times \mathbb{S}^2$ and endow it with the symplectic form $\omega = -(R_1 \omega_{\mathbb{S}^2} + R_2 \omega_{\mathbb{S}^2})$, where $\omega_{\mathbb{S}^2}$ is the standard symplectic form of $\mathbb{S}^2$ and $0<R_1<R_2$ are constants. 
	
	Let $(x_i,y_i,z_i)$ be Cartesian coordinates on the unit sphere $x_i^2 + y_i^2 + z_i^2 = 1$, where $i \in \{1,2\}$ and consider a parameter $t \in \mathbb{R}$. The coupled angular momenta is  the family of $4$-dimensional completely integrable systems parameterized by $t$ and defined by
	\begin{equation}
		\begin{cases}
			L(x_1,y_1,z_1,x_2,y_2,z_2)\;:= R_1z_1 + R_2z_2,\\
			H(x_1,y_1,z_1,x_2,y_2,z_2):= (1-t) z_1 + t (x_1x_2 + y_1y_2 + z_1z_2).
			\label{eq:classicalcam}
		\end{cases}
	\end{equation} 
	
	The system has $4$ fixed points at $p_{\pm,\pm}=(0,0,\pm 1,0,0,\pm 1)$, see Sadovksii and Zhilinskii~\cite{SadovskiiZhilinskii99}. All of them are of elliptic-elliptic type for all values of $t$ except for $p_{+,-}=(0,0,1,0,0,-1)$, which is non-degenerate and of elliptic-elliptic type if $t<t^-$ or $t>t^+$, of focus-focus type for $t^- < t < t^+$ and degenerate for $t\in\{t^-,t^+\}$, where
	\begin{equation*}
		t^\pm = \frac{R_2}{2R_2+R_1\mp 2\sqrt{R_1 R_2}}.
	\end{equation*} 
	
	It can be shown that $0<t^- < \frac{1}{2}  < t^+\leq 1$, meaning that for the value $t=\frac{1}{2}$ there is always a focus-focus singularity.
	
	Throughout the section, it will be useful to work with different charts on $M=\mathbb{S}^2 \times \mathbb{S}^2$. We are interested in proving the global properties of the system, for which the double cylindrical chart is well suited, but also in studying local behaviours around the fixed points at the double poles, for which double Cartesian charts are better.
	
	To study the system away from the poles $z_1,z_2=\pm 1$, it is useful to rewrite it using the double cylindrical coordinate chart $(\phi_1,U_1^0)\times(\phi_2,U_2^0)$, where $U_i^0=\{(x_i,y_i,z_i)\in\mathbb{S}^2\mid \lvert z\rvert<1\}\subset \mathbb{S}^2$ for $i \in \{1,2\}$ and
	$$\begin{array}{rccc}
		\phi_i:&\mathbb{S}^2\subset\mathbb{R}^3&\longrightarrow&\mathbb{S}^1\times\mathbb{R}^1\\
		&(x_i,y_i,z_i)&\longmapsto&(\theta_i,z_i).
	\end{array}$$
	In these coordinates the symplectic form $\omega = -(R_1 \omega_{\mathbb{S}^2} + R_2 \omega_{\mathbb{S}^2})$ is
	$$\omega = -R_1\, d\theta_1 \wedge dz_1 - R_2\, d\theta_2 \wedge dz_2,$$
	and the system writes as:
	\[\left\{ \begin{array}{l}
		L(z_1,\theta_1,z_2,\theta_2) = R_1 z_1 + R_2 z_2 ,\\
		H(z_1,\theta_1,z_2,\theta_2) = (1 - t) z_1 + t\left(\sqrt{(1 - z_1^2)(1 - z_2^2)} \cos(\theta_1 - \theta_2) + z_1 z_2\right) .
	\end{array} \right.\]
	This chart covers the entire $M$ except for the four fixed points at the double poles and it is appropriate to study the system globally.
	
	On the other hand, to study the system around the fixed points at the double poles, we can use the same charts for $\mathbb{S}^2$ introduced in the study of the spin oscillator and write the system in double Cartesian coordinates.
	Explicitly, the appropriate charts to work everywhere except at the equators $z_1=0$ and $z_2=0$ of $M=\mathbb{S}^2 \times \mathbb{S}^2$ are $(\varphi_1,U_1^{\varepsilon_1})\times (\varphi_2,U_2^{\varepsilon_2})$, with $\varepsilon_1,\varepsilon_2\in\{+,-\}$ and where $U_i^{\varepsilon_i}=\{(x_i,y_i,z_i)\in\mathbb{S}^2\mid \varepsilon_i z_i>0\}\subset \mathbb{S}^2$ for $i=1,2$ and
	$$\begin{array}{rccc}
		\varphi_i:&\mathbb{S}^2\subset\mathbb{R}^3&\longrightarrow&\mathbb{R}^2\\
		&(x_i,y_i,z_i)&\longmapsto&(x_i,y_i)
	\end{array}.$$
	In these coordinates the symplectic form $\omega = -(R_1 \omega_{\mathbb{S}^2} + R_2 \omega_{\mathbb{S}^2})$ is
	$$\omega = -\varepsilon_1 R_1\frac{1}{\sqrt{1-x_1^2-y_1^2}}dx_1\wedge dy_1 - \varepsilon_2 R_2\frac{1}{\sqrt{1-x_2^2-y_2^2}}dx_2\wedge dy_2$$
	on $U_1^{\varepsilon_1}\times U_2^{\varepsilon_2}$, and the system writes as:
	\[\left\{ \begin{array}{l}
		L(x_1,y_1,x_2,y_2) = \varepsilon_1 R_1 \sqrt{1 - x_1^2 - y_1^2} + \varepsilon_2 R_2 \sqrt{1 - x_2^2 - y_2^2} \\
		H(x_1,y_1,x_2,y_2) = \varepsilon_1 (1-t) \sqrt{1 - x_1^2 - y_1^2} + t\left( x_1 x_2 + y_1 y_2 + \varepsilon_1 \varepsilon_2 \sqrt{(1 - x_1^2 - y_1^2) (1 - x_2^2 - y_2^2)} \right)
	\end{array} \right. .\]
	
	Observe that this system is an interpolation between a toric system when $t = 0$ and a semitoric system of toric type when $t=1$ (for the exact definition, see the work of V\~{u} Ng\d{o}c in \cite{VuNgoc2007}).
	In the first case $L$ is a coupled rotation and $H$ is a rotation, while in the second $L$ is again a coupled rotation and $H$ represents the angle.
	See the image of the momentum map of the system for different values of $0\leq t \leq 1$ in Figure \ref{fig:cam0}.
	
	\begin{figure}[hbt!]
		\centering
		\includegraphics[scale=0.17]{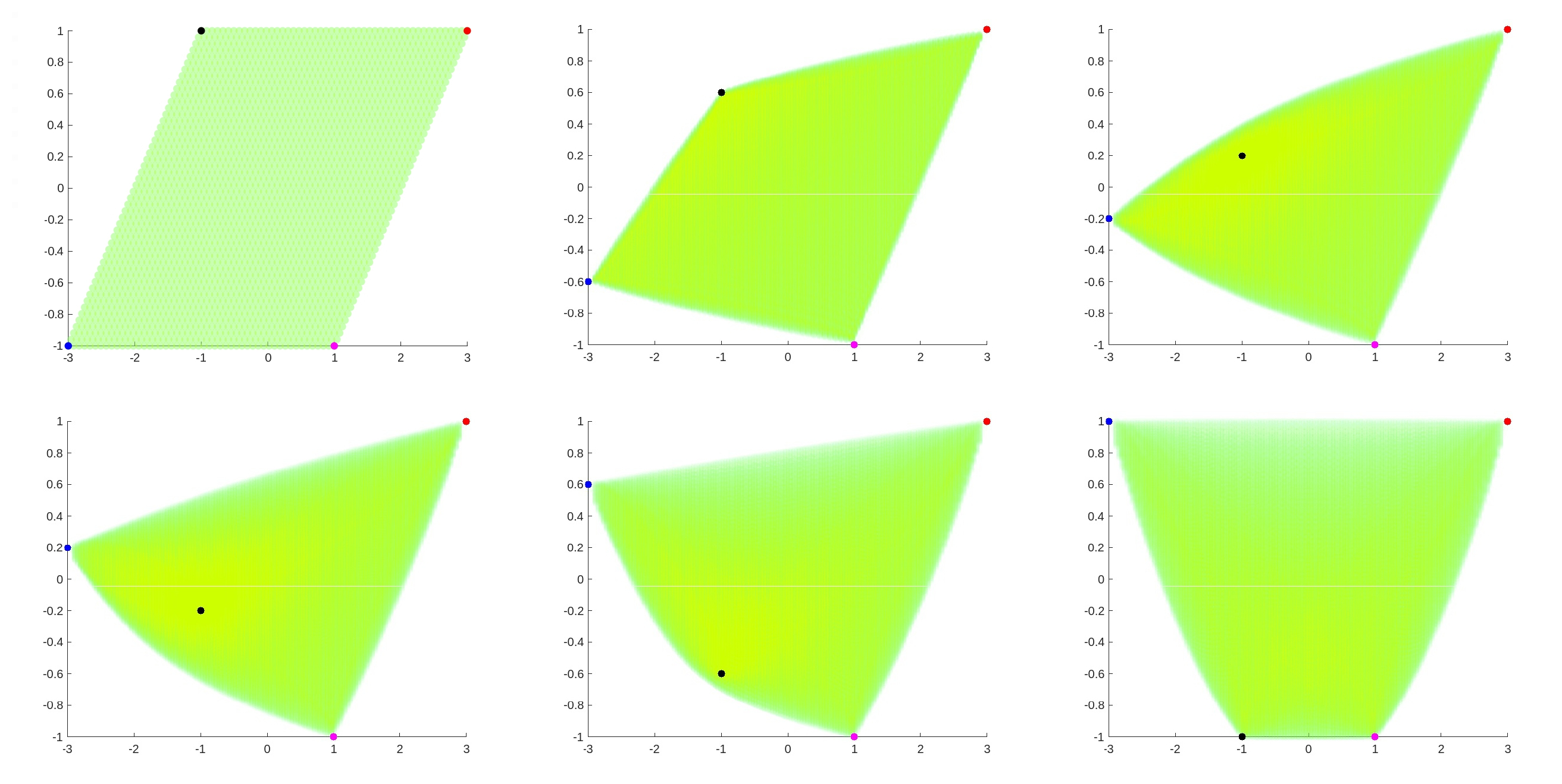}
		\caption{Image of the momentum map of the classical coupled angular momenta for values of $t$ between $0$ (top left) and $1$ (bottom right). The image of the focus-focus singularity is depicted in red.}
		\label{fig:cam0}
	\end{figure}
	
	\subsection{$b$-integrable systems}
	
	Let us start by recalling some definitions from Guillemin, Miranda, and Pires~\cite{GuilleminMirandaPires2014}.
	
	\begin{definition}
		A \emph{$b$-manifold} is a pair $(M,Z)$ where $M$ is an oriented smooth manifold and $Z$ is a closed and embedded submanifold of codimension $1$, commonly called the \emph{singular hypersurface}.
		A \emph{$b$-map} $f : (M,Z) \to (M',Z')$ is an orientation-preserving map $f : M \to M'$ such that $f^{-1}(Z') = Z$ and $f$ is transverse to $Z'$.
		A \emph{defining function} for $Z \subset M$ is a $b$-map $z : (M,Z) \to (\mathbb{R}, \{0\})$.
		
		The \emph{$b$-tangent bundle} of a $b$-manifold is the vector bundle $^bTM \to M$ whose sections are precisely the vector fields which are tangent to $Z$, also called the \emph{$b$-vector fields}.
		Its dual, denoted as $^bT^{\ast}M$, is the \emph{$b$-cotangent bundle}, and the sections of $\bigwedge^k \left({}^{b}T^{\ast}M\right)$ are denoted by $^b \Omega^k(M)$ and called the \emph{$b$-de Rham forms}.
		The restriction of any $b$-de Rham form to $M\setminus Z$ defines a smooth de Rham form there, and the differential $d : \Omega^k(M) \to \Omega^{k+1}(M)$ can be canonically extended to $^b\Omega^{\bullet}(M)$.
		
		A \emph{$b$-symplectic form} is a $b$-de Rham form of degree two $\omega \in {}^b\Omega^2(M)$ such that it is closed and non-degenerate.
	\end{definition}
	
	\begin{theorem}
		\cite[Theorem~27]{GuilleminMirandaPires2014}
		The $b$-cohomology groups of a $b$-manifold $(M,Z)$ are
		\[^bH^{\bullet}(M) \cong H^{\bullet}(M) \oplus H^{\bullet - 1}(Z) .\]
	\end{theorem}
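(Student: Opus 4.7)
The plan is to establish the isomorphism via a short exact sequence of complexes comparing smooth and $b$-de Rham forms, together with an explicit cochain-level splitting. First I would fix a tubular neighborhood $U \cong Z \times (-\epsilon, \epsilon)$ of $Z$ with coordinate $t$ serving as a defining function, and recall that every $b$-form $\omega \in {}^b\Omega^k(U)$ has a canonical decomposition
\[
\omega = \frac{dt}{t} \wedge \pi^* \alpha + \eta,
\]
where $\alpha \in \Omega^{k-1}(Z)$ and $\eta \in \Omega^k(U)$ is smooth. The assignment $\omega \mapsto \alpha$ extends to a globally well-defined residue map $r : {}^b\Omega^\bullet(M) \to \Omega^{\bullet-1}(Z)$ (independence of the defining function follows because any two differ by multiplication by a smooth positive function, which alters only the smooth part of $\omega$). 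A direct computation, using that $d(dt/t) = 0$, shows $r$ is a cochain map, and its kernel is exactly $\Omega^\bullet(M) \subset {}^b\Omega^\bullet(M)$.

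Next I would assemble these facts into the short exact sequence of complexes
\[
0 \longrightarrow \Omega^\bullet(M) \longrightarrow {}^b\Omega^\bullet(M) \stackrel{r}{\longrightarrow} \Omega^{\bullet - 1}(Z) \longrightarrow 0
\]
and extract the associated long exact sequence
\[
\cdots \to H^k(M) \to {}^bH^k(M) \to H^{k-1}(Z) \stackrel{\delta}{\to} H^{k+1}(M) \to \cdots .
\]
The main step is then to produce a cohomological section of $r$, which will force the connecting homomorphism $\delta$ to vanish. For this I would choose a smooth cutoff $\chi$ supported in $U$ and equal to $1$ near $Z$, and for a closed $\alpha \in \Omega^{k-1}(Z)$ set
\[
s(\alpha) := \chi \, \frac{dt}{t} \wedge \pi^* \alpha.
\]
Then $d s(\alpha) = d\chi \wedge \frac{dt}{t} \wedge \pi^*\alpha$, which is supported away from $Z$ and is therefore a \emph{smooth} closed form; subtracting a smooth primitive (which exists locally by a Poincaré-lemma argument along the collar) produces an honest closed $b$-form $\tilde s(\alpha)$ with $r(\tilde s(\alpha)) = \alpha$.

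Finally I would verify that $[\tilde s(\alpha)] \in {}^bH^k(M)$ is independent of the choices of tubular neighborhood, defining function, and cutoff (any two admissible choices differ by an exact $b$-form plus a smooth form, hence define the same cohomology class), so that $s$ descends to a right inverse of $r$ in cohomology. This splits the long exact sequence into short exact sequences, yielding
\[
{}^bH^k(M) \cong H^k(M) \oplus H^{k-1}(Z) .
\]
The subtle point I expect to be the main obstacle is the construction and well-definedness of the splitting $s$: one must correct the naive formula $\chi \frac{dt}{t} \wedge \pi^*\alpha$ to obtain a closed $b$-form and then check that the resulting cohomology class is independent of all auxiliary choices. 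Everything else is essentially formal homological algebra once the residue short exact sequence is in place.
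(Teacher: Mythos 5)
The paper does not prove this statement; it quotes it verbatim from Guillemin--Miranda--Pires (their Theorem~27), so there is no in-paper proof to compare against. Your argument is essentially the standard one from that reference (the $b$-analogue of the Mazzeo--Melrose decomposition): the residue short exact sequence $0 \to \Omega^\bullet(M) \to {}^b\Omega^\bullet(M) \to \Omega^{\bullet-1}(Z) \to 0$ together with a cochain-level splitting, and it is correct. Two small remarks. First, the step you flag as the main obstacle can be made trivial: if you choose the cutoff $\chi$ to be a function of the collar coordinate $t$ alone, then $d\chi \wedge dt = 0$, so $d\bigl(\chi\,\tfrac{dt}{t}\wedge\pi^*\alpha\bigr) = -\chi\,\tfrac{dt}{t}\wedge\pi^*(d\alpha)$ vanishes for closed $\alpha$ and no correction is needed; alternatively, for a general cutoff the explicit smooth primitive $-\log|t|\,d\chi\wedge\pi^*\alpha$ (supported where $t\neq 0$, extended by zero) does the job, so no Poincar\'e-lemma appeal is required. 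Second, as written $r$ anticommutes with $d$ (since $d\bigl(\tfrac{dt}{t}\wedge\pi^*\alpha\bigr) = -\tfrac{dt}{t}\wedge\pi^*(d\alpha)$); either insert a sign in the definition of $r$ or write the singular part as $\pi^*\alpha\wedge\tfrac{dt}{t}$ to get an honest cochain map. With those conventions fixed, the rest is the formal homological algebra you describe.
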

	
	Locally around $p \in Z$ one can think that smooth forms are extended in $^b\Omega^k(M)$ by $b$-forms of the type $\frac{dz}z \wedge \eta$, with $\eta \in \Omega^{k-1}(M)$ and $z$ a local defining function for $Z$.
	In particular, the form $\frac{dz}z$ is always closed and not exact.
	
	
	\begin{definition}
		Let $(M,Z)$ be a $b$-manifold.
		We define the sheaf of $b$-functions $^b\mathcal{C}^{\infty}(M)$ by
		\[^b\mathcal{C}^{\infty}(U) = \left\{ c\log|z| + g \ | \ c \in \mathbb{R}, g \in \mathcal{C}^{\infty}(U), z \in \mathcal{C}^{\infty}(U) \text{ a local defining function of } Z \right\} .\]
		
		\label{def:bfunction}
	\end{definition}
	
	With this setting it is now possible for us to extend Definition \ref{def:integrablesystem} to $b$-manifolds.
	
	\begin{definition}
		\cite[Definition~57]{GuilleminMirandaPires2014}
		A \emph{$b$-integrable system} on a $b$-symplectic manifold $(M,Z,\omega)$ is a tuple $(f_1,..., f_n)$ of $b$-functions such that $\{f_i,f_j\} = 0$ for all $1 \leq i,j \leq n$ and such that $df_1\wedge ...\wedge df_n$ does not vanish (as a form in $^b\Omega^{2n}(M)$) almost everywhere in $M$ and also almost everywhere in $Z$.
		
		\label{def:bintegrablesystems}
	\end{definition}
	
	This definition has a remarkable difference with respect to Definition \ref{def:integrablesystem} besides the fact that we consider a $b$-symplectic form instead of a smooth symplectic form, which is that we require that $df_1,...,df_n$ to be independent almost everywhere on $Z$.
	This is chosen to avoid a situation in which $(f_1,...,f_n)$ reduces to a distribution of rank $2n-2$ on $Z$, which is too restrictive in order to prove normal form theorems (see for example Guillemin, Miranda, Pires and Scott~\cite{GuilleminMirandaPiresScott2015} or Kiesenhofer, Miranda and Scott~\cite{KiesenhoferMirandaScott2016}).
	
	As a consequence of the normal form of such a system (see for instance Kiesenhofer and Miranda~\cite[Remark~18]{KiesenhoferMirandaScott2016}), it is known that
	
	\begin{lemma}
		\label{lemma:nofixedbintegrable}
		
		Given a $b$-integrable system $(M,Z,\omega, F = (f_1,...,f_n))$ with non-degenerate singularities, there exist Eliasson-type normal forms in a neighbourhood of points in $Z$ and the minimal rank of $dF$ for these singularities is $1$ along $Z$.
		
		In particular, $Z$ cannot contain fixed points of the system.
	\end{lemma}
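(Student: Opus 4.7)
The plan is to reduce to the local Eliasson-type normal form for $b$-integrable systems at a point of $Z$ and then read off the rank of $dF$ directly from that model.

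First, I would use the $b$-Darboux theorem to fix local coordinates $(z,\theta,x_1,y_1,\ldots,x_{n-1},y_{n-1})$ near any $p\in Z$, so that $z$ is a local defining function for $Z$ and
$$\omega \;=\; \frac{dz}{z}\wedge d\theta + \sum_{i=1}^{n-1} dx_i \wedge dy_i.$$
In such a chart every $b$-function has the form $c\log|z|+g$ with $g$ smooth, so the $b$-cotangent space $^{b}T^{*}_{p}M$ at points of $Z$ contains the non-vanishing class $\tfrac{dz}{z}$. This sets the stage: showing $dF\neq 0$ on $Z$ amounts to showing that the $\tfrac{dz}{z}$-direction is genuinely seen by $F$.

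Next, I would invoke the Eliasson-type normal form for non-degenerate singularities of $b$-integrable systems established in Kiesenhofer-Miranda-Scott~\cite[Remark~18]{KiesenhoferMirandaScott2016}: after possibly shrinking the neighbourhood and composing $F$ with a local diffeomorphism $g:\mathbb{R}^n\to\mathbb{R}^n$ of the target, one can assume
$$g\circ F \;=\; (q_0,q_1,\ldots,q_{n-1}),$$
where $q_0=\log|z|$ absorbs the singular direction of the $b$-symplectic form and each $q_i$ with $i\geq 1$ is one of the Williamson building blocks (elliptic, hyperbolic, focus-focus pair, or regular) from Theorem~\ref{thm:localnf}, expressed in the symplectic variables $(x_1,y_1,\ldots,x_{n-1},y_{n-1})$. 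This is the only nontrivial input of the argument; the content is that non-degeneracy in the $b$-sense forces the transverse direction $\tfrac{dz}{z}$ to appear as a genuine $b$-component of the system along $Z$, rather than to disappear into the smooth part.

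With this normal form in hand the rank bound is immediate. At every point $p$ with $z=0$ the $b$-one-form $dq_0=\tfrac{dz}{z}$ is non-zero in $^{b}T^{*}_{p}M$, whereas the remaining $dq_i$ for $i\geq 1$ involve only the smooth covectors $dx_j,dy_j$ and therefore cannot cancel this contribution. Hence the collection $\{dq_0(p),\ldots,dq_{n-1}(p)\}$ always spans a subspace of dimension at least $1$, and since $g$ is a local diffeomorphism the same lower bound transfers to $dF$. In particular $dF|_{p}\neq 0$ for every $p\in Z$, so $Z$ contains no fixed points of the system, which is the conclusion of the lemma. The main obstacle is precisely the appeal to the $b$-Eliasson normal form; once that is available the rank count is routine.
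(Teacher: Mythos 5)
Your proposal is correct and follows essentially the same route as the paper, which states this lemma as a direct consequence of the Eliasson-type normal form near $Z$ cited from Kiesenhofer--Miranda--Scott; you simply make explicit the final step that the component $dq_0=\tfrac{dz}{z}$ is a nowhere-vanishing section of $^{b}T^{*}M$ along $Z$, forcing $\operatorname{rank} dF\geq 1$ there. The only caveat is that the entire weight of the argument rests on the cited normal form, exactly as in the paper, so nothing new is required.
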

	
	\subsubsection{$b$-toric manifolds}
	
	The case of $b$-toric manifolds was thoroughly studied by Guillemin, Miranda, Pires and Scott in \cite{GuilleminMirandaPiresScott2015} and also by Gualtieri, Li, Pelayo and Ratiu in \cite{GualtieriLiPelayoRatiu2017}.
	Here we present a summary of the former's results.
	
	We denote by $\mathfrak{t}$ the Lie algebra of the torus $\mathbb{T}^n$ and by $X^{\#} \in \mathfrak{X}(M)$ the fundamental vector field associated to an element $X \in \mathfrak{t}$ by the action.
	
	\begin{definition}
		\cite[Definition~7]{GuilleminMirandaPiresScott2015}
		Let $(M,Z,\omega)$ a $b$-symplectic manifold, and consider a Lie group action by the torus $\mathbb{T}^n$.
		
		We say that it is \emph{Hamiltonian} if for all $X, Y \in \mathfrak{t}$:
		\begin{itemize}
			\item $\iota_{X^{\#}} \omega$ is exact, i.\ e.\, it has a primitive $H_X \in ^{b}\mathcal{C}^{\infty}(M)$.
			\item $\omega(X^{\#}, Y^{\#}) = 0$.
		\end{itemize}
		
		We say that it is \emph{toric} if it is effective and $\mathrm{dim}(\mathbb{T}^n) = \frac12 \mathrm{dim}(M)$.
		\label{def:tnactionb}
	\end{definition}
	
	Through an equivariant version of the $b$-Morse lemma it is possible to show a particularly simple classification of toric Hamiltonian $b$-actions in the particular case of surfaces.
	
	\begin{theorem}
		\cite[Theorem~9]{GuilleminMirandaPiresScott2015}
		A $b$-symplectic surface with a toric $\mathbb{S}^1$-action is equivariantly $b$-symplectomorphic to either $(\mathbb{S}^2,Z)$ or $(\mathbb{T}^2, Z)$, where $Z$ is a collection of latitude circles (in the $\mathbb{T}^2$ case, an even number of such circles), the action is the standard rotation, and the $b$-symplectic form is determined by the modular periods of the critical curves and the regularized Liouville volume.
		
		\label{theorem:classification2btoric}
	\end{theorem}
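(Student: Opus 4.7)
My plan has three stages: classify the underlying smooth manifold, identify the singular hypersurface and the action, and then apply an equivariant $b$-Moser argument to show the listed invariants are complete.

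First, because a $b$-manifold is oriented by definition and $M$ is a compact connected surface admitting an effective $\mathbb{S}^1$-action, the classical classification of circle actions on closed oriented surfaces forces $M \cong \mathbb{S}^2$ or $M \cong \mathbb{T}^2$. Since the action preserves $\omega$ it preserves $Z$, so each connected component of $Z$ is an $\mathbb{S}^1$-invariant embedded circle; under the standard rotation, such an invariant circle is a latitude, and a reparametrization of the circle group arranges the action to be the standard rotation. In the toroidal case there is also a parity constraint: near each component $Z_i$ the $b$-symplectic form has the local expression $c_i \, d\theta \wedge \tfrac{dz_i}{z_i} + (\text{smooth})$ with $c_i \neq 0$, so the smooth form $\omega|_{M\setminus Z}$ reverses orientation when crossing $Z_i$. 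A meridian in $\mathbb{T}^2$ must return to its original orientation, forcing an even number of components of $Z$; no such constraint arises on $\mathbb{S}^2$ since any loop is contractible.

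Second, to extract the invariants I would use the $b$-de Rham decomposition $^bH^2(M) \cong H^2(M) \oplus H^1(Z)$ stated earlier in the excerpt. The modular periods $c_i$ pair with a basis of $H^1(Z)$ via integration over each component, while the regularized Liouville volume of $M \setminus Z$ (in which the logarithmic divergences cancel by a principal-value regularization controlled by the $c_i$) computes the class in $H^2(M)$. Consequently two $\mathbb{S}^1$-invariant $b$-symplectic forms $\omega_0, \omega_1$ with matching invariants satisfy $[\omega_1 - \omega_0] = 0$ in $^bH^2(M)$, hence $\omega_1 - \omega_0 = d\alpha$ for a $b$-one-form $\alpha$ that can be made $\mathbb{S}^1$-invariant by averaging over the compact group. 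A standard Moser interpolation $\omega_t = \omega_0 + t(\omega_1 - \omega_0)$ then yields an equivariant $b$-symplectomorphism.

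The main technical obstacle is the Moser step itself: one must verify that the time-dependent vector field $X_t$ solving $\iota_{X_t}\omega_t = -\alpha$ is a genuine $b$-vector field, i.e.\ tangent to $Z$, so that its flow is defined for all $t \in [0,1]$ and preserves $Z$. This tangency is exactly what the $b$-Darboux/equivariant $b$-Morse lemma provides near each component of $Z$, and it is what justifies regarding the modular periods and the regularized Liouville volume as a complete set of equivariant invariants.
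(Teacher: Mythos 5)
This theorem is recalled in the paper as a cited result of Guillemin--Miranda--Pires--Scott and is not proved here; the only indication of method given is the remark that the classification is obtained through an equivariant version of the $b$-Morse lemma, i.e.\ by putting the $b$-momentum map $\mu = c\log|z|+g$ of the $\mathbb{S}^1$-action into normal form and reading off the surface, the latitude circles and the invariants from its level sets. Your route is genuinely different and is essentially sound: you first pin down the pair (surface, action) via the classical topology of effective circle actions on closed oriented surfaces, deduce the parity of the number of components of $Z$ on $\mathbb{T}^2$ from the alternation of the symplectic orientation of $\omega|_{M\setminus Z}$ across each component (correct: the complement components form a cycle on $\mathbb{T}^2$ and a line on $\mathbb{S}^2$), and then prove completeness of the invariants by the Mazzeo--Melrose decomposition $^bH^2(M)\cong H^2(M)\oplus H^1(Z)$ together with an equivariant Moser deformation --- in effect Radko's classification of stable Poisson surfaces made equivariant. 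The Morse-theoretic route buys the normal form of the action and the structure of $Z$ in one stroke and is the one that scales to the semilocal higher-dimensional analysis GMPS need later; your route makes the role of the two invariants as cohomological coordinates more transparent. One correction to your final step: tangency of $X_t$ to $Z$ is not what the equivariant $b$-Darboux lemma is needed for --- it is automatic, since solving $\iota_{X_t}\omega_t=-\alpha$ with $\alpha\in{}^b\Omega^1(M)$ and $\omega_t$ nondegenerate as a $b$-form produces $X_t=-(\omega_t^\flat)^{-1}(\alpha)$, a section of $^bTM$, hence tangent to $Z$ by definition. What genuinely requires the matching of invariants is that $\omega_t=\omega_0+t(\omega_1-\omega_0)$ remain nondegenerate for all $t\in[0,1]$: near each component of $Z$ this uses equality of the modular periods (so the singular parts do not interpolate through zero), and away from $Z$ it uses that the two area forms induce the same orientation on each component of $M\setminus Z$. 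With that point repaired, and after first normalizing by an equivariant diffeomorphism so that the two structures share the same collection of latitude circles, your argument closes.
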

	
	The study of higher dimensional cases requires first an understanding of the behaviour of the $\mathbb{T}^n$-action semilocally near the hypersurface $Z$.
	To this end, an equivariant Darboux theorem is proved.
	Also, the authors introduce the notion of \emph{modular weight} of a connected component of $Z$.
	
	\begin{definition}
		\cite[Remark~10]{GuilleminMirandaPiresScott2015}
		For each connected component $Z' \subseteq Z$ there is an element $v_{Z'} \in \mathfrak{t}^{\ast} = \mathrm{Hom}(\mathfrak{t}, \mathbb{R})$, the \emph{modular weight} of $Z'$, such that for every $X \in \mathfrak{t}$ the function $H_X$ given by Definition \ref{def:tnactionb} has the form $v_{Z'}(X) \log|z| + g$ in a tubular neighbourhood around $Z'$, where $z$ is a local defining function of $Z'$ and $g \in \mathcal{C}^{\infty}(M)$.
		
		\label{def:modularweight}
	\end{definition}
	
	\begin{remark}
		\cite[Claim~13]{GuilleminMirandaPiresScott2015}
		If the action is toric, then $v_{Z'} \neq 0$.
	\end{remark}
	
	\begin{remark}
		\cite[Corollary~16]{GuilleminMirandaPiresScott2015}
		The hypersurface $Z$ is always a product, $Z \cong \mathcal{L}\times \mathbb{S}^1$.
	\end{remark}
	
	Using these tools it is possible to understand the global behaviour of a $b$-toric Hamiltonian action via an analogue to the Delzant polytope.
	In some sense, we want to understand the image of a momentum map, which as we see in Definition \ref{def:modularweight} is not a smooth function in a neighbourhood of $Z$.
	Away from $Z$, however, the following is true
	
	\begin{remark}
		\cite[Claim~19]{GuilleminMirandaPiresScott2015}
		For each connected component $W \subseteq M\setminus Z$, the image $\left.\mu\right|_W(W)$ is convex.
	\end{remark}
	
	To define globally the image of a $b$-momentum map it is necessary to provide a notion of its codomain.
	
	\begin{definition}
		\cite[Definition~21]{GuilleminMirandaPiresScott2015}
		Let $(M,Z,\omega)$ a $b$-symplectic manifold, and consider a toric Hamiltonian action by $\mathbb{T}^n$ on it.
		The \emph{adjacency graph} $\mathcal{G} = (G, w)$ associated to it consists of the graph $G = (V, E)$ whose vertices $v \in V$ are connected components of $M\setminus Z$ and has an edge between $v$ and $v'$ if there is a connected component of $Z$ that borders both $v$ and $v'$, and $w : E \to \mathfrak{t}^{\ast}$ is the map that associates to each connected component $Z$ its modular weight $v_Z$.
		When the action is effective, the graph $G$ must either be a cycle with an even number of vertices or a line.
	\end{definition}
	
	\begin{definition}
		Consider a pair $\mathcal{G} = (G, w)$ of such a graph and a function $w : E \to \mathfrak{t}^{\ast}$ such that $w(e) = k w(e')$ for $k < 0$ if $e$ and $e'$ meet at a vertex.
		The \emph{$b$-momentum codomain} $(\mathcal{R}_{\mathcal{G}}, \mathcal{Z}_{\mathcal{G}}, \hat{x})$ is a $b$-manifold $(\mathcal{R}_{\mathcal{G}}, \mathcal{Z}_{\mathcal{G}})$ with a smooth map $\hat{x} : \mathcal{R}_{\mathcal G} \setminus \mathcal{Z}_{\mathcal G} \to \mathfrak{t}^{\ast}$.
		A $b$-map $\mu : M \to \mathcal{R}_{\mathcal G}$ is then a \emph{momentum map} if it is $\mathbb{T}^n$ equivariant and $\mathfrak{t} \ni X \mapsto \mu^{X} \in \mathcal{C}^{\infty}(M)$ with $\mu^X(p) = \langle \hat{x} \circ \mu(p), X \rangle$ is linear, and moreover
		\[\iota_{X^{\#}}\omega = d\mu^X .\]
	\end{definition}
	
	For more information on how the codomain is defined see \cite[Section~5]{GuilleminMirandaPiresScott2015}.
	
	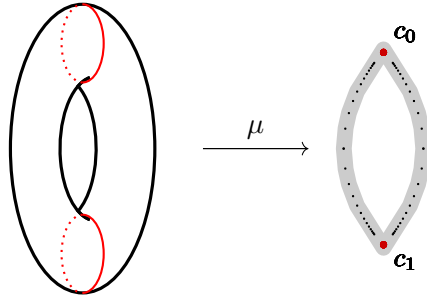
\begin{figure}[ht]
		\begin{tikzpicture}[scale = 0.8]
			
			\pgfmathsetmacro{\sizer}{2}
			\pgfmathsetmacro{\donutcenterx}{0}
			\pgfmathsetmacro{\donutcentery}{0}
			\pgfmathsetmacro{\basept}{5}

			\DrawDonut{\donutcenterx}{\donutcentery}{.6*\sizer}{1.2*\sizer}{-90}{black}{very thick}
			\draw [red, thick] (\donutcenterx, \donutcentery - .55*\sizer) arc (90:-90:0.35 and .65);
			\draw [red, thick, dotted] (\donutcenterx, \donutcentery - .55*\sizer) arc (90:270:0.35 and .65);
			
			\draw [red, thick] (\donutcenterx, \donutcentery + .55*\sizer) arc (-90:90:0.35 and .65);
			\draw [red, thick, dotted] (\donutcenterx, \donutcentery + .55*\sizer) arc (270:90:0.35 and .65);
			
			\foreach \paramt in {-3, -2.75, ..., 0.01}
			{
				\pgfmathsetmacro{\height}{1.5 * exp(\paramt)}
				
				\pgfmathsetmacro{\xshift}{0.25*(\height * (\height - 3)) - 0.1}
				\draw[fill = black] (\basept + \xshift, \height - 1.5) circle(.1mm);
				\draw[fill = black] (\basept - \xshift, \height - 1.5) circle(.1mm);
				\draw[red, fill = red] (\basept, -1.6) circle(.5mm) node[below right, black] {\small{$c_1$}};
				
				\draw[fill = black] (\basept + \xshift, 1.5 - \height) circle(.1mm);
				\draw[fill = black] (\basept - \xshift, 1.5 - \height) circle(.1mm);
				\draw[red, fill = red] (\basept, 1.6) circle(.5mm) node[above right, black] {\small{$c_0$}};
			}

			\draw (2, 0) edge node[above] {$\mu$} (3.75, 0);
			\draw[->] (2, 0) -- (3.75, 0);

			\draw[line width = 6pt, join = round, opacity=0.2] (\basept - .66, 0) -- (\basept - .63,  1.16 - 1.5) -- (\basept - .575,  .91 - 1.5) -- (\basept - .5, .71-1.5) -- (\basept - .44, .55-1.5) -- (\basept, -1.65) -- (\basept + .44, .55-1.5) -- (\basept + .5, .71-1.5) -- (\basept + .575,  .91 - 1.5) -- (\basept + .63,  1.16 - 1.5) -- (\basept + .66, 0) -- (\basept + .63,  1.5 - 1.16) -- (\basept + .575,  1.5 - .91) -- (\basept + .5, 1.5 - .71) -- (\basept + .44, 1.5 - .55) -- (\basept, 1.65) -- (\basept - .44, 1.5 - .55) -- (\basept - .5, 1.5-.71) -- (\basept - .575,  1.5-.91) -- (\basept - .63,  1.5-1.16) -- cycle;

		\end{tikzpicture}
		\caption{The moment map $\mu: \mathbb{T}^2 \rightarrow \mathcal{R}_{\mathcal{G}}$.}
		\label{fig:momentmapT2}
	\end{figure}
	
	\begin{definition}
		\cite[Definition~28]{GuilleminMirandaPiresScott2015}
		A \emph{$b$-symplectic toric manifold} is $(M^{2n}, Z, \omega, \mu : M \to \mathcal{R}_{\mathcal G})$, where $(M, Z, \omega)$ is $b$-symplectic and $\mu$ is a momentum map for some $b$-toric action on $(M,Z,\omega)$.
	\end{definition}
	
	\begin{definition}
		\cite[Definitions~30~and~32]{GuilleminMirandaPiresScott2015}
		A \emph{$b$-polytope} in $\mathcal{R}_{\mathcal G}$ is a bounded subset $P$ that intersects every component of $\mathcal{Z}_{\mathcal G}$ and can be expressed as a finite intersection of half-spaces.
		
		Such a polytope is \emph{Delzant} if
		\begin{itemize}
			\item In the case that $G$ is a line, if for every vertex $v \in P$ there is a lattice basis $\{u_i\}$ of $\mathfrak{t}^{\ast}$ such that the edges incident to $v$ can be written in a neighbourhood of $v$ as $v + tu_i$ for $t \geq 0$.
			\item In the case that $G$ is a cycle, if $\Delta_Z \subseteq \mathfrak{t}^{\ast}_{w}$ is Delzant.
		\end{itemize}
	\end{definition}
	
	With this we have all the notions required to establish a classification:
	
	\begin{theorem}
		\cite[Theorem~35]{GuilleminMirandaPiresScott2015}
		The map
		\[\left\{ \begin{array}{c} \text{$b$-symplectic toric manifolds} \\ (M, Z, \omega, \mu : M \to \mathcal{R}_{\mathcal G}) \end{array} \right\} \to \left\{ \begin{array}{c}  \text{Delzant $b$-polytopes} \\ \text{in $\mathcal{R}_{\mathcal G}$} \end{array} \right\} \]
		that sends a $b$-symplectic toric manifold to the image of its momentum map is a bijection, where $b$-symplectic toric manifolds are considered up to equivariant $b$-symplectomorphisms that preserve the momentum map.
		
		\label{theorem:bdelzant}
	\end{theorem}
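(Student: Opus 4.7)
The plan is to mirror Delzant's classical strategy while handling the new features imposed by the singular hypersurface $Z$. The proof naturally splits into three parts: (i) the map is well-defined, i.e., the image $\mu(M)$ is a Delzant $b$-polytope in $\mathcal{R}_{\mathcal G}$; (ii) injectivity, i.e., two $b$-symplectic toric manifolds with the same image are equivariantly $b$-symplectomorphic; and (iii) surjectivity, i.e., every Delzant $b$-polytope arises from such a manifold.

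For well-definedness I would combine the convexity statement noted just before Definition~30 (on each connected component $W\subseteq M\setminus Z$ the image $\mu|_W(W)$ is convex) with the local description of $\mu$ near $Z$: by Definition~\ref{def:modularweight} the momentum map behaves like $v_{Z'}\log|z|+g$ in a tubular neighbourhood of a component $Z'\subseteq Z$, so $\mu$ extends to a $b$-map into $\mathcal{R}_{\mathcal G}$ and the images of the components of $M\setminus Z$ glue along the components of $\mathcal{Z}_{\mathcal G}$ exactly as prescribed by the adjacency graph $G$. The Delzant condition at each interior vertex of the resulting polytope follows from an equivariant Darboux theorem (each fixed point has a standard local toric model), while the condition along components of $\mathcal{Z}_{\mathcal G}$ is a direct consequence of the semilocal equivariant $b$-Darboux theorem used in the proof of Theorem~\ref{theorem:classification2btoric}.

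For injectivity, given two $b$-symplectic toric manifolds $(M,Z,\omega,\mu)$ and $(M',Z',\omega',\mu')$ with the same image $P$, I would build an equivariant $b$-symplectomorphism by a sheaf-theoretic gluing exactly as in Delzant's proof: over principal orbits the identification is forced by equivariance; over fixed points and over components of $Z$ the local models are completely determined by the Delzant condition at vertices and by the modular weights, respectively; the fact that transition data between these charts lie in the connected group $T^n$ allows one to patch them into a global equivariant $b$-symplectomorphism preserving momentum maps. For surjectivity I would cut the given Delzant $b$-polytope $P$ along the components of $\mathcal{Z}_{\mathcal G}$ into the finite collection of ordinary Delzant polytopes $\{P_v\}_{v\in V(G)}$; Delzant's theorem produces a symplectic toric manifold $M_v$ for each $P_v$, and the modular weights $w(e)$ prescribe both the identification of the boundary faces corresponding to $e$ and the coefficient of $dz/z$ in the local model of the glued $b$-symplectic form. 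One treats the two cases $G=$ line (open chain of pieces) and $G=$ cycle (cyclic gluing, forcing an even number of vertices) separately.

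The main obstacle will be the gluing step in surjectivity: one must check that the assembled form is a genuine $b$-symplectic form (closed and non-degenerate as an element of ${}^b\Omega^2(M)$) and that the resulting $T^n$-action is Hamiltonian with the prescribed $b$-momentum map. Both requirements reduce to a compatibility condition between the modular weights at the two sides of each glued hypersurface, which is precisely the condition $w(e)=k\,w(e')$ for some $k<0$ built into the definition of the $b$-momentum codomain; the sign forces orientations and defining functions of $Z$ to match across the cut and is what selects the line/cycle dichotomy in the combinatorial data. Once this compatibility is checked, the Hamiltonian property of the action and the identification $\mu(M)=P$ follow immediately from the local toric models on the pieces.
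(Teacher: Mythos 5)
This theorem is quoted in the paper as \cite[Theorem~35]{GuilleminMirandaPiresScott2015}; the paper itself gives no proof, so your proposal can only be measured against the argument in that reference. Your three-part Delzant-style skeleton (well-definedness, injectivity, surjectivity) matches the overall shape of the argument there, and the ingredients you invoke for the first two parts --- convexity of $\mu|_W(W)$ on each component $W$ of $M\setminus Z$, the local form $v_{Z'}\log|z|+g$ of the momentum map near $Z$, equivariant Darboux models at fixed points, a semilocal equivariant model near $Z$, and a sheaf-theoretic patching of local identifications --- are essentially the right ones.

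The surjectivity step, however, has a genuine gap as written. Cutting the Delzant $b$-polytope $P$ along the components of $\mathcal{Z}_{\mathcal G}$ does \emph{not} produce ordinary Delzant polytopes: although $P$ is bounded in $\mathcal{R}_{\mathcal G}$, its image under $\hat x$ on each component of $\mathcal{R}_{\mathcal G}\setminus\mathcal{Z}_{\mathcal G}$ is unbounded in the direction of the modular weight, since the momentum map diverges logarithmically as one approaches $Z$. The pieces $P_v$ are therefore non-compact polyhedra to which Delzant's classical existence theorem does not apply, and the corresponding components of $M\setminus Z$ are open symplectic manifolds, so the sentence ``Delzant's theorem produces a symplectic toric manifold $M_v$ for each $P_v$'' does not go through. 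The construction in the cited reference instead exploits the product structure $Z\cong\mathcal L\times\mathbb{S}^1$ and the fact that the modular direction is a free direction of the polytope near $\mathcal{Z}_{\mathcal G}$: the manifold is assembled as a product of a $b$-symplectic surface ($\mathbb{S}^2$ or $\mathbb{T}^2$, classified by Theorem \ref{theorem:classification2btoric}) with a compact smooth toric manifold, followed by symplectic cuts performed away from the critical hypersurface --- exactly the structure recorded in the Corollary that follows the theorem in this paper. Your compatibility condition $w(e)=k\,w(e')$ with $k<0$ is necessary, but by itself it does not guarantee that two unbounded toric pieces glue to a compact $b$-symplectic manifold; the product-plus-symplectic-cut normal form is what actually closes that step.
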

	
	Theorem \ref{theorem:bdelzant} induces a particularly rigid classification of $b$-toric manifolds:
	
	\begin{corollary}
		\cite[Remark~38]{GuilleminMirandaPiresScott2015}
		Every $b$-toric manifold $b$-symplectomorphic to either
		\begin{itemize}
			\item A product of a $b$-symplectic $\mathbb{T}^2$ with a smooth toric manifold, or
			\item A manifold obtained from a product of a $b$-symplectic $\mathbb{S}^2$ with a smooth toric manifold by a sequence of symplectic cuts performed at the north and south “polar caps”, away from the critical hypersurface $Z$.
		\end{itemize}
	\end{corollary}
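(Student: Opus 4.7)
The plan is to obtain this classification as a direct consequence of the $b$-Delzant Theorem (Theorem~\ref{theorem:bdelzant}), reducing the geometric problem to an analysis of the combinatorics of the adjacency graph $\mathcal{G}$ of the action. Since the action is effective and toric, the remark following Definition~32 (included in the excerpt) forces $G$ to be either a cycle with an even number of vertices or a line. I would separate the proof into these two mutually exclusive cases, which correspond bijectively to the two items in the statement.

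For the cycle case, the compatibility condition $w(e) = k w(e')$ with $k<0$ at each vertex, applied around the cycle, implies that all modular weights are collinear in $\mathfrak{t}^{\ast}$. I would pick a primitive lattice vector $X\in \mathfrak{t}$ dual to this common direction; then the $\mathbb{S}^1$-subaction it generates is $b$-Hamiltonian with a non-smooth $b$-primitive (of the form $c\log|z|+g$ near each component of $Z$, as in Definition~\ref{def:modularweight}), while any complementary subtorus acts smoothly. Using the equivariant $b$-Darboux theorem of \cite{GuilleminMirandaPiresScott2015} together with the classification of $b$-symplectic toric surfaces (Theorem~\ref{theorem:classification2btoric}), the $X$-direction carves out a $b$-symplectic $\mathbb{T}^2$-factor, and Delzant's theorem (Theorem~\ref{thm:Delzant}) applied to the transverse, smooth Delzant polytope produces a smooth toric manifold $N$ yielding the factorization $M \cong \mathbb{T}^2 \times N$.

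For the line case the argument is parallel, but with an $\mathbb{S}^2$-factor replacing the $\mathbb{T}^2$-factor. Here the two endpoints of $G$ correspond to bounded "tip" components of $\mathcal{R}_{\mathcal{G}}\setminus \mathcal{Z}_{\mathcal{G}}$ whose defining halfspaces are precisely the images of symplectic cuts in the manifold. Extending the Delzant $b$-polytope by removing these two capping halfspace conditions yields a valid Delzant $b$-polytope whose associated $b$-toric manifold, by Theorem~\ref{theorem:bdelzant}, must be a product $\mathbb{S}^2 \times N'$ for a smooth toric $N'$ built again from the transverse polytope via Theorem~\ref{thm:Delzant}. Performing the equivariant symplectic cuts corresponding to the removed halfspaces recovers $M$, and by construction these cuts are supported near the smooth poles and therefore do not meet $Z$.

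The main obstacle I anticipate is the global splitting step: the equivariant $b$-Darboux theorem gives only a semilocal product structure near $Z$, and one needs to promote this to a global $b$-symplectomorphism. This requires verifying that the chosen lattice splitting $\mathfrak{t} = \langle X\rangle \oplus \mathfrak{t}'$ is compatible with the modular weights on every component of $Z$ simultaneously, and, in the line case, that the caps can be excised equivariantly without introducing new singular behaviour on $Z$. Both points rely crucially on the rigidity imposed by the alternating-sign condition on modular weights across the edges of $\mathcal{G}$.
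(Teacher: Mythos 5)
Your argument is correct and follows essentially the route the paper (implicitly) takes: the statement is quoted from \cite[Remark~38]{GuilleminMirandaPiresScott2015} with no proof beyond the observation that it is induced by the $b$-Delzant correspondence of Theorem~\ref{theorem:bdelzant}, and your cycle/line dichotomy of the adjacency graph, with collinear modular weights giving the $\mathbb{T}^2$-factor and the capping half-spaces giving the symplectic cuts, is exactly how that remark is obtained there. The global-splitting worry you raise at the end dissolves if you run both cases entirely at the level of Delzant $b$-polytopes, where the product structure (and, in the line case, the identification of the removed half-spaces with equivariant cuts supported in the polar caps away from $Z$) is manifest, and the bijection of Theorem~\ref{theorem:bdelzant} transports it back to the manifold up to equivariant $b$-symplectomorphism.
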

	
	\subsubsection{$b$-integrable systems with focus-focus singularities}
	
	
	In dimension $4$, toric systems are generalized to semitoric systems by allowing focus-focus singularities. The purpose of this article is to generalize $b$-toric systems in dimension $4$ into $b$-semitoric systems in the same way, i.e., allowing focus-focus singularities.
	
	The first examples of $b$-integrable systems admitting singular points with not only elliptic but also focus-focus components were developed by Kiesenhofer and Miranda~\cite{KiesenhoferMiranda17} in $6$-dimensional manifolds. In these examples, the singularities with a focus-focus component are located at the critical set of the $b$-symplectic structure and are obtained as a $b$-cotangent lift (see Kiesenhofer and Miranda~\cite{KiesenhoferMiranda17} for more details).
	
	\begin{example}
		Consider the group $G:= S^1 \times \mathbb{R}^+ \times S^1$ acting on $M:=S^1 \times \mathbb{R}^2$  in the following way:
		$$(\varphi, a, \alpha)\cdot (\theta, x_1,x_2):=(\theta + \varphi, a R_\alpha (x_1, x_2)),$$
		where $R_\alpha$ is the matrix corresponding to a rotation by an angle $\alpha$ in the $(x_1,x_2)$ plane.
		
		The twisted $b$-cotangent lift of this action induces a $b$-integrable system in the $b$-symplectic manifold $(T^*M,\omega=\frac{1}{p}dp\wedge d\theta + dy_1 \wedge dx_1 + dy_2 \wedge dx_2)$ with momentum map $F=(f_1,f_2,f_3)$ with:
		
		\begin{align*}
			f_1 &= \langle \lambda, X_1^\# \rangle =  \log|p|, \\
			f_2 &= \langle \lambda, X_2^\# \rangle =  x_1 y_1 + x_2 y_2, \\
			f_3 &= \langle \lambda, X_3^\# \rangle =  x_1 y_2 -y_1 x_2.
		\end{align*}
		The $f_2$ and $f_3$ components generate a family of singular points with a focus-focus component which are found at $x_1=x_2=y_1=y_2=0$.
	\end{example}
	
	\begin{definition}
		Let $(f_1,f_2,f_3)$ be a $b$-integrable system, and let $p\in M$ be a point where the system is singular. We say that the singularity is of \textbf{focus-focus type} if there is a local chart $(t,z, x_1, y_1,  x_2, y_2)$ centered at $p$ such that the critical hypersurface of $\omega$ is locally around $p$ given by $t=0$ and the integrable system is given by
		$$f_1 = c \log|t|, \quad  f_2 = x_1 y_1 + x_2 y_2,\quad f_3 = x_1 y_2 - y_1 x_2.$$
	\end{definition}
	
	
	In this article, we construct examples of $4$-dimensional $b$-integrable systems admitting focus-focus singularities. In Section \ref{sec:bsemitoric} we prove that these type of singular points can not be located at the critical set $Z$ where the $b$-symplectic form is singular. Then, in $4$-dimensional $b$-integrable systems, focus-focus singularities are only be found in $M\setminus Z$, which is an open symplectic manifold and we can apply the results on semitoric systems there.
	
	\section{$b$-semitoric systems}
	
	\label{sec:bsemitoric}
	
	
	\begin{definition}
		A $4$-dimensional $b$-integrable system $(M,Z,\omega, (L,H))$ is \emph{$b$-semitoric} if $L$ is proper and generates an effective $S^1$-action on $M$ and all singular points of $F = (L,H)$ are non-degenerate and do not include hyperbolic components.
		
		\label{def:bsemitoricsystems}
	\end{definition}
	
	As we have seen in Lemma \ref{lemma:nofixedbintegrable}, such a system cannot have fixed points in the critical hypersurface $Z$.
	There is, however, an additional perspective that can be pursued to recover the same result.
	
	\begin{proposition}
		\label{propo:nofixedbsemitoric}
		
		Let $(M,Z,\omega, (L,H))$ a $b$-semitoric system.
		Then the rank of $(dL, dH)$ at a point $p \in Z$ must be at least $1$.
	\end{proposition}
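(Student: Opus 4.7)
The plan is to argue by contradiction. I would assume some $p \in Z$ satisfies $\mathrm{rank}(dL,dH)(p) = 0$ as $b$-forms and derive an obstruction on the linearizations of $X_L$ and $X_H$ at $p$ that is incompatible with the non-degeneracy hypothesis built into the definition of a $b$-semitoric system.

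The first step is a reduction to the case where $L$ and $H$ are smooth near $p$. Any $b$-function has the local form $c\log|z| + g$ near $p$, where $z$ is a defining function of $Z$ and $g$ is smooth, so its $b$-differential reads
\begin{equation*}
(c + z\,\partial_z g)\tfrac{dz}{z} + \partial_\theta g\, d\theta + \partial_{\xi_1}g\, d\xi_1 + \partial_{\xi_2}g\, d\xi_2,
\end{equation*}
whose $\tfrac{dz}{z}$-coefficient specializes at $p \in Z$ to the constant $c$. The assumption $dL(p) = dH(p) = 0$ as $b$-forms therefore forces both logarithmic coefficients to vanish, so that $L$ and $H$ are smooth near $p$; simultaneously, the vanishing of the remaining coefficients yields $\partial_\theta L(p) = \partial_{\xi_1}L(p) = \partial_{\xi_2}L(p) = 0$ and the analogous identities for $H$ (though $\partial_z L(p)$ and $\partial_z H(p)$ need not vanish).

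The core of the argument is a local calculation in $b$-Darboux coordinates $(z, \theta, \xi_1, \xi_2)$ around $p$ with $\omega = \tfrac{dz}{z} \wedge d\theta + d\xi_1 \wedge d\xi_2$. Inverting $\omega$ yields
\begin{equation*}
X_f = z\,\partial_\theta f\,\partial_z - z\,\partial_z f\,\partial_\theta + \partial_{\xi_2}f\,\partial_{\xi_1} - \partial_{\xi_1}f\,\partial_{\xi_2}
\end{equation*}
for every smooth $f$. The $\partial_z$-component $z\,\partial_\theta f$ carries an explicit factor of $z$, and its linearization at $p$ is the row $(\partial_\theta f(p), 0, 0, 0)$. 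Since $\partial_\theta L(p) = 0$ and $\partial_\theta H(p) = 0$ by the previous step, the first row of both $A_L$ and $A_H$ is identically zero. Consequently $\mathrm{Im}(A_L)$ and $\mathrm{Im}(A_H)$ are contained in $T_pZ = \mathrm{span}(\partial_\theta, \partial_{\xi_1}, \partial_{\xi_2})$, and the same holds for every linear combination of $A_L$ and $A_H$, so every such combination has rank at most three.

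To conclude, I would combine this rank bound with non-degeneracy. The $b$-semitoric hypothesis makes $\mathrm{span}\{A_L, A_H\}$ a Cartan subalgebra of $\mathfrak{sp}(4,\mathbb{R})$ containing a regular element whose four eigenvalues are distinct; inspection of the four Williamson--Cartan models in \eqref{eq:classification_cartan} shows that such regular elements always have nonzero eigenvalues and are therefore invertible with $\mathrm{rank} = 4$. Since no element of $\mathrm{span}\{A_L, A_H\}$ can reach rank four, this is the desired contradiction. The main subtlety will be verifying that the first row of $A_f$ really is killed under the hypothesis $df(p) = 0$ as a $b$-form, which hinges on the $z$-factor appearing in the $\partial_z$-component of the $b$-Hamiltonian formula; once that structural observation is secured, the remainder is linear algebra combined with the Williamson classification.
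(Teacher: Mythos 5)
Your argument is correct, but it reaches the contradiction by a genuinely different route than the paper. The paper also linearizes $X_L$ and $X_H$ at $p$ and uses non-degeneracy to identify $K(L,H)=\mathrm{span}\{A_L,A_H\}$ with a Cartan subalgebra of $\mathfrak{sp}(4,\mathbb{R})$, but it only exploits the weaker fact that each $A$ \emph{preserves} the hyperplane $T_pZ$ (because $X_L$ and $X_H$ are tangent to $Z$); it then checks that, among the four Williamson models in \eqref{eq:classification_cartan}, only those with a hyperbolic block admit a $3$-dimensional invariant subspace, and closes the argument with the no-hyperbolic-components clause of Definition \ref{def:bsemitoricsystems}. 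Your computation in $b$-Darboux coordinates proves the stronger statement that the entire \emph{image} of every element of $K(L,H)$ lies inside $T_pZ$, so every element has rank at most $3$, which collides with the invertibility of a regular element (four distinct eigenvalues, necessarily all nonzero by the $\lambda\mapsto-\lambda$ symmetry of symplectic spectra) of any Cartan subalgebra. What your version buys is that the hypothesis excluding hyperbolic components is never used: you in fact reprove, for dimension $4$, the statement of Lemma \ref{lemma:nofixedbintegrable} for arbitrary $b$-integrable systems with non-degenerate singularities, whereas the paper's proof of Proposition \ref{propo:nofixedbsemitoric} genuinely needs the semitoric assumption. The price is the preliminary reduction (vanishing of the logarithmic coefficients and of $\partial_\theta L(p)$, $\partial_\theta H(p)$) and the first-row computation for $A_f$, both of which you carry out correctly. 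One caveat you share with the paper: at $p\in Z$ the $b$-symplectic form is singular on $T_pM$, so the claim that $K(L,H)$ sits inside $\mathfrak{sp}(4,\mathbb{R})$ and the precise meaning of non-degeneracy there merit a sentence of justification; your proof only needs that non-degeneracy furnishes an element of $\mathrm{span}\{A_L,A_H\}$ with four distinct eigenvalues conforming to one of the models in \eqref{eq:classification_cartan}, which is exactly the input the paper assumes as well.
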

	
	\begin{proof}
		This proof is based on the idea laid out by Kiesenhofer and Miranda in \cite[Remark~33]{KiesenhoferMiranda17}
		We follow the notation from Bolsinov and Fomenko~\cite[Section 1.8]{BolsinovFomenko04}.
		
		Consider a point $p \in Z$ such that $\left.dL\right|_p = \left.dH\right|_p = 0$.
		Then, the linearization of the actions of the flows $\varphi_{X_L}^t$ and $\varphi_{X_H}^t$ generates an $\mathbb{R}^2$ action on $T_pM$ which by construction preserves the $b$-symplectic form $\omega$.
		This means that it induces a dimension 2 commutative Lie group $G(L,H) \subset \mathrm{Sp}(4,\mathbb{R})$, from which we can derive a commutative Lie subalgebra $K(L,H) \subset \mathfrak{sp}(4, \mathbb{R})$.
		
		However, the vector fields $X_L$ and $X_H$ are tangent to $Z$ at every point, and therefore $G(L,H)$ must preserve $T_p Z \subset T_p M$, a 3-dimensional subspace.
		Thus, the Lie algebra $K(L,H)$ must preserve $T_p Z$ as well.
		
		As the point $p$ is non-degenerate, we know that $K(L,H)$ must be a Cartan subalgebra, and therefore it must be conjugate to one of the matrix subalgebras from Equation \ref{eq:classification_cartan}.
		Of these algebras, only the ones that have hyperbolic components can possibly leave a 3-dimensional subspace invariant, which cannot be present in a $b$-semitoric system.
		Therefore, we conclude that the point $p$ must be degenerate, which also contradicts our hypothesis, so there cannot be such a point $p \in Z$.
	\end{proof}
	
	\begin{remark}
		In the proof of Proposition \ref{propo:nofixedbsemitoric} we did not make use of the condition that $(df_1, ..., df_n)$ has maximal rank almost everywhere in $Z$.
		This opens the door to the study of systems that are integrable in the sense of Definition \ref{def:integrablesystem} on $b$-symplectic manifolds and have hyperbolic singularities on $Z$.
	\end{remark}
	
	There are already examples proposed for the case of singularities of rank $1$ in Kiesenhofer and Miranda~\cite{KiesenhoferMiranda17}:
	
	\begin{example}
		Consider the group $G := S^1\times \mathbb{R}^{+}$ acting on $M := S^1\times \mathbb{R}$ in the following way:
		\[(\varphi, g) \cdot (\theta, x) := (\theta + \varphi, gx),\]
		i.e. on the $S^1$ component we have rotations and on the $R$ component we have multiplications.
		Then the Lie algebra basis $\left( \frac{\partial}{\partial \theta}, \frac{\partial}{\partial g} \right)$ induces the following fundamental vector fields on $M$:
		\[X_1 := \frac{\partial}{\partial \theta} , \ X_2 := x \frac{\partial}{\partial x} .\]
		We consider the twisted $b$-cotangent lift on $T^{\ast}M$, i.e.\ the $b$-symplectic structure $\omega = - d\lambda$ where
		\[\lambda := \log |p| d\theta + ydx\]
		and $(\theta, p, x, y)$ are the standard coordinates on $T^{\ast}M$.
		The lifted action on $T^{\ast}M$ is $b$-Hamiltonian with momentum map given by $\mu := (f_1, f_2)$:
		\[ \begin{array}{c} f_1 = \langle \lambda , X_1^{\#} \rangle = \log |p| , \\ f_2 = \langle \lambda , X_2^{\#} \rangle = xy .\end{array} \]
		The singularity point at $x=y=0$ of this $b$-integrable system is has a hyperbolic component.
	\end{example}
	
	
	
	
	
	
	With these phenomena in mind it makes sense to study $b$-semitoric systems where focus-focus singularities are present and away from the critical set $Z$.
	
	\section{The $b$-coupled spin-oscillator}
	\label{section:spin-oscillator}
	
	In this section we construct a $4$-dimensional $b$-integrable system with a non-degenerate singularity of focus-focus type away from the hypersurface $Z$ where the $b$-symplectic form is singular.
	This example arises from modifying the coupled spin-oscillator.
	
	\subsection{The $b$-coupled spin-oscillator}
	
	In this section we define the $b$-coupled spin-oscillator.
	It is built from the original coupled spin-oscillator (see Definition \ref{def:coupledspin-oscillator}) applying the change $z\mapsto\log\lvert z\rvert$ to both the symplectic form $\omega$ and the function $L$.
	
	Let $\rho_1,\rho_2>0$ be positive constants. Consider the $b$-manifold
	\[(M=\mathbb{S}^2\times\mathbb{R}^2, \ Z=\{(x,y,z)\in\mathbb{S}^2\mid z=0\}\times\mathbb{R}^2).\]
	Consider the $b$-symplectic form $\omega = -\rho_1\, \omega^b_{\mathbb{S}^2} + \rho_2\, \omega_{\mathbb{R}^2}$ on $M$, where $\omega^b_{\mathbb{S}^2}$ is the standard $b$-symplectic form on $(\mathbb{S}^2,Z=\{(x,y,z)\in\mathbb{S}^2\mid z=0\})$ and $\omega_{\mathbb{R}^2}$ is the standard symplectic form on $\mathbb{R}^2$.
	
	Recall from Definition \ref{def:coupledspin-oscillator} that we have coordinate charts on $U^{\pm}$ and on $U^0$.
	On $U^0$ and in coordinates $(\theta,z,u,v)$, the $b$-symplectic form can be written as
	$$\omega = -\rho_1\, d\theta \wedge \frac{dz}{z} + \rho_2\,du \wedge dv,$$
	while on $U^{\pm}$ and in coordinates $(x,y,u,v)$, it can be written as
	$$\omega = -\rho_1\frac{1}{1-x^2-y^2}dx\wedge dy + \rho_2\,du \wedge dv.$$
	
	\begin{definition}
		The \emph{$b$-coupled spin-oscillator} is a $4$-dimensional Hamiltonian integrable system $(M,\omega,F=(L,H))$, where
		\begin{equation}
			\begin{cases}
				L(x,y,z,u,v) = \rho_1 \log\lvert z \rvert + \frac{\rho_2}{2}\left(u^2+v^2\right)\\
				H(x,y,z,u,v) = \frac{1}{2}\left(xu+yv\right)
			\end{cases}.
			\label{eq:bcso}
		\end{equation}
		\label{def:bcoupledspin-oscillator}
	\end{definition}

	\begin{lemma}
		The $b$-coupled spin-oscillator is a $b$-integrable system.
		\label{lem:bcspintsys}
	\end{lemma}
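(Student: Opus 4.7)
The plan is to verify the three conditions imposed by Definition \ref{def:bintegrablesystems}: that $L$ and $H$ are $b$-functions, that they Poisson-commute, and that $dL \wedge dH$ is non-vanishing almost everywhere on $M$ and almost everywhere on $Z$. The first condition is immediate: on the chart $U^0\times\mathbb{R}^2$ the coordinate $z$ is a local defining function for $Z$, so $L = \rho_1 \log|z| + \frac{\rho_2}{2}(u^2+v^2)$ is of the shape $c\log|z|+g$ from Definition \ref{def:bfunction}; on $U^\pm\times\mathbb{R}^2$ the hypersurface $Z$ is not met and $L$ is smooth, hence a $b$-function trivially. The function $H = (xu+yv)/2$ is globally smooth and therefore a $b$-function as well.

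Next I would compute the $b$-Hamiltonian vector field of $L$ in the cylindrical chart on $M^0=U^0\times\mathbb{R}^2$, where $\omega = -\rho_1\, d\theta\wedge\frac{dz}{z} + \rho_2\, du\wedge dv$. Writing $dL = \rho_1 \frac{dz}{z} + \rho_2(u\,du+v\,dv)$ and solving $\iota_{X_L}\omega = dL$ yields the smooth vector field
\[
X_L = -\partial_\theta + v\,\partial_u - u\,\partial_v,
\]
which is a genuine $b$-vector field and, crucially, does not blow up across $Z$ because the $\frac{dz}{z}$ component of $dL$ has constant coefficient. With this in hand, the involution condition is verified by a direct computation of $\{L,H\}=X_L(H)$ using the cylindrical expression $H = \tfrac{1}{2}\sqrt{1-z^2}(u\cos\theta + v\sin\theta)$, where the angular derivative exactly cancels the $(u,v)$-derivative contributions. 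Away from $Z$ the same identity can also be checked in the Cartesian chart on $M^\pm$, where the $b$-symplectic form is smooth and the calculation reduces to a standard Poisson bracket; this provides a consistency check but is not strictly necessary once $\{L,H\}=0$ holds on the open dense set $M^0$.

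Finally, I would verify that $dL \wedge dH$ has maximal rank almost everywhere. Away from $Z$ this is essentially the same computation as for the classical coupled spin-oscillator, up to replacing the $dz$ component of $dL$ by $\frac{dz}{z}$ times a nonzero constant, so the vanishing locus is a codimension $\geq 1$ analytic subset. On $Z$ itself, in cylindrical coordinates one has $dL|_{z=0} = \rho_1\frac{dz}{z} + \rho_2(u\,du+v\,dv)$ and $dH|_{z=0} = \tfrac{1}{2}\bigl((-u\sin\theta+v\cos\theta)\,d\theta + \cos\theta\,du + \sin\theta\,dv\bigr)$ (the $\frac{dz}{z}$ coefficient of $dH$ vanishes on $Z$ as $z^2/\sqrt{1-z^2}$), so $dL\wedge dH$ contains the term $\tfrac{\rho_1}{2}(-u\sin\theta+v\cos\theta)\,\tfrac{dz}{z}\wedge d\theta$, which is nonzero on the complement of the proper subvariety $\{u\sin\theta=v\cos\theta\}$ of $Z$.

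The only genuinely delicate point, and the part I would spend the most care on, is ensuring that the $b$-vector fields $X_L$ and $X_H$ are well-defined across $Z$ and that $\{L,H\}$ is computed consistently as a $b$-function; once the computation of $X_L$ above is seen to be smooth (not merely a $b$-object), the rest is essentially the classical coupled spin-oscillator calculation transported through the substitution $z \mapsto \log|z|$, and the regularity statement $dL\wedge dH\neq 0$ almost everywhere on $Z$ follows from the nondegeneracy of the constant term $\rho_1\frac{dz}{z}$ in $dL|_Z$.
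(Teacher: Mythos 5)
Your proposal is correct and its core is the same computation the paper uses: the involution $\{L,H\}=X_L(H)=0$ verified chartwise in the cylindrical coordinates on $M^0$ and (for consistency) in the Cartesian charts on $M^{\pm}$. (Your $X_L=-\partial_\theta+v\,\partial_u-u\,\partial_v$ differs from the paper's by an overall sign coming from the opposite convention $\iota_{X_L}\omega=\pm dL$; this is immaterial for the vanishing of the bracket.) The one substantive difference is that you are more complete than the paper's own proof of the lemma: the paper only checks the bracket here and leaves the independence of $dL$ and $dH$ to the subsequent Proposition \ref{prop:singularitiesbcso}, whereas you explicitly verify that $L$ and $H$ are $b$-functions and that $dL\wedge dH\neq 0$ almost everywhere on $M$ and on $Z$ (your coefficient $\tfrac{\rho_1}{2}(-u\sin\theta+v\cos\theta)$ of $\tfrac{dz}{z}\wedge d\theta$ suffices; in fact the $\tfrac{dz}{z}\wedge du$ and $\tfrac{dz}{z}\wedge dv$ coefficients $\tfrac{\rho_1}{2}\cos\theta$ and $\tfrac{\rho_1}{2}\sin\theta$ never vanish simultaneously, so the form is nonzero at every point of $Z$). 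This extra care with the condition along $Z$, which is the genuinely $b$-specific part of Definition \ref{def:bintegrablesystems}, is a welcome addition rather than a deviation.
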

	
	\begin{proof}
		Geometrically, $L$ is the momentum map for the simultaneous rotation of the sphere around its vertical axis and the plane around the origin, while $H$ measures the difference between the polar angle on the sphere and on the plane. Then, $H$ is constant along the flow of $L$ and the Poisson bracket $\{L,H\}$ vanishes everywhere. With explicit computations, in $M^{\pm}$:
		\begin{align*}
			\{L,H\} &= X_L(H) = \left(-y\frac{\partial}{\partial x} + x\frac{\partial}{\partial y} -v \frac{\partial}{\partial u} + u \frac{\partial}{\partial v}\right)\left(\frac{1}{2}(xu+yv)\right) = \frac{1}{2}\left(-yu+xv-vx+uy\right) = 0,
		\end{align*}
		and, in $M^0$:
		\begin{align*}
			\{L,H\} &= X_L(H) = \left(\frac{\partial}{\partial \theta} - v \frac{\partial}{\partial u} + u \frac{\partial}{\partial v}\right)\left(\frac{\sqrt{1-z^2}}{2}\left(u\cos\theta + v\sin\theta\right)\right)\\
			&= \frac{\sqrt{1-z^2}}{2}(-u\sin\theta + v\cos\theta-v\cos\theta+u\sin\theta) = 0.
		\end{align*}
	\end{proof}
	
	\begin{remark}
		Notice that the choice of signs in the $b$-symplectic form $\omega = -\rho_1\, \omega^b_{\mathbb{S}^2} + \rho_2\, \omega_{\mathbb{R}^2}$ is such that the flow of $L$, when projected to $\mathbb{S}^2$, turns around the vertical axis counterclockwise and, when projected to $\mathbb{R}^2$, also rotates counterclockwise. On the other hand, the constants $\rho_1,\rho_2>0$ in the definition of $\omega$ ensure that the flow of $L$ is $2\pi$-periodic. This way, using the embedding of $\mathbb{S}^2$ in $\mathbb{R}^3$ and projecting $\mathbb{S}^2$ to the $z = 0$ hyperplane, points of $\mathbb{S}^2$ and points of $\mathbb{R}^2$ move with the same angular velocity under the flow of $L$, making the scalar product $ux + vy = 2H$ constant and, thus, the system $\{L,H\}$ integrable.
	\end{remark}
	
	\begin{proposition}
		The only singularities of the $b$-coupled spin-oscillator are two non-degenerate fixed points of focus-focus type at the “north pole” $((0, 0, 1), (0, 0)) \in \mathbb{S}^2 \times \mathbb{R}^2$ and the “south pole” $((0, 0, -1), (0, 0)) \in \mathbb{S}^2 \times \mathbb{R}^2$.
		\label{prop:singularitiesbcso}
	\end{proposition}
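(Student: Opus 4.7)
The plan is as follows. Since Lemma \ref{lemma:nofixedbintegrable} forbids fixed points on the critical hypersurface $Z$, it suffices to search for singular points inside $M \setminus Z = M^+ \cup M^-$, where the Cartesian coordinates $(x,y,u,v)$ give
\[
L = \frac{\rho_1}{2}\log(1-x^2-y^2) + \frac{\rho_2}{2}(u^2+v^2), \qquad H = \frac{1}{2}(xu+yv) .
\]
The condition $dL = 0$ immediately forces $x=y=u=v=0$, at which point $dH$ also vanishes, producing exactly the two fixed points $(0,0,\pm 1, 0, 0)$. To rule out rank-one singularities I would write $dL = \lambda\, dH$ with $\lambda \neq 0$ and combine the $dx$-component equation $-\rho_1 x / (1-x^2-y^2) = \lambda u / 2$ with the $du$-component equation $\rho_2 u = \lambda x / 2$; this forces $\lambda^2 = -4\rho_1 \rho_2 / (1-x^2-y^2) < 0$ whenever $x \neq 0$, and similarly for $y$, contradicting $\lambda \in \mathbb{R}$. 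Hence the two poles are the only singular points of the system.

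For the Williamson classification I would apply the recipe recalled after Definition \ref{def:nondegfixedpoints}. Writing $J = \begin{pmatrix} 0 & 1 \\ -1 & 0 \end{pmatrix}$ and $I_2$ for the $2\times 2$ identity, the relevant data at $(x,y,u,v) = (0,0,0,0)$ organize, in block form with blocks indexed by $(x,y)$ and $(u,v)$, as
\[
\mathrm{Hess}(L)|_0 = \begin{pmatrix} -\rho_1 I_2 & 0 \\ 0 & \rho_2 I_2 \end{pmatrix}, \quad \mathrm{Hess}(H)|_0 = \frac{1}{2}\begin{pmatrix} 0 & I_2 \\ I_2 & 0 \end{pmatrix}, \quad \Omega|_0 = \begin{pmatrix} -\rho_1 J & 0 \\ 0 & \rho_2 J \end{pmatrix} .
\]
A key observation is that these matrices are \emph{identical} at the two poles: in the charts $M^\pm$ both $\log|z| = \tfrac{1}{2}\log(1-x^2-y^2)$ and the $b$-symplectic coefficient $-\rho_1/(1-x^2-y^2)$ depend only on $z^2$, so they are insensitive to the hemisphere and it is enough to carry out the linearized analysis at a single pole.

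Using $J^{-1} = -J$, the block computation yields
\[
A_L = \Omega^{-1}\mathrm{Hess}(L)|_0 = \begin{pmatrix} -J & 0 \\ 0 & -J \end{pmatrix}, \qquad A_H = \Omega^{-1}\mathrm{Hess}(H)|_0 = \frac{1}{2}\begin{pmatrix} 0 & J/\rho_1 \\ -J/\rho_2 & 0 \end{pmatrix} ,
\]
together with $[A_L, A_H] = 0$, so $K(L,H) = \mathrm{span}(A_L, A_H)$ is a two-dimensional commutative subalgebra of $\mathfrak{sp}(4,\mathbb{R})$. For the distinguishing element I would take $A_L + A_H$: writing an eigenvector as $(v,w)$ and eliminating $w = 2\rho_1(I - \lambda J) v$ via $J^2 = -I$ collapses the $4\times 4$ characteristic equation to $(\lambda \pm i)^2 = 1/(4\rho_1\rho_2)$. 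The four eigenvalues $\pm \frac{1}{2\sqrt{\rho_1\rho_2}} \pm i$ are thus distinct and form a focus-focus quartet $\pm \alpha \pm i\beta$ with $\alpha, \beta > 0$; by the Williamson classification in \eqref{eq:classification_cartan}, $K(L,H)$ is therefore conjugate to the focus-focus Cartan subalgebra and both fixed points are non-degenerate of focus-focus type.

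The step I expect to be hardest is the final eigenvalue calculation: once the $2\times 2$-block structure based on $J$ and the commutation $[A_L, A_H] = 0$ are exploited, the problem collapses neatly to a quadratic in $\lambda \pm i$, but without that structural insight the characteristic polynomial of $A_L + A_H$ is unpleasant and the focus-focus form of its roots is not at all manifest. The remaining steps are essentially bookkeeping with the explicit formulas for $L$, $H$, and $\omega$ in the Cartesian charts $M^\pm$.
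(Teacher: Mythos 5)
Your classification of the two fixed points is correct and follows essentially the same route as the paper: the paper also linearizes at the poles in the Cartesian charts, obtains exactly your $d^2L$, $d^2H$ and $\Omega$, and extracts the eigenvalues of a combination $A_L + 2A_H$ (getting $\pm\tfrac{1}{\sqrt{\rho_1\rho_2}}\pm i$, consistent with your $\pm\tfrac{1}{2\sqrt{\rho_1\rho_2}}\pm i$ for $A_L+A_H$). Your block presentation via $J$, the commutation $[A_L,A_H]=0$, and the observation that the local data are literally identical in the two charts $U^{\pm}$ are a cleaner packaging of the same computation, and your rank-one exclusion on $M^{\pm}$ via $\lambda^2=-4\rho_1\rho_2/(1-x^2-y^2)<0$ is a valid Cartesian analogue of the paper's cylindrical computation.

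There is, however, one genuine gap: you dismiss the critical hypersurface $Z=\{z=0\}\times\mathbb{R}^2$ by citing Lemma \ref{lemma:nofixedbintegrable}, but that lemma only forbids \emph{fixed points} (rank $0$) on $Z$; it explicitly allows rank-$1$ singularities there. Since the proposition asserts that the two poles are the \emph{only} singularities of any rank, you must also rule out rank-$1$ points on $Z$, and your Cartesian charts $M^{\pm}=U^{\pm}\times\mathbb{R}^2$ do not cover the equator. The paper closes this by running the dependency computation in the cylindrical chart $M^0$, where $dL=\rho_1\tfrac{dz}{z}+\rho_2(u\,du+v\,dv)$ as a $b$-form, and the condition $\mu\,dL+dH=0$ reduces to $\mu^2\rho_1\rho_2+\tfrac{z^2}{1-z^2}=0$, which has no solution for any $z$, in particular at $z=0$ (where it would force $\mu=0$, contradicting $dH\neq 0$). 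Adding this one computation on $Z$ completes your argument; without it the uniqueness claim is unproved on a codimension-one set.
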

	
	\begin{proof}
		A point in the $b$-coupled spin-oscillator is singular if the rank of $dF=(dL,dH)$ there is lower than $2$. It is equal to $0$ only at $x=y=u=v=0$ (or, equivalently, at $z=\pm 1,u=v=0$), where $dL$ and $dH$ vanish. Then, the integrable system $F=(L,H)$ has just two fixed points, the “north pole” $((0, 0, 1), (0, 0)) \in \mathbb{S}^2 \times \mathbb{R}^2$ and the “south pole” $((0, 0, -1), (0, 0)) \in \mathbb{S}^2 \times \mathbb{R}^2$. One can similarly check that the poles are the only points in $M$ where the $b$-Hamiltonian vector fields $X_L$ and $X_H$ vanish simultaneously.
		
		To prove that the poles are non-degenerate and of focus-focus type, we follow Section 1.8.2 of Bolsinov and Fomenko~\cite{BolsinovFomenko04}. In particular, we prove that the quadratic parts of $L$ and $H$, i.e., their Hessians $d^2L$ and $d^2H$, are independent as forms and that there exists a linear combination of the symplectic operators $\omega^{-1}d^2L$ and $\omega^{-1}d^2H$ with four different eigenvalues of the form $\pm a \pm ib$.
		
		At the north and south poles, in coordinates $(x,y,u,v)$, the Hessians of $L$ and $H$ and the matrix form of $\omega$ have the following expression:
		\begin{equation*}
			d^2L = 
			\begin{pmatrix}
				-\rho_1 & 0 & 0 & 0\\
				0 & -\rho_1 & 0 & 0\\
				0 & 0 & \rho_2 & 0\\
				0 & 0 & 0 & \rho_2
			\end{pmatrix}
			\hspace{20pt}
			d^2H= 
			\frac{1}{2}\begin{pmatrix}
				0 & 0 & 1 & 0\\
				0 & 0 & 0 & 1\\
				1 & 0 & 0 & 0\\
				0 & 1 & 0 & 0
			\end{pmatrix}
			\hspace{20pt}
			\Omega= 
			\begin{pmatrix}
				0 & -\rho_1 & 0 & 0\\
				\rho_1 & 0 & 0 & 0\\
				0 & 0 & 0 & \rho_2\\
				0 & 0 & -\rho_2 & 0
			\end{pmatrix}.
		\end{equation*}
		The matrices $d^2L$ and $d^2H$ are clearly independent and give raise to the following symplectic operators:
		\begin{equation*}
			A_L:=\Omega^{-1}d^2L = 
			\begin{pmatrix}
				0 & -1 & 0 & 0\\
				1 & 0 & 0 & 0\\
				0 & 0 & 0 & -1\\
				0 & 0 & 1 & 0
			\end{pmatrix}
			\hspace{40pt}
			A_H:=\Omega^{-1}d^2H= 
			\frac{1}{2}\begin{pmatrix}
				0 & 0 & 0 & \frac1{\rho_1} \\
				0 & 0 & \frac{-1}{\rho_1} & 0\\
				0 & \frac{-1}{\rho_2} & 0 & 0\\
				\frac1{\rho_2} & 0 & 0 & 0
			\end{pmatrix}.
		\end{equation*}
		The operator corresponding to the linear combination $A_L + 2A_H$ has the form
		\begin{equation*}
			\begin{pmatrix}
				0 & -1 & 0 & \frac1{\rho_1} \\
				1 & 0 & \frac{-1}{\rho_1} & 0\\
				0 & \frac{-1}{\rho_2} & 0 & -1\\
				\frac1{\rho_2} & 0 & 1 & 0
			\end{pmatrix},
		\end{equation*}
		and its four different complex eigenvalues are $\pm \frac{1}{\sqrt{\rho_1 \rho_2}} \pm i$, proving that the poles are non-degenerate singularities of focus-focus type.
		
		On the other hand, $F=(L,H)$ does not have any singular points where the rank of $dF$ is equal to $1$. Actually, the rank of $dF$ is equal to $0$ at the poles and equal to $2$ elsewhere, because the differentials of $L$ and $H$ are linearly independent on $M^0=M\setminus\{((0, 0, \pm 1), (0, 0))\}$. Indeed, on $M^0$ they write as:
		\begin{equation*}
			\begin{cases}
				dL = \rho_1 \frac{dz}{z} + \rho_2 \left(udu+vdv\right)\\
				dH = \frac{\sqrt{1-z^2}}{2}\left(\frac{-z^2}{1-z^2}\left(u\cos\theta + v\sin\theta\right) \frac{dz}{z}
				+ \left(-u\sin\theta + v\cos\theta\right) d\theta
				+ \cos\theta du
				+ \sin\theta dv\right)
			\end{cases}.
		\end{equation*}
		Observe that none of them vanishes and suppose that there is a point where the rank of $dF$ is equal to $1$. Then, $dL$ and $dH$ are linearly dependent at this point and there exists $\mu$ different from $0$ such that $\mu dL + dH = 0$ there. We have the following equivalences:
		\begin{align*}
			&\mu dL + dH = 0 \\
			\iff & \begin{cases}
				\frac{dz}{z}:\mu \rho_1 + \frac{-z^2}{1-z^2}\left(u\cos\theta + v\sin\theta\right) = 0\\
				d\theta:-u\sin\theta + v\cos\theta= 0\\
				du:\mu \rho_2 u + \cos\theta = 0\\
				dv:\mu \rho_2 v + \sin\theta = 0
			\end{cases} \\
			\iff & \begin{cases}
				\mu \rho_1 + \mu \rho_2\frac{z^2}{1-z^2}\left(u^2 + v^2\right) = 0\\
				\mu^2\rho_2^2(u^2+v^2)=1
			\end{cases} \\
			\iff & \mu^2 \rho_1 \rho_2 + \frac{z^2}{1-z^2} = 0,
		\end{align*}
		but the last equality is clearly never satisfied because $\mu^2 \rho_1 \rho_2$ is strictly positive while $\frac{z^2}{1-z^2}$ is always non-negative on $M^0$. Then, there is no point on $M^0$ where the rank of $dF$ is equal to $1$.
	\end{proof}
	
	\begin{proposition}
		The momentum map $(L,H) : S^2 \times \mathbb{R}^2 \to \mathbb{R}^2$ of the $b$-coupled spin-oscillator is surjective.
		\label{prop:mmbcsosurjective}
	\end{proposition}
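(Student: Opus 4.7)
The plan is to show surjectivity by explicitly constructing a preimage for every $(l_0,h_0)\in\mathbb{R}^2$, using the intermediate value theorem on a well-chosen one-parameter family of points in $M$.

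I would first treat the generic case $h_0\neq 0$, restricting attention to the slice $\{y=0=v,\ x>0,\ 0<z<1\}\subset M$, parameterised by $z\in(0,1)$ with $x=\sqrt{1-z^2}$ and $u\in\mathbb{R}$ free. On this slice the momentum map takes the simple form $H=\tfrac{1}{2}xu$ and $L=\rho_1\log z+\tfrac{\rho_2}{2}u^2$. Setting $u:=2h_0/\sqrt{1-z^2}$ enforces $H=h_0$ identically, and then $L$ reduces to the continuous function
\[
\phi(z):=\rho_1\log z+\frac{2\rho_2 h_0^2}{1-z^2},\qquad z\in(0,1).
\]
Since $\phi(z)\to-\infty$ as $z\to 0^+$ (the logarithmic term dominates while the second term stays bounded near $0$) and $\phi(z)\to+\infty$ as $z\to 1^-$ (the second term blows up while $\log z\to 0$), the intermediate value theorem produces some $z_0\in(0,1)$ with $\phi(z_0)=l_0$. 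The corresponding quintuple $(x,y,z,u,v)$ is then a preimage of $(l_0,h_0)$.

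For the remaining case $h_0=0$, I would cover the two signs of $L$ with two auxiliary families. For $l_0\le 0$, one can pick $z\in(0,1]$ with $\rho_1\log z=l_0$ and take the point $(x,y,z,u,v)=(\sqrt{1-z^2},0,z,0,0)$, at which $H=0$ and $L=l_0$. For $l_0>0$, one can take the north pole $(x,y,z)=(0,0,1)$ together with $v=0$ and $u=\sqrt{2l_0/\rho_2}$, giving $H=\tfrac12 xu=0$ and $L=\tfrac{\rho_2}{2}u^2=l_0$. Combined with the generic construction this exhausts $\mathbb{R}^2$.

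The argument is elementary: the entire proof is an intermediate value theorem check on an explicit one-parameter family. The only (minor) obstacle is the degeneration of the generic slice when $h_0=0$, which forces the separate case analysis above; once that is handled, no delicate estimates or geometric input are needed.
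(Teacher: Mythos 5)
Your proof is correct; every step checks out. On the slice $y=v=0$, $x=\sqrt{1-z^2}$, $0<z<1$, the choice $u=2h_0/\sqrt{1-z^2}$ does pin $H=h_0$, the resulting function $\phi(z)=\rho_1\log z+\tfrac{2\rho_2h_0^2}{1-z^2}$ is continuous with the stated limits $-\infty$ and $+\infty$ at the endpoints, and your two explicit families for $h_0=0$ cover both signs of $l_0$. The strategy is the same elementary one as the paper's --- restrict to a two-dimensional slice where $(x,y)$ and $(u,v)$ are collinear and run an intermediate-value argument --- but you interchange the roles of the two components: the paper first proves $L$ is surjective, then parametrizes each fiber $\{L=\ell\}$ and shows $H$ sweeps out all values on it (which forces a case split on the sign of $\ell$, according to whether $N$ or $|z|$ can be isolated), whereas you fix a level set of $H$ first and let $L$ sweep $\mathbb{R}$ along it, which relocates the case analysis to the degenerate value $h_0=0$. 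Your ordering buys a single clean function $\phi$ with both endpoint limits computed at once, at the cost of the extra (but trivial) $h_0=0$ case; the paper's ordering has the advantage of establishing along the way the slightly stronger statement that $H$ is surjective on every fiber of $L$, which is more informative about the fibration structure.
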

	
	\begin{proof}
		We begin by claiming that $L : S^2 \times \mathbb{R}^2 \to \mathbb{R}$ is surjective.
		
		Indeed, it is clear that the equation $\rho_1 \log \lvert z\rvert + \frac{\rho_2}2 (u^2 + v^2) = \ell$ has a solution for any choice of $\rho_1, \rho_2 > 0$ and $\ell \in \mathbb{R}$: if $\ell = 0$, then $z=\pm 1$, $(u,v) = (0,0)$ is a preimage; if $\ell > 0$, then we can take $z=\pm 1$ and $(u,v)$ such that $u^2 + v^2 = \frac{2\ell}{\rho_2}$; and if $\ell < 0$ we can take $z = \pm \exp\left(\frac{\ell}{\rho_1}\right)$ and $(u,v) = (0,0)$.
		
		Further, we claim that $H$ is surjective when restricted to any given fiber $\{L = \ell\}$.
		
		For simplicity we restrict ourselves to the points $(x,y,z,u,v) \in S^2 \times \mathbb{R}^2$ such that the vectors $(x,y)$ and $(u,v)$ are collinear in $\mathbb{R}^2$.
		In that case, $H$ can be expressed as $H = \pm \frac12 \|(x,y)\| \|(u,v)\|$, where the sign depends on whether $(x,y)$ and $(u,v)$ point in the same or in opposite directions.
		Since $(x,y,z)$ lies in the sphere, we know that $\|(x,y)\| = \sqrt{1 - z^2}$.
		Let $N := \|(u,v)\|$.
		
		With these notations, the momentum map can be expressed in this restriction as
		\begin{equation*}
			\begin{cases}
				L(z,N) = \rho_1\log \lvert z \rvert + \frac{\rho_2}2 N^2 \\
				H(z,N) = \pm \frac12 \sqrt{1-z^2} N
			\end{cases}.
		\end{equation*}
		
		Let us now assume that $L(z,N) = \ell$ for some $\ell \in \mathbb{R}$.
		We will study separately the cases in which $\ell \geq 0$ and $\ell \leq 0$.
		
		\begin{itemize}
			\item If $\ell \geq 0$, then $z$ may take any value within $[-1,1]$, and we can isolate $N$ with respect to $z$,
			\[N = \sqrt{\frac2{\rho_2} \left(\ell - \rho_1 \log \lvert z \rvert\right)} ,\]
			which allows us to conclude that $N \geq \sqrt{\frac{2\ell}{\rho_2}}$.
			Moreover, the expression
			\[H_+ = \frac12 \sqrt{1 - z^2} N = \frac12 \sqrt{1 - z^2} \sqrt{\frac2{\rho_2} \left(\ell - \log |z|\right)}\]
			may take any non-negative value (as $H_+(1) = 0$, $\displaystyle\lim_{z\to 0} H_+(z) = +\infty$, and $H_+$ is continuous), and thus $H$ is surjective under the assumption that $L = \ell$.
			
			\item If $\ell \leq 0$, then $N$ may take any non-negative value, and we can isolate $|z|$ with respect to $N$,
			\[|z| = \exp\left(\frac1{\rho_1} \left(\ell - \frac{\rho_2}2 N^2\right)\right) ,\]
			which means that $|z| \leq \exp\left(\frac{\ell}{\rho_1}\right)$.
			We can conclude from this that the expression $H_+ = \frac12 \sqrt{1 - z^2} N$ may take any non-negative value, and therefore $H$ is surjective for the fiber $\{L = \ell\}$.
		\end{itemize}
		\begin{figure}[ht!]
			\begin{tikzpicture}
				\begin{axis}[ticks=none,
					axis y line=center,
					axis x line=middle,
					axis on top,xmin=-3,xmax=3,
					domain=-3:3,xlabel=$L$,ylabel=$H$]            \addplot[fill,mark=none,blue!10,samples=100]
					({x},{3}) \closedcycle;\addplot[fill,mark=none,blue!10,samples=100]
					({x},{-3}) \closedcycle;
					\addplot[red, mark=*,mark options={mark size=2pt}] coordinates {(0,0)};
				\end{axis}
			\end{tikzpicture}
			\caption{Image of the momentum map of the $b$-coupled spin-oscillator. The red dot is the image of the focus-focus singularities.}
			\label{fig:momentmapbcso}
		\end{figure}
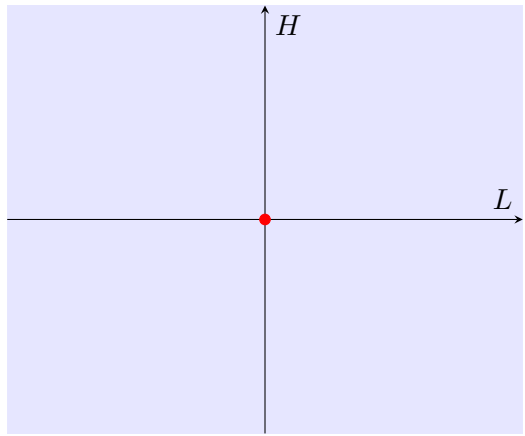
	\end{proof}
	
	\begin{remark}
		Notice that, due to the symmetry of the system, the image of a point $(x,y,u,v)$ by the momentum map $(L,H)$ coincides with the image of the point $(-x,-y,-u,-v)$. This, together with the fact that the image of $(L,H)$ is the whole $\mathbb{R}^2$, implies that the image of each open hemisphere is $\mathbb{R}^2$.
		\label{rem:symmetryofthesystem}
	\end{remark}
	
	\begin{corollary}
		The fact that the momentum map of the $b$-coupled spin-oscillator is surjective on both open hemispheres $M^+$ and $M^-$ can be also seen as a corollary of Proposition \ref{prop:singularitiesbcso}.
	\end{corollary}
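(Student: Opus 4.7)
The plan is to rederive the surjectivity of $(L,H)$ on each hemisphere $M^\pm$ from the critical-point information of Proposition \ref{prop:singularitiesbcso}, bypassing the direct preimage construction of Proposition \ref{prop:mmbcsosurjective}. By the involution $z\mapsto -z$, which leaves both components of $(L,H)$ invariant and interchanges $M^+$ with $M^-$, it is enough to treat $M^+$.

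I would first show that $(L,H)(M^+)\subseteq \mathbb{R}^2$ is open and contains $(0,0)$. By Proposition \ref{prop:singularitiesbcso} the only critical point of $(L,H)$ on $M^+$ is the focus-focus fixed point $N$ at the north pole, with image $(0,0)$. Away from $N$ the map is a submersion and hence open; at $N$, the Eliasson normal form of Theorem \ref{thm:localnf} conjugates $(L,H)$, up to a local diffeomorphism of the target, with the standard focus-focus pair $(x_1\xi_1+x_2\xi_2,\, x_1\xi_2-x_2\xi_1)$, whose image is a full neighbourhood of the origin. Next I would analyse the level sets $\Sigma_c:= L^{-1}(c)\cap M^+$ to show that $H(\Sigma_c)=\mathbb{R}$ for every $c\in\mathbb{R}$. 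Proposition \ref{prop:singularitiesbcso} implies that $\Sigma_c$ is a smooth connected $3$-manifold, with a single nodal point only when $c=0$. On $\Sigma_c$ the constraint $u^2+v^2 = \tfrac{2}{\rho_2}(c-\rho_1\log|z|)$ shows that $(u,v)$ becomes unbounded precisely as $z\to 0^+$; together with $|H|\leq \tfrac{1}{2}\sqrt{1-z^2}\sqrt{u^2+v^2}$ and the observation that equality is realised when $(x,y)$ is (anti-)parallel to $(u,v)$, this forces $H$ to take arbitrarily large values of both signs along the $b$-end of $\Sigma_c$. Since $H$ vanishes at points where $(x,y)\perp(u,v)$ and is continuous on the connected $\Sigma_c$, the intermediate value theorem gives $H(\Sigma_c)=\mathbb{R}$, hence $(L,H)(M^+)=\mathbb{R}^2$.

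The principal obstacle is that $L|_{M^+}$ is not proper: $L\to -\infty$ as one approaches the critical hypersurface $Z$ from above and $L\to +\infty$ as $(u,v)\to\infty$, so neither an Atiyah--Guillemin--Sternberg convexity argument nor a naive compactness-of-preimages strategy is directly available. The key structural input coming from the $b$-setting is that the noncompact $b$-end $z\to 0^+$ of each level set $\Sigma_c$ forces $(u,v)$ to escape, which in turn makes $H$ unbounded on $\Sigma_c$ along suitable directions. This divergence, produced by the $b$-logarithmic shape of $L$, is exactly the feature that distinguishes the present system from the smooth coupled spin-oscillator (where the corresponding level sets would be compact and the image would be a strict subset of $\mathbb{R}^2$) and is what allows the surjectivity claim to be extracted from the critical-point description of Proposition \ref{prop:singularitiesbcso}.
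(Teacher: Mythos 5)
Your argument is correct, but it takes a genuinely different route from the paper's. The paper's proof is purely qualitative: on the open hemisphere $M^+$ the system is semitoric in the classical sense, the boundary of the image of a semitoric momentum map consists of images of singular points, the only singular point in $M^+$ is the focus-focus north pole whose image lies in the interior of the image, and since the complement of the image is open, a non-empty complement would force a non-empty boundary of the image --- a contradiction, so the image is all of $\mathbb{R}^2$. Your proof instead combines an openness statement (submersion away from the pole plus the focus-focus normal form at the pole) with a fibrewise statement $H\bigl(L^{-1}(c)\cap M^+\bigr)=\mathbb{R}$. Two observations. First, your fibrewise step alone already gives surjectivity, so the openness step is redundant; and that fibrewise step is essentially the computation of Proposition \ref{prop:mmbcsosurjective} restricted to a hemisphere (the constraint $u^2+v^2=\tfrac{2}{\rho_2}(c-\rho_1\log|z|)$, the bound $|H|\le\tfrac12\sqrt{1-z^2}\sqrt{u^2+v^2}$ with equality in the collinear case, and the blow-up as $z\to 0^+$). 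So your argument derives the result mainly from the explicit formulas rather than from Proposition \ref{prop:singularitiesbcso}, which is what the corollary advertises; the only inputs you actually take from the singularity classification are smoothness of the fibres and openness of the image, while the connectedness of $L^{-1}(c)\cap M^+$ is asserted rather than derived from it (it is true, but it requires the explicit parametrization of the fibre by $(u,v)$ together with the circle in $(x,y)$, not the critical-point data). Second, your remark that $L|_{M^+}$ is not proper is a real merit: it isolates precisely the step the paper leaves implicit, namely why the image is closed and why the boundary structure of semitoric momentum-map images (normally stated under a properness hypothesis) applies on the open hemisphere; your direct fibrewise argument sidesteps that issue entirely, at the cost of being less of a ``corollary'' and more of a second direct proof.
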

	
	\begin{proof}
		Consider the Northern hemisphere $M^+$. It is a symplectic manifold and there the system is semitoric in the classical sense. The image of a focus-focus singularity in a semitoric system is in the interior of the whole image of the momentum map, so the origin of $\mathbb{R}^2$ (the image of the North pole) is contained in the interior of the image of $F=(L,H)$. Now, let $U\in\mathbb{R}^2$ be the set of points which are not in the image of $F$. Observe that $U$ is an open set and, then, if $U$ is non-empty, the image of $F$ has a non-empty boundary. In a semitoric system, the boundary of the image of the momentum map is made of points whose preimages are points on $M^+$ in which the momentum map is singular. But there are no other singular points in $M^+$ a part of the North pole, so $U$ has to be the empty set and $F$ is surjective on $M^+$.
		
		The same argument proves that $F$ is surjective on the Southern hemisphere $M^-$.
	\end{proof}
	
	\begin{remark}
		The results of Propositions \ref{prop:singularitiesbcso} and \ref{prop:mmbcsosurjective} are summarized in the image of the moment map drawn on Figure \ref{fig:momentmapbcso}. It shows that the $b$-coupled spin-oscillator behaves differently than the original coupled spin-oscillator, which has one focus-focus singularity, one elliptic-elliptic singularity and two one-parameter families of elliptic-regular singularities emanating from $p$ (see Figure \ref{fig:momentmapcso}).
	\end{remark}
	
	\subsection{The reversed $b$-coupled spin-oscillator}
	
	The signs of the $b$-symplectic form $\omega$ have to be consistent with the signs in the $L$ component of the momentum map (which is generating the $\mathbb{S}^1$-action) in order to define an integrable system. A choice was made in the definition of the $b$-coupled spin-oscillator (Definition \ref{def:bcoupledspin-oscillator}), but there is a second possibility which is also valid.
	
	Let $\rho_1,\rho_2>0$ be positive constants. Consider again the $b$-manifold
	\[(M=\mathbb{S}^2\times\mathbb{R}^2,\ Z=\{(x,y,z)\in\mathbb{S}^2\mid z=0\}\times\mathbb{R}^2).\]
	Consider now, differently, the $b$-symplectic form $\omega = \rho_1\, \omega^b_{\mathbb{S}^2} + \rho_2\, \omega_{\mathbb{R}^2}$ on $M$, where $\omega^b_{\mathbb{S}^2}$ is the standard $b$-symplectic form on $(\mathbb{S}^2,Z=\{(x,y,z)\in\mathbb{S}^2\mid z=0\})$ and $\omega_{\mathbb{R}^2}$ is the standard symplectic form on $\mathbb{R}^2$.
	
	Recall again the coordinate charts $U^0$ and $U^{\pm}$ from Definition \ref{def:coupledspin-oscillator}.
	On $U^0$ and in coordinates $(\theta,z,u,v)$, the $b$-symplectic form writes as
	$$\omega = \rho_1\, d\theta \wedge \frac{dz}{z} + \rho_2\,du \wedge dv,$$
	while on $U^{\pm}$ and in coordinates $(x,y,u,v)$, it writes as
	$$\omega = \rho_1\frac{1}{1-x^2-y^2}dx\wedge dy + \rho_2\,du \wedge dv.$$
	
	\begin{definition}
		The \emph{reversed $b$-coupled spin-oscillator} is a $4$-dimensional Hamiltonian integrable system $(M,\omega,F=(L,H))$, where
		\begin{equation}
			\begin{cases}
				L(x,y,z,u,v) = -\rho_1 \log\lvert z \rvert + \frac{\rho_2}{2}\left(u^2+v^2\right)\\
				H(x,y,z,u,v) = \frac{1}{2}\left(xu+yv\right)
			\end{cases}.
			\label{eq:bcsorev}
		\end{equation}
		\label{def:bcoupledspin-oscillatorrev}
	\end{definition}

	The reversed $b$-coupled spin-oscillator is essentially the same simultaneously rotating system as the $b$-coupled spin-oscillator but with a slight different coupling which does not change its integrability.
	
	\begin{lemma}
		The reversed $b$-coupled spin-oscillator is a $b$-integrable system.
		\label{lem:bcspintsysrev}
	\end{lemma}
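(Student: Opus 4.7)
The plan is to mirror, almost verbatim, the proof of Lemma \ref{lem:bcspintsys}. The system $(M,\omega,F=(L,H))$ is $b$-integrable if and only if three conditions hold: $L$ and $H$ are $b$-functions in the sense of Definition \ref{def:bfunction}, they Poisson-commute with respect to $\omega$, and $dL\wedge dH$ is non-vanishing almost everywhere on $M$ and almost everywhere on $Z$. The first condition is immediate, since $-\rho_1\log|z| + \tfrac{\rho_2}{2}(u^2+v^2)$ is a constant multiple of the logarithm of a local defining function for $Z$ plus a smooth function, while $H=\tfrac12(xu+yv)$ is smooth.

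The central observation is that the two sign reversals relative to Definition \ref{def:bcoupledspin-oscillator} are compensating. In Definition \ref{def:bcoupledspin-oscillatorrev} one replaces $\omega_{\mathbb{S}^2}^b\mapsto-\omega_{\mathbb{S}^2}^b$ and $\rho_1\log|z|\mapsto-\rho_1\log|z|$ simultaneously, so the equation $\iota_{X_L}\omega=dL$ produces \emph{the same} Hamiltonian vector field as in the previous subsection. Explicitly, writing out the inner product on $M^0$ in coordinates $(\theta,z,u,v)$ and on $M^\pm$ in $(x,y,u,v)$, one obtains
\[X_L=\frac{\partial}{\partial\theta}-v\frac{\partial}{\partial u}+u\frac{\partial}{\partial v}\quad\text{on }M^0,\qquad X_L=-y\frac{\partial}{\partial x}+x\frac{\partial}{\partial y}-v\frac{\partial}{\partial u}+u\frac{\partial}{\partial v}\quad\text{on }M^\pm,\]
exactly as before. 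Therefore $\{L,H\}=X_L(H)$ reproduces the identical calculation of Lemma \ref{lem:bcspintsys} in both charts, and vanishes identically. Geometrically, $X_L$ is again the generator of the diagonal $\mathbb{S}^1$-action rotating the sphere around its vertical axis simultaneously with the plane around the origin, and $H=\tfrac12\langle(x,y),(u,v)\rangle$ is manifestly invariant under it.

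Finally, for the rank condition, an argument parallel to the one at the end of the proof of Proposition \ref{prop:singularitiesbcso} shows that $dL$ and $dH$ are linearly dependent only on a closed set of measure zero: the set where they fail to be independent is cut out by polynomial-logarithmic equations in the coordinates (after multiplying through by $z$ on $M^0$), and in particular its intersection with $Z=\{z=0\}$ is also of measure zero in $Z$. Hence the $b$-form $dL\wedge dH\in{}^b\Omega^4(M)$ is non-zero almost everywhere on $M$ and almost everywhere on $Z$, as required by Definition \ref{def:bintegrablesystems}.

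The main (and only) obstacle is the sign-bookkeeping in verifying that the two reversals indeed cancel in $\iota_{X_L}\omega=dL$; once that is checked, the remainder of the proof is a direct transcription of the computations already performed for the $b$-coupled spin-oscillator.
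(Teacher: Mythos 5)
Your proposal is correct and follows essentially the same route as the paper: the paper's proof (together with the remark immediately after it) likewise rests on the observation that the sign reversals in $\omega$ and $L$ cancel, so that $X_L$ and $H$ retain the same expressions and the involution computation of Lemma \ref{lem:bcspintsys} carries over verbatim. Your additional explicit verification of the almost-everywhere independence of $dL$ and $dH$ on $M$ and on $Z$ is sound and slightly more thorough than the paper, which leaves that condition to the later analysis of the singular points.
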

	
	\begin{proof}
		Geometrically, we have the same picture as in the $b$-coupled spin-oscillator. Namely, $L$ is the momentum map for the simultaneous rotation of the sphere around its vertical axis and the plane around the origin, while $H$ measures the difference between the polar angle on the sphere and on the plane. Then, $H$ is constant along the flow of $L$ and the Poisson bracket $\{L,H\}$ vanishes everywhere.
	\end{proof}
	
	\begin{remark}
		The explicit computation of the Poisson bracket is analogous to the proof of Lemma \ref{lem:bcspintsys}, since both $X_L$ and $H$ have the same expression.
	\end{remark}
	
	In contrast with the $b$-coupled spin-oscillator, in the reversed system the two fixed points correspond to non-degenerate singularities of elliptic-elliptic type and there is an infinite number of singular points of rank $1$. More explicitly, we have:
	
	\begin{proposition}
		The singularities of the reversed $b$-coupled spin-oscillator are two non-degenerate fixed points of elliptic-elliptic type at the “north pole” $((0, 0, 1), (0, 0)) \in \mathbb{S}^2 \times \mathbb{R}^2$ and the “south pole” $((0, 0, -1), (0, 0)) \in \mathbb{S}^2 \times \mathbb{R}^2$ and four one-parameter families of elliptic-regular singularities emanating from the poles.
		\label{prop:singularitiesbcsorev}
	\end{proposition}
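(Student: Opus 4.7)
The proof will follow the template of Proposition~\ref{prop:singularitiesbcso}, tracking how the simultaneous sign reversal of $\omega|_{\mathbb{S}^2}$ and the coefficient of $\log|z|$ in $L$ changes the eigenvalue structure at each singular point. Since $H$ is unchanged and $dL$ still vanishes only at $\{z = \pm 1,\ u = v = 0\}$, the rank-$0$ locus is again exactly the two poles. To classify them, I work at the North pole in coordinates $(x,y,u,v)$, where $-\rho_1 \log|z| \approx \tfrac{\rho_1}{2}(x^2+y^2)$ and so $d^2 L = \mathrm{diag}(\rho_1, \rho_1, \rho_2, \rho_2)$ is now positive definite; the sphere block of $\Omega$ simultaneously flips sign. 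A short computation shows that $A_L = \Omega^{-1}d^2 L$ coincides with the operator from Proposition~\ref{prop:singularitiesbcso}, while the entries of $A_H = \Omega^{-1}d^2 H$ involving $\rho_1$ change sign. Writing $A_L + c A_H$ as a block matrix whose blocks are scalar multiples of a fixed $J$ with $J^2 = -I$ and diagonalising, the four eigenvalues come out to $\pm i \bigl(1 \pm c/(2\sqrt{\rho_1 \rho_2})\bigr)$, all purely imaginary and, for generic $c$, distinct. By the Williamson classification this confirms that both poles are non-degenerate of elliptic-elliptic type.

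Next I look for rank-$1$ points by solving $\mu\, dL + dH = 0$ with $\mu \neq 0$ on $M^0$ in cylindrical coordinates $(z, \theta, u, v)$. The $d\theta$ coefficient gives $u \sin\theta = v \cos\theta$; the $du$ and $dv$ coefficients together force $4\mu^2 \rho_2^2(u^2 + v^2) = 1 - z^2$ via $\cos^2\theta+\sin^2\theta=1$; substituting into the $dz/z$ coefficient reduces to the compatibility $\mu^2 = z^2 / (4\rho_1 \rho_2)$, yielding two branches $\mu = \pm z/(2\sqrt{\rho_1 \rho_2})$. Each branch forces either $\theta = \phi$ or $\theta = \phi + \pi$, where $\phi = \arg(u,v)$, together with $u^2 + v^2 = \rho_1(1 - z^2)/(\rho_2 z^2)$. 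This produces on each of the two open hemispheres exactly two $2$-dimensional submanifolds of rank-$1$ singularities---one on which $(x,y)$ is parallel to $(u,v)$ and one on which it is antiparallel---for a total of four. Each of them is a single $S^1$-orbit family parameterised by $z$, i.e.\ a one-parameter family in the usual Bolsinov--Fomenko sense, and as $|z| \to 1$ both $u^2+v^2 \to 0$ and $x^2+y^2 \to 0$, so each family emanates from the corresponding pole.

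The last step is to classify these rank-$1$ singularities as elliptic-regular. By the $S^1$-symmetry generated by $L$, the Williamson type is constant along each orbit; since each of the four families is a connected $2$-dimensional submanifold, the type is also constant in $z$ provided non-degeneracy persists. I would pick a convenient representative point in each family (for instance $\phi = 0$) and compute the Hessian of $\mu L + H$ restricted to a symplectic complement of the orbit direction, verifying that it is definite so that the associated transverse operator has purely imaginary eigenvalues. The main technical obstacle is precisely to confirm that non-degeneracy is maintained along the entire $2$-dimensional submanifold as $z$ varies in $(0,1)$ (respectively $(-1,0)$): one needs the determinant of the transverse Hessian to be non-zero throughout, which boils down to a one-variable inequality in $z$. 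Once this is secured, the discrete type cannot jump, so the four families are uniformly elliptic-regular, completing the proof.
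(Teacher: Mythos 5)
Your proposal follows essentially the same route as the paper's proof: the rank-$0$ locus is identified as the two poles, the linearization is computed in Cartesian coordinates with the sphere blocks of $d^2L$ and $\Omega$ both flipping sign so that $A_L$ is unchanged and only the $\rho_1$-entries of $A_H$ change sign, the eigenvalues $\pm i\bigl(1\pm c/(2\sqrt{\rho_1\rho_2})\bigr)$ agree with the paper's $\pm i\bigl(1\pm\gamma/\sqrt{\rho_1\rho_2}\bigr)$ for $c=2\gamma$, and the rank-$1$ equation $\mu\,dL+dH=0$ yields the same compatibility condition $z^2=4\mu^2\rho_1\rho_2$ and the same parametrization $u^2+v^2=\rho_1(1-z^2)/(\rho_2 z^2)$ with four connected families emanating from the poles (your bookkeeping of the $\tfrac{\sqrt{1-z^2}}{2}$ factors is in fact more careful than the paper's displayed intermediate system, though both reach the correct parametrization). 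The only substantive difference is that you flag, but do not actually carry out, the transverse-Hessian verification that these rank-$1$ families are elliptic-regular; the paper's own proof omits this computation entirely and simply exhibits the $\mathbb{S}^1$-orbits, so your proposal is, if anything, more explicit about where the remaining work lies.
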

	
	\begin{proof}
		A point in the reversed $b$-coupled spin-oscillator is singular if the rank of $dF=(dL,dH)$ there is lower than $2$. Direct computation shows that it is $0$ only at the north and south poles $((0, 0, \pm 1), (0, 0)) \in \mathbb{S}^2 \times \mathbb{R}^2$. We follow again section 1.8.2 of Bolsinov and Fomenko~\cite{BolsinovFomenko04} to prove that the poles are non-degenerate fixed points of elliptic-elliptic type.
		
		At the north and south poles, in coordinates $(x,y,u,v)$, we have:
		\begin{equation*}
			d^2L = 
			\begin{pmatrix}
				\rho_1 & 0 & 0 & 0\\
				0 & \rho_1 & 0 & 0\\
				0 & 0 & \rho_2 & 0\\
				0 & 0 & 0 & \rho_2
			\end{pmatrix}
			\hspace{20pt}
			d^2H= 
			\frac{1}{2}\begin{pmatrix}
				0 & 0 & 1 & 0\\
				0 & 0 & 0 & 1\\
				1 & 0 & 0 & 0\\
				0 & 1 & 0 & 0
			\end{pmatrix}
			\hspace{20pt}
			\Omega= 
			\begin{pmatrix}
				0 & \rho_1 & 0 & 0\\
				-\rho_1 & 0 & 0 & 0\\
				0 & 0 & 0 & \rho_2\\
				0 & 0 & -\rho_2 & 0
			\end{pmatrix}.
		\end{equation*}
		The matrices $d^2L$ and $d^2H$ are independent and give raise to:
		\begin{equation*}
			A_L:=\Omega^{-1}d^2L = 
			\begin{pmatrix}
				0 & -1 & 0 & 0\\
				1 & 0 & 0 & 0\\
				0 & 0 & 0 & -1\\
				0 & 0 & 1 & 0
			\end{pmatrix}
			\hspace{40pt}
			A_H:=\Omega^{-1}d^2H= 
			\frac{1}{2}\begin{pmatrix}
				0 & 0 & 0 & \frac{-1}{\rho_1} \\
				0 & 0 & \frac1{\rho_1} & 0\\
				0 & \frac{-1}{\rho_2} & 0 & 0\\
				\frac1{\rho_2} & 0 & 0 & 0
			\end{pmatrix}.
		\end{equation*}
		For any $\gamma > 0$, the linear combination $A_L + 2 \gamma A_H$ has the form
		\begin{equation*}
			\begin{pmatrix}
				0 & -1 & 0 & \frac{-\gamma}{\rho_1} \\
				1 & 0 & \frac{\gamma}{\rho_1} & 0\\
				0 & \frac{-\gamma}{\rho_2} & 0 & -1\\
				\frac{\gamma}{\rho_2} & 0 & 1 & 0
			\end{pmatrix},
		\end{equation*}
		and has eigenvalues $\pm i\left(1+\frac{\gamma}{\sqrt{\rho_1 \rho_2}}\right),\pm i\left(1-\frac{\gamma}{\sqrt{\rho_1 \rho_2}}\right)$. Then, the linear combination $A_L + 2 \gamma A_H$ has four different imaginary eigenvalues of the type $\pm ia,\pm ib$ (except when $\gamma$ is exactly $\sqrt{\rho_1\rho_2}$, but we just need to show that there exists one linear combination of $A_L$ and $A_H$ with this property). This implies that the two poles are non-degenerate singularities of elliptic-elliptic type.
		
		Now, let us identify the singular points of $F=(L,H)$ on $M^0=M\setminus\{((0, 0, \pm 1), (0, 0))\}$ where the rank of $dF$ is equal to $1$. On $M^0$, the differentials of $L$ and $H$ write as:
		\begin{equation*}
			\begin{cases}
				dL = -\rho_1 \frac{dz}{z} + \rho_2 \left(udu+vdv\right)\\
				dH = \frac{\sqrt{1-z^2}}{2}\left(\frac{-z^2}{1-z^2}\left(u\cos\theta + v\sin\theta\right) \frac{dz}{z}
				+ \left(-u\sin\theta + v\cos\theta\right) d\theta
				+ \cos\theta du
				+ \sin\theta dv\right)
			\end{cases}.
		\end{equation*}
		None of them vanishes on $M^0$ and, then, at the points where the rank of $dF$ is equal to $1$, $dL$ and $dH$ are linearly dependent and there exists $\mu \neq 0$ such that $\mu dL + dH = 0$ there. Following the same computation that we did in the proof of Proposition \ref{prop:singularitiesbcso}, we have the following equivalences:
		\begin{equation*}
			\mu dL + dH = 0 \iff
			\begin{cases}
				-\mu \rho_1 + \frac{-z^2}{1-z^2}\left(u\cos\theta + v\sin\theta\right) = 0\\
				-u\sin\theta + v\cos\theta= 0\\
				\mu \rho_2 u + \cos\theta = 0\\
				\mu \rho_2 v + \sin\theta = 0
			\end{cases}
			\iff
			\begin{cases}
				\frac{z^2}{1-z^2} = \mu^2 \rho_1 \rho_2\\
				\mu^2\rho_2^2(u^2+v^2)=1\\
				u\sin\theta = v\cos\theta
			\end{cases}.
		\end{equation*}
		The last system can be solved for any value of $\mu\neq 0$, and has no solution with $z=0$ or with $(u,v)=(0,0)$. The space of solutions can be parametrized using just $2$ parameters. Explicitly, the set $K_1$ of singular points of rank $1$ on $M^0$ is a $2$-dimensional submanifold that can be parametrized by $\theta\in[0,2\pi)$ and $z\in(-1,0)\cup(0,1)$ as:
		\begin{equation*}
			(u(\theta,z),v(\theta,z))=\pm\sqrt{\frac{\rho_1}{\rho_2}}\frac{\sqrt{1-z^2}}{z}\left(\cos\theta,\sin\theta\right).
		\end{equation*}
		Observe that, for any $(\theta,z)\in [0,2\pi)\times (-1,0)\cup(0,1)\subset\mathbb{S}^2$, there are two points $(u,v)\in\mathbb{R}^2$ that solve the system of equations. If we look at the northern hemisphere, where $\theta\in[0,2\pi)$ and $z\in(0,1)$, there are two of them, both emanating from the respective poles. In the southern hemisphere it is analogous and this means that the submanifold of singular points of rank $1$ is made of four connected components.
		
		In the singular points of rank $1$ the $b$-Hamiltonian vector fields $X_L$ and $X_H$ are parallel and their flows generate $\mathbb{S}^1$-orbits. Since, in $M^0$, we have
		\begin{equation}
			X_L= \frac{\partial}{\partial \theta} - v \frac{\partial}{\partial u} + u \frac{\partial}{\partial v}.
		\end{equation}
		The $\mathbb{S}^1$-orbit of a singular point $(z,\theta,u,v)$ of rank $1$ consists of all the other singular points of rank $1$ that can be obtained from $(z,\theta,u,v)$ by a simultaneous rotation of $(z,\theta)$ around the vertical axis of $\mathbb{S}^2$ and of $(u,v)$ around the origin of $\mathbb{R}^2$. The four families of $\mathbb{S}^1$-orbits, in coordinates $(z,\theta,u,v),$ are the following:
		
		\begin{align*}
			&\left(z,\theta,\sqrt{\frac{\rho_1}{\rho_2}}\frac{\sqrt{1-z^2}}{z}\cos\theta,\sqrt{\frac{\rho_1}{\rho_2}}\frac{\sqrt{1-z^2}}{z}\sin\theta \right)\quad&\quad z\in(0,1),\theta\in [0,2\pi),\\
			&\left(z,\theta,-\sqrt{\frac{\rho_1}{\rho_2}}\frac{\sqrt{1-z^2}}{z}\cos\theta,-\sqrt{\frac{\rho_1}{\rho_2}}\frac{\sqrt{1-z^2}}{z}\sin\theta\right)\quad&\quad z\in(0,1),\theta\in [0,2\pi),\\
			&\left(z,\theta,\sqrt{\frac{\rho_1}{\rho_2}}\frac{\sqrt{1-z^2}}{z}\cos\theta,\sqrt{\frac{\rho_1}{\rho_2}}\frac{\sqrt{1-z^2}}{z}\sin\theta\right)\quad&\quad z\in(-1,0),\theta\in [0,2\pi),\\
			&\left(z,\theta,-\sqrt{\frac{\rho_1}{\rho_2}}\frac{\sqrt{1-z^2}}{z}\cos\theta,-\sqrt{\frac{\rho_1}{\rho_2}}\frac{\sqrt{1-z^2}}{z}\sin\theta\right)\quad&\quad z\in(-1,0),\theta\in [0,2\pi).
		\end{align*}

	\end{proof}
	
	\begin{corollary}
		The image of the momentum map $(L,H) : S^2 \times \mathbb{R}^2 \to \mathbb{R}^2$ of the reversed spin-oscillator is the open region of $\mathbb{R}^2$ bounded by the origin and the two open lines emanating from there and corresponding to the images of the elliptic-regular singularities.
	\end{corollary}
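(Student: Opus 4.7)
The plan is to mimic, with the opposite sign of $\rho_1\log|z|$, the surjectivity argument used in the proof of Proposition~\ref{prop:mmbcsosurjective}, and then sharpen it to an identification of the image with a concrete planar region whose boundary matches the images of the elliptic-regular singular points identified in Proposition~\ref{prop:singularitiesbcsorev}.

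First I would observe that $L\ge 0$ everywhere, since $|z|\le 1$ forces $-\rho_1\log|z|\ge 0$ while $\tfrac{\rho_2}{2}(u^2+v^2)\ge 0$, with equality $L=0$ forcing simultaneously $|z|=1$ and $(u,v)=(0,0)$. In particular, the image meets the vertical axis $\{L=0\}$ only at the origin, and this point is attained by the two poles. Next, exactly as in Proposition~\ref{prop:mmbcsosurjective}, reducing to the subset where $(x,y)$ and $(u,v)$ are collinear is harmless for the question of attainability of $H$: using $|H|=\tfrac12|xu+yv|\le \tfrac12\sqrt{1-z^2}\,N$, with $N:=\sqrt{u^2+v^2}$ and equality in the collinear case, together with the continuous $S^1$-rotation of $(u,v)$ generated by the flow of $L$, every value of $H$ in the closed interval $[-\tfrac12\sqrt{1-z^2}\,N,\,\tfrac12\sqrt{1-z^2}\,N]$ is realized for fixed $(z,N)$.

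Second, I would fix $L=\ell>0$ and solve $\tfrac{\rho_2}{2}N^2 = \ell+\rho_1\log|z|$, which constrains $|z|\in[e^{-\ell/\rho_1},1]$. Substituting gives
\[
H^2 \;\le\; g(z)\;:=\;\frac{1-z^2}{2\rho_2}\bigl(\ell+\rho_1\log|z|\bigr),
\]
a function that vanishes at both endpoints of the interval and is continuous and positive in between, so it attains a maximum $M(\ell)>0$ at some interior $z_\ast$. Solving $g'(z_\ast)=0$ yields the relation $\ell+\rho_1\log|z_\ast|=\tfrac{\rho_1(1-z_\ast^2)}{2z_\ast^2}$, which is precisely the relation that, together with the collinearity $u\sin\theta=v\cos\theta$, cuts out the elliptic-regular locus found in the proof of Proposition~\ref{prop:singularitiesbcsorev}. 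By continuity of $g$ and by the $S^1$-argument from the first paragraph, the full range of $H$ attainable under the constraint $L=\ell$ is then the closed interval $[-\sqrt{M(\ell)},\sqrt{M(\ell)}]$.

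Finally I would identify the two bounding curves $H=\pm\sqrt{M(L)}$ with the moment-map images of the four families of elliptic-regular singularities from Proposition~\ref{prop:singularitiesbcsorev}. Plugging the parametrization $(u,v)=\pm\sqrt{\rho_1/\rho_2}\,\tfrac{\sqrt{1-z^2}}{z}(\cos\theta,\sin\theta)$ into $(L,H)$ yields
\[
L=-\rho_1\log|z|+\frac{\rho_1(1-z^2)}{2z^2},\qquad H=\pm\tfrac12\sqrt{\rho_1/\rho_2}\,\frac{1-z^2}{z},
\]
and one checks these curves trace out exactly $H=\pm\sqrt{M(L)}$ as $|z|$ varies in $(0,1)$, with both upper and lower signs of $z$ producing the same pair of curves. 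Taking the union with the origin (image of the two elliptic-elliptic fixed points), this gives the region described in the statement.

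The main obstacle I anticipate is the analysis of $g(z)$: one has to check that its interior critical point is unique (so that the boundary of the image is a single pair of curves, not a more complicated envelope), and that the critical equation indeed coincides with the $L$-coordinate of the elliptic-regular locus. Both are calculus computations carried out by differentiating $g$ and simplifying, but some care is needed to treat the two signs of $z$ symmetrically via $|z|$ and to match the $\pm$ sign of $H$ on the singular set with the sign ambiguity coming from collinearity.
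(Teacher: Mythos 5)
Your proposal is correct, but it proves the statement by a genuinely different and more self-contained route than the paper. The paper's proof consists only of evaluating $(L,H)$ on the rank-$1$ critical locus found in Proposition~\ref{prop:singularitiesbcsorev}, obtaining the boundary curves $L=-\rho_1\log|z|+\rho_1\frac{1-z^2}{2z^2}$, $H=\pm\sqrt{\rho_1/\rho_2}\,\frac{1-z^2}{2z}$, and then displaying the figure; the fact that the image is exactly the region enclosed by these curves is left implicit, resting on the same structural principle invoked in the corollary following Proposition~\ref{prop:mmbcsosurjective} (the boundary of the momentum-map image must consist of images of singular points). You instead compute the image directly: fixing $L=\ell\geq 0$, reducing to the collinear case via Cauchy--Schwarz and the $S^1$-rotation generated by $X_L$, and maximizing $H^2=g(z)=\frac{1-z^2}{2\rho_2}\bigl(\ell+\rho_1\log|z|\bigr)$ over the admissible interval $|z|\in[e^{-\ell/\rho_1},1]$; the observation that the critical equation $\ell+\rho_1\log|z_\ast|=\frac{\rho_1(1-z_\ast^2)}{2z_\ast^2}$ is exactly the equation cutting out the elliptic-regular locus ties your boundary to the paper's. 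Your approach buys a complete, elementary verification that every interior point of the region is attained and that nothing outside it is, which the paper does not spell out; the paper's approach is shorter but leans on semitoric structure theory. The one point you flag as a potential obstacle, uniqueness of the interior critical point of $g$, is immediate: on $(e^{-\ell/\rho_1},1)$ the left-hand side $\ell+\rho_1\log z$ is strictly increasing from $0$ to $\ell$ while the right-hand side $\frac{\rho_1(1-z^2)}{2z^2}$ is strictly decreasing to $0$, so they cross exactly once. The only cosmetic caveat is that your argument yields the closed region (boundary curves and origin included, as it must be since the singular points lie in $M$), whereas the statement's wording ``open region \dots two open lines'' is loose; your conclusion agrees with the figure.
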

	
	\begin{proof}
		In the critical points of rank $1$ of $F$ the functions $L$ and $H$ on $M^0$ are:
		\begin{equation}
			\begin{cases}
				L(z,\theta,u,v)= -\rho_1 \log\lvert z \rvert + \rho_1 \frac{1-z^2}{2z^2}\\
				H(z,\theta,u,v)=\pm\sqrt{\frac{\rho_1}{\rho_2}}\frac{1-z^2}{2z}
			\end{cases}.
		\end{equation}
		As expected, the value of the momentum map in these points does not depend on the value of $\theta$ because it is the same in all the points of an $\mathbb{S}^1$-orbit.
		
		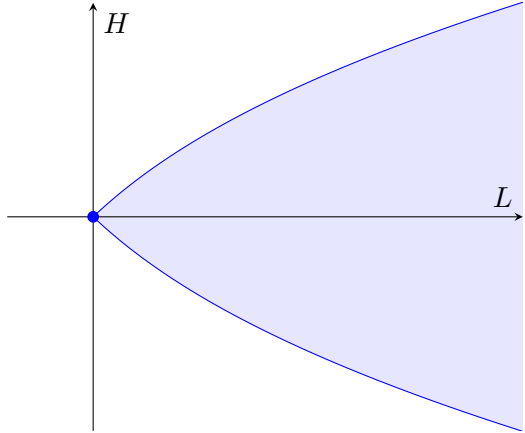
\begin{figure}[ht!]
			\begin{tikzpicture}
				\begin{axis}[ticks=none,
					axis y line=center,
					axis x line=middle,
					axis on top,xmin=-1,xmax=5,
					domain=0.2:1,xlabel=$L$,ylabel=$H$]
					\addplot[fill,mark=none,blue!10,samples=100]
					({-ln(x)+ (1-x^2)/(2*x^2)},{(1-x^2)/(2*x)}) \closedcycle;
					\addplot[mark=none,blue,samples=100]
					({-ln(x)+ (1-x^2)/(2*x^2)},{(1-x^2)/(2*x)});
					\addplot[fill,mark=none,blue!10,samples=100]
					({-ln(x)+ (1-x^2)/(2*x^2)},{-(1-x^2)/(2*x)}) \closedcycle;
					\addplot[mark=none,blue,samples=100]
					({-ln(x)+ (1-x^2)/(2*x^2)},{-(1-x^2)/(2*x)});
					\addplot[blue, mark=*,mark options={mark size=2pt}] coordinates {(0,0)};
				\end{axis}
			\end{tikzpicture}
			\caption{Image of the momentum map of the reversed $b$-coupled spin-oscillator. The blue dot is the image of the elliptic-elliptic singularities.}
			\label{fig:momentmapreversed}
		\end{figure}
		
	\end{proof}
	
	\begin{remark}
		The $b$-coupled spin-oscillator and the reversed $b$-coupled spin-oscillator are the two natural $b$-symplectic extensions of the classical coupled spin-oscillator formulated in Definition \ref{def:coupledspin-oscillator}.
	\end{remark}
	
	\section{The $b$-coupled angular momenta}
	\label{section:cam}
	
	There are three natural ways to construct a $b$-integrable system as a modification of the coupled angular momenta system $(L,H)$ defined on $(M=\mathbb{S}^2 \times \mathbb{S}^2,\omega=-(R_1 \omega_{\mathbb{S}^2} + R_2 \omega_{\mathbb{S}^2}))$ as seen in Section \ref{subsec:classicalcam}.
	All of them are obtained by selecting a hypersurface $Z\subset M$ and a $b$-symplectic structure on $M$ which is singular at $Z$, and by modifying the functions $L,H$ into $b$-functions compatible with the $b$-symplectic structure:
	
	\begin{itemize}
		\item System 1: We take $Z = \{z_1 = 0\}\subset M$, endow $M$ with the $b$-symplectic form $\omega=-(R_1 \frac{1}{z_1}\omega_{\mathbb{S}^2} + R_2 \omega_{\mathbb{S}^2})$ and modify $L$ to: $L(z_1,\theta_1,z_2,\theta_2) = R_1 \log|z_1| + R_2 z_2$.
		\item System 2: In the same setting as in the previous case, we also modify $H$ to: $H(z_1,\theta_1,z_2,\theta_2) = (1-t) \log|z_1| + t\left(\sqrt{(1 - z_1^2)(1 - z_2^2)} \cos(\theta_1 - \theta_2) + z_1 z_2\right)$.
		\item System 3: We take $Z = \{z_2 = 0\}\subset M$, endow $M$ with the $b$-symplectic form $\omega=-(R_1\omega_{\mathbb{S}^2} +  R_2 \frac{1}{z_2}\omega_{\mathbb{S}^2})$ and modify $L$ to: $L(z_1,\theta_1,z_2,\theta_2) = R_1 z_1 + R_2 \log|z_2|$.
	\end{itemize}
	
	Our goal in this section is to study these three $b$-integrable systems and classify their singularities. We start proving that all of them are indeed integrable and then we analyze separately for the three of them their singularities.
	
	\begin{lemma}
		Systems 1, 2 and 3 are $b$-integrable systems.
		\label{lem:camintsys}
	\end{lemma}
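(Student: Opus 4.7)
The plan is to verify the two requirements of Definition~\ref{def:bintegrablesystems}: the vanishing of the Poisson bracket $\{L,H\}$ for each system and the almost-everywhere maximality of the rank of $dF$ on $M$ and, separately, on $Z$. For each of the three systems I would work in the double-cylindrical chart $(\theta_1,z_1,\theta_2,z_2)$, since in those coordinates both the $b$-symplectic form and the modified $b$-functions have concrete pole or logarithmic expressions.

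\emph{Hamiltonian vector field of $L$.} For Systems~1 and~2 the $b$-symplectic form reads $\omega = -R_1\, d\theta_1\wedge\tfrac{dz_1}{z_1} - R_2\, d\theta_2\wedge dz_2$, and $dL = R_1\tfrac{dz_1}{z_1} + R_2\, dz_2$. Solving $\iota_{X_L}\omega = -dL$ (with the sign convention used in the proof of Lemma~\ref{lem:bcspintsys}), the singular factor $\tfrac{1}{z_1}$ of $\omega$ cancels exactly against the logarithmic derivative in $dL$, and one obtains $X_L = \partial_{\theta_1} + \partial_{\theta_2}$. The symmetric calculation for System~3 (with the roles of the two sphere factors swapped) yields the same vector field. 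Thus in all three cases the flow of $L$ is the simultaneous rotation of the two spheres around their vertical axes.

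\emph{Vanishing of the bracket.} It follows that $\{L,H\} = X_L(H) = \partial_{\theta_1}H + \partial_{\theta_2}H$. In each of the three systems the angular variables appear in $H$ only through the combination $\theta_1-\theta_2$ inside $\cos(\theta_1-\theta_2)$; in particular the extra summand $(1-t)\log|z_1|$ introduced in System~2 is angle-independent and does not disturb this symmetry. Hence $\partial_{\theta_1}H + \partial_{\theta_2}H = 0$ identically, and $\{L,H\} = 0$.

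\emph{Rank condition.} Away from $Z$ each of the three systems is a smooth perturbation of the classical coupled angular momenta, where the independence of $dL$ and $dH$ is known to hold off a nowhere-dense subset. On $Z$ itself the $b$-form $dL$ retains its nonvanishing $\tfrac{dz_i}{z_i}$ summand (with $i=1$ for Systems~1 and~2, and $i=2$ for System~3), while $dH$ contains the $\theta$-dependent term proportional to $\sin(\theta_1-\theta_2)\,(d\theta_1 - d\theta_2)$, which is nonzero on a dense open subset of $Z$. Hence $dL\wedge dH$ is a nonzero section of $\bigwedge^{2}{}^{b}T^{\ast}M$ almost everywhere on $Z$ as well, meeting the requirement of Definition~\ref{def:bintegrablesystems}. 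The only subtle point I expect is the cancellation in the computation of $X_L$; once this is observed, everything else reduces to the rotational symmetry of $H$ and a direct inspection of the differentials.
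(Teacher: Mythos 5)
Your proposal is correct and follows essentially the same route as the paper: it computes $X_L=\pm(\partial_{\theta_1}+\partial_{\theta_2})$ in the double cylindrical chart, deduces $\{L,H\}=0$ from the fact that $H$ depends on the angles only through $\theta_1-\theta_2$, and verifies the almost-everywhere independence of $dL$ and $dH$ by noting that $dL$ has only $dz$-components while $dH$ has a $d\theta$-component proportional to $\sin(\theta_1-\theta_2)$. The only loose phrase is the appeal to the systems being a ``smooth perturbation'' of the classical coupled angular momenta away from $Z$ --- that is neither literally true nor needed, since your own $dz$-versus-$d\theta$ argument works on all of $M$ off the measure-zero set $\{\sin(\theta_1-\theta_2)=0\}$ together with the poles and the locus not covered by the chart, which is exactly how the paper argues.
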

	
	\begin{proof}
		We check that the differentials $dL$ and $dH$ are independent almost everywhere and that $\{L,H\}=0$ on the whole $M$. We do it explicitly for System 1, since for System 2 and System 3 the computations can be reproduced analogously and yield the same results.
		
		First, in the double cylindrical charts $(z_1,\theta_1,z_2,\theta_2)\in U_1^0\times U_2^0$ introduced in Section \ref{subsec:classicalcam} $dL$ and $dH$ of System 1 are expressed as:
		\begin{equation}
			\left\{ \begin{array}{ll}
				dL(z_1,\theta_1,z_2,\theta_2) &= R_1 \frac{1}{z_1}dz_1 + R_2 dz_2 \\
				dH(z_1,\theta_1,z_2,\theta_2) &= \left(1 - t + tz_2 - t\frac{z_1}{\sqrt{1 - z_1^2}}\sqrt{1 - z_2^2} \cos(\theta_1 - \theta_2) \right) dz_1 \\
				& + t \left(z_1 -\frac{z_2}{\sqrt{1 - z_2^2}}\sqrt{1 - z_1^2} \cos(\theta_1 - \theta_2)\right)dz_2 \\
				& -t\sqrt{(1 - z_1^2)(1 - z_2^2)} \sin(\theta_1 - \theta_2) d\theta_1 \\
				& +t\sqrt{(1 - z_1^2)(1 - z_2^2)} \sin(\theta_1 - \theta_2) d\theta_2
			\end{array} \right. .
			\label{eq:case1diffscyl}
		\end{equation}
		
		The set $K\subset M$ of points where $dL$ and $dH$ are dependent is the union of three subsets, all of them of measure zero. The first one is just made by the four double poles, the only points of the system in which the differentials vanish simultaneously, as it can be checked in Cartesian coordinates. The second one consists of the points solving $\mu dL + \lambda dH = 0$ in Equation (\ref{eq:case1diffscyl}), which is a subset of $\{\theta_1=\theta_2\}\cup \{\theta_1=\theta_2+\pi\}$ and hence is a zero-measure set in $M$. The third one is made of a subset of the only part of $M$ not covered by the double Cartesian or the double cylindrical coordinates, i.e.\ the $2$-dimensional submanifolds $\{p_{+},p_{-}\}\times \mathbb{S}^2$ and $\mathbb{S}^2\times\{p_{+},p_{-}\}$, which is again of measure $0$. Therefore $dL\wedge dH \neq 0$ almost everywhere.

			Second, the computation of $\{L,H\}$ on $U_1^0\times U_2^0$ for System 1 yields:
			\begin{align*}
				\{L,H\} &= X_L(H) = \left(- \frac{\partial }{\partial \theta_1} - \frac{\partial }{\partial \theta_2}\right)\left((1 - t) z_1 + t\left(\sqrt{(1 - z_1^2)(1 - z_2^2)} \cos(\theta_1 - \theta_2) + z_1 z_2\right)\right) \\
				&= t\sqrt{(1 - z_1^2)(1 - z_2^2)}\sin(\theta_1 - \theta_2)- t\sqrt{(1 - z_1^2)(1 - z_2^2)}\sin(\theta_1 - \theta_2) = 0.
			\end{align*}
			By continuity of $\{L,H\}$, it is constantly $0$ on the entire manifold $M$.
		\end{proof}
		
		\begin{remark}
			For the sake of completeness, note that there is apparently another fourth equally natural option to change the classical coupled angular momenta into a $b$-integrable system, consisting of taking $Z$ as in Systems 1 and 2 and just modifying $H$ like in System 2, but this option does not really produce a $b$-integrable system. Indeed, if we take $Z = \{z_1 = 0\}$, $L(z_1,\theta_1,z_2,\theta_2) = R_1 z_1 + R_2 z_2$ and $H(x_1,y_1,z_1,x_2,y_2,z_2) = (1-t) \log|z_1| + t(x_1 x_2 + y_1 y_2 + z_1 z_2)$ then the system is not integrable.
			On an informal level we can make sense of this fact by taking into account that the vector field $X_L$ will no longer represent a rotation in this system.
			Explicitly, in $U_1^0\times U_2^0$,
			\begin{align*}
				\{L,H\} &= t (z_1 - 1) \sqrt{\left(1 - z_1^2\right) \left(1 - z_2^2\right)} \sin\left(\theta_1 - \theta_2\right) ,
			\end{align*}
			which is generically different from $0$.
		\end{remark}
		
		\subsection{Local analysis of the fixed points of the classical coupled angular momenta}
		\label{subsec:localcam}
		
		Before studying the critical points of the $3$ $b$-symplectic versions of the coupled angular momenta that we introduce in this section as $b$-integrable systems (cases (1), (2) and (3)), it will be useful to take a look at the local analysis of the already classified fixed points of the classical coupled angular momenta. 
		
		To determine the nature of the singular points of the $b$-symplectic systems, we will take advantage of the fact that the non-degeneracy and the type of a fixed point of an integrable system depend only on the local properties of the momentum map components up to second order (equivalently, at a fixed point of the system $(M,\omega,(H,L))$ the expressions $\omega$, $d^2L$ and $d^2H$ determine its type). More precisely, we will use that there exist similarities between the local forms at the fixed points of the $b$-integrable systems and the local forms at the fixed points of the classical coupled angular momenta. Since, as we mentioned, the latter ones are classified in function of the parameters $t,R_1,R_2$, using these similarities we will be able to characterize the former ones and to conclude about its type.
		
		For this purpose of analyzing the fixed points with the local expressions of the system, we include next the local expressions of $\Omega$, $d^2L$, $d^2H$, $\Omega^{-1}d^2L$, $\Omega^{-1}d^2H$ at the four fixed points $p_{\pm,\pm}$ of the classical coupled angular momenta. Later in this section we will compute the same local expressions at the fixed points of each of the $3$ $b$-integrable systems and we will find out that some of them coincide with these ones, meaning that the nature of the corresponding fixed points is the same.
		
		We do the computations for the four critical points $p_{\varepsilon_1,\varepsilon_2}$ simultaneously by using the combination of signs $\varepsilon_1,\varepsilon_2\in\{+,-\}$ corresponding to each fixed point. Namely, for each double pole $p_{\varepsilon_1,\varepsilon_2}=(0, 0, \varepsilon_1 1), (0, 0, \varepsilon_2 1)$ we use the Cartesian chart $(\varphi_1,U_1^{\varepsilon_1})\times (\varphi_2,U_2^{\varepsilon_2})$.
		
		At each double pole $p_{\varepsilon_1,\varepsilon_2}$, which corresponds to $(0,0,0,0)$ in coordinates $(x_1,y_1,x_2,y_2)$, we have:
		\begin{equation*}
			\Omega= 
			\begin{pmatrix}
				0 & -\varepsilon_1 R_1 & 0 & 0\\
				\varepsilon_1 R_1 & 0 & 0 & 0\\
				0 & 0 & 0 & -\varepsilon_2 R_2\\
				0 & 0 & \varepsilon_2 R_2 & 0
			\end{pmatrix},\quad\quad
			d^2L = 
			\begin{pmatrix}
				-\varepsilon_1 R_1 & 0 & 0 & 0\\
				0 & -\varepsilon_1 R_1 & 0 & 0\\
				0 & 0 & -\varepsilon_2 R_2 & 0\\
				0 & 0 & 0 & -\varepsilon_2 R_2
			\end{pmatrix},
		\end{equation*}
		\begin{equation*}
			d^2H= 
			\begin{pmatrix}
				\varepsilon_1(-1+t-\varepsilon_2 t) & 0 & t & 0\\
				0 & \varepsilon_1(-1+t-\varepsilon_2 t) & 0 & t\\
				t & 0 & -\varepsilon_1 \varepsilon_2 t & 0\\
				0 & t & 0 & -\varepsilon_1 \varepsilon_2 t
			\end{pmatrix}.
		\end{equation*}
		The matrices $d^2L$ and $d^2H$ are independent for any $t,\varepsilon_1,\varepsilon_2$ and give raise to $A_L^0:=\Omega^{-1}d^2L$ and $A_H^0:=\Omega^{-1}d^2H$, which have the expressions:
		\begin{equation*}
			A_L^0= 
			\begin{pmatrix}
				0 & -1 & 0 & 0\\
				1 & 0 & 0 & 0\\
				0 & 0 & 0 & -1\\
				0 & 0 & 1 & 0
			\end{pmatrix},\quad\quad
			A_H^0= 
			\begin{pmatrix}
				0 & \frac{-\varepsilon_2 t + t - 1}{R_1} & 0 & \frac{t}{\varepsilon_1 R_1} \\
				- \frac{-\varepsilon_2 t + t - 1}{R_1} & 0 & - \frac{t}{\varepsilon_1 R_1} & 0\\
				0 & \frac{t}{\varepsilon_2 R_2} & 0 & - \frac{\varepsilon_1 t}{R_2} \\
				-\frac{t}{\varepsilon_2 R_2} & 0 & \frac{\varepsilon_1 t}{R_2} & 0
			\end{pmatrix},
		\end{equation*}
		where the superscript $0$ just indicates that they correspond to the case of the classical coupled angular momenta. The linear combination $A^0:=A_L^0 + A_H^0$ has the form
		\begin{equation*}
			A^0=\begin{pmatrix}
				0 & \frac{-\varepsilon_2 t + t - 1}{R_1} -1 & 0 & \frac{t}{\varepsilon_1 R_1} \\
				- \frac{-\varepsilon_2 t + t - 1}{R_1} +1 & 0 & - \frac{t}{\varepsilon_1 R_1} & 0\\
				0 & \frac{t}{\varepsilon_2 R_2} & 0 & - \frac{\varepsilon_1 t}{R_2} -1\\
				-\frac{t}{\varepsilon_2 R_2} & 0 & \frac{\varepsilon_1 t}{R_2} +1 & 0
			\end{pmatrix}.
		\end{equation*}
		
		At each of the four poles $p_{\varepsilon_1,\varepsilon_2}$, $A^0$ is:
		\small
		\begin{equation*}
			A^0_{p_{+,+}}=\begin{pmatrix}
				0 & - \frac{1}{R_1} -1 & 0 & \frac{t}{R_1}\\
				\frac{1}{R_1} +1 & 0 & -\frac{t}{R_1} & 0\\
				0 & \frac{t}{R_2} & 0 & - \frac{t}{R_2} - 1\\
				-\frac{t}{R_2} & 0 & \frac{t}{R_2} + 1 & 0
			\end{pmatrix},
			A^0_{p_{+,-}}=\begin{pmatrix}
				0 & \frac{2t-1}{R_1} -1 & 0 & \frac{t}{R_1}\\
				-\frac{2t-1}{R_1} +1 & 0 & -\frac{t}{R_1} & 0\\
				0 & -\frac{t}{R_2} & 0 & - \frac{t}{R_2} -1\\
				\frac{t}{R_2} & 0 & \frac{t}{R_2} +1 & 0
			\end{pmatrix},
		\end{equation*}
		\begin{equation*}
			A^0_{p_{-,+}}=\begin{pmatrix}
				0 & -\frac{1}{R_1} -1 & 0 & -\frac{t}{R_1}\\
				\frac{1}{R_1} +1 & 0 & \frac{t}{R_1} & 0\\
				0 & \frac{t}{R_2} & 0 &  \frac{t}{R_2} -1\\
				-\frac{t}{R_2} & 0 & - \frac{t}{R_2} +1 & 0
			\end{pmatrix},
			A^0_{p_{-,-}}=\begin{pmatrix}
				0 & \frac{2t-1}{R_1} -1 & 0 & -\frac{t}{R_1}\\
				-\frac{2t-1}{R_1} +1 & 0 & \frac{t}{R_1} & 0\\
				0 & -\frac{t}{R_2} & 0 &  \frac{t}{R_2} -1\\
				\frac{t}{R_2} & 0 & - \frac{t}{R_2} +1 & 0
			\end{pmatrix}.
		\end{equation*}
		\normalsize
		And their characteristic polynomials are, respectively:
		
		\begin{align*}
			P^0_{+,+}(\lambda)=&\lambda^4 + \left( \frac{1}{R_1^2} + \frac{2(t^2 + R_2)}{R_1 R_2} + \frac{t (t + 2 R_2)}{R_2^2} + 2 \right)\lambda^2 + \frac{(-t^2 + t R_1 + t + R_1 R_2 + R_2)^2}{R_1^2 R_2^2},\\
			P^0_{+,-}(\lambda)=&\lambda^4 + \left( \frac{(1 - 2 t)^2}{R_1^2} + \frac{2(R_2 - t^2 - 2tR_2)}{R_1R_2} + \frac{t (t + 2 R_2)}{R_2^2} + 2 \right)\lambda^2 \\
			&+ \frac{(-t^2 + t R_1 - 2 t R_2 + t + R_1 R_2 + R_2)^2}{R_1^2 R_2^2},\\
			P^0_{-,+}(\lambda)=&\lambda^4 + \left(\frac{1}{R_1^2} + \frac{2(R_2 - t^2)}{R_1R_2} + \frac{t (t - 2 R_2)}{R_2^2} + 2 \right)\lambda^2 + \frac{(t^2 - t R_1 - t + R_1 R_2 + R_2)^2}{R_1^2 R_2^2},\\
			P^0_{-,-}(\lambda)=&\lambda^4 + \left( \frac{(1 - 2 t)^2}{R_1^2} + \frac{2 (R_2 + t^2 - 2 t R_2)}{R_1 R_2} + \frac{t (t - 2 R_2)}{R_2^2} + 2 \right)\lambda^2 \\
			&+ \frac{(-t^2 + t R_1 + 2 t R_2 + t - R_1 R_2 - R_2)^2}{R_1^2 R_2^2}.
		\end{align*}
		
		By the works on the classical coupled angular momenta (see Sadovksii and Zhilinskii~\cite{SadovskiiZhilinskii99}, Hohloch and Palmer~\cite{HohlochPalmer18}, Le Floch and Pelayo~\cite{LeFlochPelayo19}, Alonso, Dullin and Hohloch~\cite{AlonsoDullinHohloch20} and Alonso and Hohloch~\cite{AlonsoHohloch21}), the matrices $A^0_{p_{+,+}}, A^0_{p_{-,+}}, A^0_{p_{-,-}}$ have two pairs of imaginary eigenvalues and this shows that the non-degenerate fixed points $p_{+,+}, p_{-,+}, p_{-,-}$ are of elliptic-elliptic type. The matrix $A^0_{p_{+,-}}$ has two pairs of imaginary eigenvalues for $0\leq t<t^-$ or $1\geq t>t^+$ or four paired complex conjugate eigenvalues for $t^- < t < t^+$, which shows that $p_{+,-}$ is non-degenerate of elliptic-elliptic type or of focus-focus type, respectively. Also, for $t\in \{t^-,t^+\}$ $p_{+,-}$ is degenerate.
		
		\subsection{Critical points of System 1}
		We take $Z = \{z_1 = 0\}\subset M$, endow $M$ with the $b$-symplectic form $\omega=-(R_1 \frac{1}{z_1}\omega_{\mathbb{S}^2} + R_2 \omega_{\mathbb{S}^2})$ and modify $L$ to: $L(z_1,\theta_1,z_2,\theta_2) = R_1 \log|z_1| + R_2 z_2$.
		The expression of the $b$-symplectic form in cylindrical coordinates is
		\[\omega = R_1 \frac{dz_1}{z_1} \wedge d\theta_1 + R_2 dz_2 \wedge d\theta_2 ,\]
		and in Cartesian coordinates it is
		\[\omega = - \frac{R_1}{1 - x_1^2 - y_1^2} dx_1\wedge dy_1 - \varepsilon_2\frac{R_2}{\sqrt{1 - x_2^2 - y_2^2}} dx_2 \wedge dy_2 .\]
		The momenta in cylindrical coordinates now look like
		\[\left\{ \begin{array}{l} L(z_1,\theta_1,z_2,\theta_2) = R_1 \log|z_1| + R_2 z_2 \\ H(z_1,\theta_1,z_2,\theta_2) = (1 - t) z_1 + t\left(\sqrt{(1 - z_1^2)(1 - z_2^2)} \cos(\theta_1 - \theta_2) + z_1 z_2\right) \end{array} \right. ,\]
		and in Cartesian coordinates
		\[\left\{ \begin{array}{l} L(x_1,y_1,x_2,y_2) = \frac12 R_1 \log \left| 1 - x_1^2 - y_1^2 \right| + \varepsilon_2 R_2 \sqrt{1 - x_2^2 - y_2^2} \\ H(x_1,y_1,x_2,y_2) = \varepsilon_1(1-t) \sqrt{1 - x_1^2 - y_1^2} + t\left( x_1 x_2 + y_1 y_2 + \varepsilon_1\varepsilon_2\sqrt{(1 - x_1^2 - y_1^2) (1 - x_2^2 - y_2^2)} \right) \end{array} \right. .\]
		
		\begin{lemma}
			Let $\rho: \mathbb{S}^2 \times \mathbb{S}^2 \to \mathbb{S}^2 \times \mathbb{S}^2$ be the antipodal reflection in the first $\mathbb{S}^2$ component and the identity in the second $\mathbb{S}^2$ component.
			Then System 1 has the global symmetry $(L,H)=(L,-H)\circ\rho$.
			\label{lem:symmetrycase1}
		\end{lemma}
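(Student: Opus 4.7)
The plan is to verify the identity directly in cylindrical coordinates on $\mathbb{S}^2 \times \mathbb{S}^2$, where the antipodal map on the first sphere has a particularly clean expression. In the chart $(z_1, \theta_1, z_2, \theta_2) \in U_1^0 \times U_2^0$, the antipodal reflection in the first component reads $\rho(z_1,\theta_1,z_2,\theta_2) = (-z_1, \theta_1 + \pi, z_2, \theta_2)$, so the computation reduces to checking how the two coordinate substitutions $z_1 \mapsto -z_1$ and $\theta_1 \mapsto \theta_1 + \pi$ affect $L$ and $H$.

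First I would handle $L$. Since $L(z_1,\theta_1,z_2,\theta_2) = R_1 \log|z_1| + R_2 z_2$ depends on $z_1$ only through $|z_1|$ and does not involve $\theta_1$ at all, the substitution leaves $L$ invariant: $L \circ \rho = L$. Next I would tackle $H$. Writing
\[
H(z_1,\theta_1,z_2,\theta_2) = (1-t)z_1 + t\left(\sqrt{(1-z_1^2)(1-z_2^2)}\cos(\theta_1 - \theta_2) + z_1 z_2\right),
\]
I observe that $z_1 \mapsto -z_1$ flips the sign of the linear and bilinear terms while preserving $1 - z_1^2$, and $\theta_1 \mapsto \theta_1 + \pi$ flips the sign of $\cos(\theta_1 - \theta_2)$. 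Hence every term of $H$ changes sign under $\rho$, giving $H \circ \rho = -H$. Combining these two facts yields $(L,H) \circ \rho = (L,-H)$, which is the claimed identity (since $\rho$ is an involution, this is equivalent to the formulation in the statement).

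To make the argument global, I would note that the cylindrical chart $U_1^0 \times U_2^0$ covers all of $M$ except for the four double poles, and that $\rho$ obviously permutes these four poles in a way compatible with the identity: one checks directly in the Cartesian charts that $L$ and $H$ restricted to the poles also satisfy $L \circ \rho = L$ and $H \circ \rho = -H$ (at each pole both $L$ and $H$ are either preserved or negated by the sign flip in $(x_1,y_1,z_1)$). Alternatively, one appeals to continuity: the identity holds on the open dense set covered by the cylindrical chart, hence everywhere.

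There is no real obstacle here: the only point that requires attention is ensuring that $\rho$ is well defined as a smooth map on $M$ that covers the antipodal involution on the first factor and that it is compatible with our chart conventions (so that the substitution $\theta_1 \mapsto \theta_1 + \pi$ genuinely represents $\rho$ in cylindrical coordinates). Once this is granted, the symmetry is immediate from the explicit coordinate formulas for $L$ and $H$.
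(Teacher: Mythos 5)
Your proof is correct and follows essentially the same route as the paper: both verify the identity in the double cylindrical chart via the substitution $(z_1,\theta_1)\mapsto(-z_1,\theta_1+\pi)$, observe that $L$ is invariant and every term of $H$ changes sign, and extend to the four double poles by continuity. The remark that $\rho$ being an involution makes $(L,H)\circ\rho=(L,-H)$ equivalent to the stated form is a harmless reformulation.
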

		
		\begin{proof}
			This global symmetry is expressed, in double cylindrical coordinates, as $(L,H)(z_1,\theta_1,z_2,\theta_2)=(L,-H)(-z_1,\theta_1+\pi,z_2,\theta_2)$. Direct computation shows that
			\begin{align*}
				L(-z_1,\theta_1+\pi,z_2,\theta_2)=R_1 \log|-z_1| + R_2 z_2=R_1 \log|z_1| + R_2 z_2=L(z_1,\theta_1,z_2,\theta_2)
			\end{align*}
			and that
			\begin{align*}
				H(-z_1,\theta_1+\pi,z_2,\theta_2)&=(1 - t) (-z_1) + t\left(\sqrt{(1 - (-z_1)^2)(1 - z_2^2)} \cos(\theta_1 +\pi - \theta_2) + (-z_1) z_2\right)\\
				&=-(1 - t) z_1 - t\left(\sqrt{(1 - z_1^2)(1 - z_2^2)} \cos(\theta_1 - \theta_2) + z_1 z_2\right)=-H(z_1,\theta_1,z_2,\theta_2)
			\end{align*}
			on $U_1^0\times U_2^0$. By continuity of $L$ and $H$, this equality extends to $M$.
		\end{proof}
		
		\begin{remark}
			The global symmetry is expressed as $(L,H)(x_1,y_1,x_2,y_2)=(L,-H)(-x_1,-y_1,x_2,y_2)$ in Cartesian coordinates. In this case, one has to take into account that if $(-x_1,-y_1,x_2,y_2)$ is covered by the chart $(\varphi_1,U_1^{\varepsilon_1})\times (\varphi_2,U_2^{\varepsilon_2})$, then $(x_1,y_1,x_2,y_2)$ is covered by the chart $(\varphi_1,U_1^{-\varepsilon_1})\times (\varphi_2,U_2^{\varepsilon_2})$.
		\end{remark}
		
		\begin{corollary}
			Two points of the system $(L,H)$ in $M=\mathbb{S}^2 \times \mathbb{S}^2$ which are antipodal in the first $\mathbb{S}^2$ component of $M$ and have the same coordinates in the second $\mathbb{S}^2$ component of $M$ have the same rank, non-degeneracy and type.
			\label{cor:symmetrycase1}
		\end{corollary}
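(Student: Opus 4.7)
The plan is to deduce this corollary directly from the global symmetry $(L,H) = (L,-H)\circ\rho$ established in Lemma \ref{lem:symmetrycase1}, by tracking how the rank of $dF$ and the Cartan subalgebra generated by the Hamiltonian linearizations transform under $\rho$. Writing $F := (L,H)$, $p' := \rho(p)$, and letting $S : \mathbb{R}^2 \to \mathbb{R}^2$ denote the linear involution $S(\ell,h) = (\ell,-h)$, the lemma reads $F = S \circ F \circ \rho$. Differentiating gives $dF_p = S \circ dF_{p'} \circ d\rho_p$. Since both $S$ and $d\rho_p$ are linear isomorphisms, the ranks at $p$ and $p'$ coincide, which already disposes of the statement about rank.

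Next, I would verify that $\rho$ is a $b$-symplectomorphism of $(M,Z,\omega)$. In the cylindrical chart $U_1^0 \times U_2^0$ one has
\[\rho^*\omega = R_1\,\frac{d(-z_1)}{-z_1}\wedge d(\theta_1+\pi) + R_2\,dz_2\wedge d\theta_2 = \omega,\]
and this extends to all of $M$ by continuity. In particular, at any fixed point $p$ (which by Lemma \ref{lemma:nofixedbintegrable} lies in $M\setminus Z$), the matrix $\Omega$ of $\omega$ transforms by ordinary symplectic conjugation under $d\rho_p$.

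For non-degeneracy and Williamson type, the strategy is to exploit the invariance of the Cartan subalgebra $K(L,H) \subset \mathfrak{sp}(4,\mathbb{R})$ under both (i) conjugation by a symplectic linear isomorphism and (ii) sign changes of the generators. At a fixed point $p$, differentiating the identity $F = S\circ F\circ\rho$ twice yields $d^2L|_p = \rho^*d^2L|_{p'}$ and $d^2H|_p = -\rho^*d^2H|_{p'}$. Combined with the $b$-symplectomorphism property, the symplectic operators $A_L|_p$ and $A_H|_p$ are conjugate via $d\rho_p$ to $A_L|_{p'}$ and $-A_H|_{p'}$, respectively. Since $K(L,H)|_{p'}$ is the linear span of $A_L|_{p'}$ and $A_H|_{p'}$, and sign reversal of one generator does not alter the span, the subalgebras $K(L,H)|_p$ and $K(L,H)|_{p'}$ are conjugate in $\mathfrak{sp}(4,\mathbb{R})$. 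Therefore they fall into the same class among the four Williamson normal forms listed in \eqref{eq:classification_cartan}, which by Definition \ref{def:nondegfixedpoints} and the spectral classification recalled above gives the desired equality of non-degeneracy and type.

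The only point requiring care is the behavior of $\rho$ at $Z$: because $\rho$ interchanges the hemispheres $z_1>0$ and $z_1<0$, a Cartesian-coordinate argument must be made chart-by-chart, with the convention that if $p \in U_1^{\varepsilon_1}\times U_2^{\varepsilon_2}$ then $\rho(p) \in U_1^{-\varepsilon_1}\times U_2^{\varepsilon_2}$; this is a bookkeeping matter rather than a genuine obstacle. All other steps are purely algebraic consequences of the functional identity in Lemma \ref{lem:symmetrycase1}.
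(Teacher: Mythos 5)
Your argument is correct and follows essentially the same route as the paper: both deduce the corollary from the global symmetry of Lemma \ref{lem:symmetrycase1} together with the invariance of rank, non-degeneracy and type under the symplectomorphism $\rho$ and the regular linear change $(L,H)\mapsto(L,-H)$; the paper simply cites the invariance of the Bolsinov--Fomenko local normal form under such changes, where you instead verify $\rho^*\omega=\omega$ and the conjugacy of the span of $A_L$ and $A_H$ by hand. The only caveat is that your Cartan-subalgebra argument is phrased for fixed points, whereas the corollary (and the paper's normal-form appeal) also covers rank-$1$ singular points --- but the same span-plus-conjugation reasoning applies verbatim to the linearization transverse to the orbit there, and only the fixed points are used in the sequel anyway.
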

		
		\begin{proof}
			The local normal form of an integrable system $(f_1,\dots,f_n)$ at a neighbourhood of a point is invariant under regular linear changes of the functions $f_1,\dots,f_n$ (see Bolsinov and Fomenko~\cite{BolsinovFomenko04}).
			Since the rank, the non-degeneracy and the type of a point are determined by its local normal form, these features corresponding to a point $p\in\mathbb{S}^2 \times \mathbb{S}^2$ coincide with these same features of the point $\rho(p)\in\mathbb{S}^2 \times \mathbb{S}^2$.
		\end{proof}
		
		\begin{proposition}
			System 1 has $4$ fixed points at the double poles $p_{\pm,\pm}=((0,0,\pm1),(0,0,\pm1))\in\mathbb{S}^2\times\mathbb{S}^2$. The fixed points $p_{+,+}$ and, $p_{-,+}$ are non-degenerate of elliptic-elliptic type for all values of $t$, while $p_{+,-}$ and $p_{-,-}$ are non-degenerate and of elliptic-elliptic type if $t<t^-$ or $t>t^+$, of focus-focus type if $t^- < t < t^+$ and degenerate if $t\in\{t^-,t^+\}$, where $$t^\pm = \frac{R_2}{2R_2+R_1\mp 2\sqrt{R_1 R_2}}.$$
			\label{prop:singularitiescam1}
		\end{proposition}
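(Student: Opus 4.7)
The plan is to locate the fixed points via a chart-by-chart analysis and then read off their Williamson types by transferring the classical classification recalled in Section~\ref{subsec:localcam}, combining a direct chart comparison at the poles with $\varepsilon_1 = +1$ and the global symmetry of Corollary~\ref{cor:symmetrycase1} to handle the poles with $\varepsilon_1 = -1$.

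First I would verify that the fixed points are exactly the four double poles. In the double cylindrical chart $U_1^0\times U_2^0$ the $b$-form $dL = R_1\tfrac{dz_1}{z_1} + R_2\,dz_2$ is nowhere zero, so no rank-$0$ point lies there. In a Cartesian chart $U_1^{\varepsilon_1}\times U_2^{\varepsilon_2}$ the partial derivatives of $L = \tfrac12 R_1\log|1-x_1^2-y_1^2| + \varepsilon_2 R_2\sqrt{1-x_2^2-y_2^2}$ vanish simultaneously only at the origin, which corresponds to $p_{\varepsilon_1,\varepsilon_2}$, and $dH$ also vanishes there by a direct check. Mixed cylindrical--Cartesian charts contribute no new fixed points (the cylindrical factor keeps $dL$ nonvanishing), and the critical hypersurface $Z=\{z_1=0\}$ contains none by Lemma~\ref{lemma:nofixedbintegrable}. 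Hence the fixed points are exactly $p_{\pm,\pm}$.

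Next I would classify $p_{+,\varepsilon_2}$ by reducing to the classical analysis. In the Cartesian chart $U_1^+\times U_2^{\varepsilon_2}$, using $z_1 = \sqrt{1-x_1^2-y_1^2}$, the $b$-symplectic form evaluates at the pole as $\omega|_0 = -R_1\,dx_1\wedge dy_1 - \varepsilon_2 R_2\,dx_2\wedge dy_2$, which coincides with the classical $\Omega$ at $p_{+,\varepsilon_2}$ since $\varepsilon_1 R_1 = R_1$ when $\varepsilon_1=+1$. A direct Hessian computation gives $d^2L = \mathrm{diag}(-R_1,-R_1,-\varepsilon_2 R_2,-\varepsilon_2 R_2)$, matching the classical Hessian at $p_{+,\varepsilon_2}$, and $d^2H$ is identical to the classical one by construction. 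Consequently the linear combination $A_L + A_H$ coincides with its classical counterpart at $p_{+,\varepsilon_2}$, and its spectrum yields the Williamson type directly from Section~\ref{subsec:localcam}: $p_{+,+}$ is elliptic-elliptic for all $t$, and $p_{+,-}$ is elliptic-elliptic for $t \notin [t^-,t^+]$, focus-focus for $t \in (t^-,t^+)$, and degenerate at the boundary values.

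Finally, the global symmetry of Lemma~\ref{lem:symmetrycase1} and Corollary~\ref{cor:symmetrycase1} transfers each classification from $p_{+,\varepsilon_2}$ to $p_{-,\varepsilon_2}$, since $\rho(p_{-,\varepsilon_2}) = p_{+,\varepsilon_2}$ and changing the sign of one momentum component preserves the Williamson type. The trickiest part is the first paragraph: one must be careful about the $b$-form nature of $dL$ in the cylindrical and mixed charts and confirm that no rank-$0$ phenomenon sneaks in through the difference between smooth forms and $b$-forms; once this book-keeping is controlled, the remainder of the argument simply ports the classical result.
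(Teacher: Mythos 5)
Your proposal is correct and follows essentially the same route as the paper: identify the four double poles as the only rank-$0$ points, observe that at $\varepsilon_1=+1$ the data $\Omega$, $d^2L$, $d^2H$ (and hence $A_L+A_H$) coincide with the classical coupled angular momenta matrices $A^0_{p_{+,\pm}}$ from Section~\ref{subsec:localcam}, and then transfer the classification to $p_{-,\pm}$ via the symmetry of Corollary~\ref{cor:symmetrycase1}. Your first paragraph is somewhat more explicit than the paper about why no other fixed points exist, but the argument is the same in substance.
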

		
		\begin{proof}
			Singularities of the system correspond to points where the rank of $dF=(dL,dH)$ is lower than $2$. The rank of $dF$ is equal to $0$ only at the four double poles $p_{\pm,\pm}=((0, 0, \pm 1), (0, 0, \pm 1)) \in \mathbb{S}^2 \times \mathbb{S}^2$, meaning that they are the only fixed points of the system. To see that they are non-degenerate and to determine its type, we apply the local analysis in the same way that we did in Section \ref{subsec:localcam}. We do the computations for the four critical points $p_{\varepsilon_1,\varepsilon_2}$ simultaneously by using the combination of signs $\varepsilon_1,\varepsilon_2\in\{+,-\}$ corresponding to each double point, as we did in Section \ref{subsec:localcam}.
			
			At each double pole $p_{\varepsilon_1,\varepsilon_2}$, which corresponds to $(0,0,0,0)$ in coordinates $(x_1,y_1,x_2,y_2)$ in the chart $(\varphi_1,U_1^{\varepsilon_1})\times (\varphi_2,U_2^{\varepsilon_2})$, we have:
			\begin{equation*}
				\Omega= 
				\begin{pmatrix}
					0 & - R_1 & 0 & 0\\
					R_1 & 0 & 0 & 0\\
					0 & 0 & 0 & -\varepsilon_2 R_2\\
					0 & 0 & \varepsilon_2 R_2 & 0
				\end{pmatrix},\quad\quad
				d^2L = 
				\begin{pmatrix}
					- R_1 & 0 & 0 & 0\\
					0 & - R_1 & 0 & 0\\
					0 & 0 & -\varepsilon_2 R_2 & 0\\
					0 & 0 & 0 & -\varepsilon_2 R_2
				\end{pmatrix},
			\end{equation*}
			\begin{equation*}
				d^2H= 
				\begin{pmatrix}
					\varepsilon_1(-1+t-\varepsilon_2 t) & 0 & t & 0\\
					0 & \varepsilon_1(-1+t-\varepsilon_2 t) & 0 & t\\
					t & 0 & -\varepsilon_1 \varepsilon_2 t & 0\\
					0 & t & 0 & -\varepsilon_1 \varepsilon_2 t
				\end{pmatrix}.
			\end{equation*}
			The matrices $d^2L$ and $d^2H$ are independent for any $t,\varepsilon_1,\varepsilon_2$ and give raise to $A^1_L:=\Omega^{-1}d^2L$ and $A^1_H:=\Omega^{-1}d^2H$, which have the expressions:
			\begin{equation*}
				A^1_L = 
				\begin{pmatrix}
					0 & -1 & 0 & 0\\
					1 & 0 & 0 & 0\\
					0 & 0 & 0 & -1\\
					0 & 0 & 1 & 0
				\end{pmatrix},\quad\quad
				A^1_H = 
				\begin{pmatrix}
					0 & \frac{\varepsilon_1(-\varepsilon_2 t + t - 1)}{R_1} & 0 & \frac{t}{R_1} \\
					- \frac{\varepsilon_1(-\varepsilon_2 t + t - 1)}{R_1} & 0 & - \frac{t}{R_1} & 0\\
					0 & \frac{t}{\varepsilon_2 R_2} & 0 & - \frac{\varepsilon_1 t}{R_2} \\
					- \frac{t}{\varepsilon_2 R_2} & 0 & \frac{\varepsilon_1 t}{R_2} & 0
				\end{pmatrix}.
			\end{equation*}
			The linear combination $A^1:=A^1_L + A^1_H$ has the form
			\begin{equation*}
				A^1=\begin{pmatrix}
					0 & \frac{\varepsilon_1(-\varepsilon_2 t + t - 1)}{R_1} -1 & 0 & \frac{t}{R_1} \\
					- \frac{\varepsilon_1(-\varepsilon_2 t + t - 1)}{R_1} +1 & 0 & - \frac{t}{R_1} & 0\\
					0 & \frac{t}{\varepsilon_2 R_2} & 0 & - \frac{\varepsilon_1 t}{R_2} -1\\
					- \frac{t}{\varepsilon_2 R_2} & 0 & \frac{\varepsilon_1 t}{R_2} +1 & 0
				\end{pmatrix}.
			\end{equation*}
			
			At each of the four poles, $A^1$ is:
			
			\small
			\begin{equation*}
				A^1_{p_{+,+}}=\begin{pmatrix}
					0 & - \frac{1}{R_1} -1 & 0 & \frac{t}{R_1}\\
					\frac{1}{R_1} +1 & 0 & -\frac{t}{R_1} & 0\\
					0 & \frac{t}{R_2} & 0 & - \frac{t}{R_2} - 1\\
					-\frac{t}{R_2} & 0 & \frac{t}{R_2} + 1 & 0
				\end{pmatrix},
				A^1_{p_{+,-}}=\begin{pmatrix}
					0 & \frac{2t-1}{R_1} -1 & 0 & \frac{t}{R_1}\\
					-\frac{2t-1}{R_1} +1 & 0 & -\frac{t}{R_1} & 0\\
					0 & -\frac{t}{R_2} & 0 & - \frac{t}{R_2} -1\\
					\frac{t}{R_2} & 0 & \frac{t}{R_2} +1 & 0
				\end{pmatrix},
			\end{equation*}
			\begin{equation*}
				A^1_{p_{-,+}}=\begin{pmatrix}
					0 & \frac{1}{R_1} -1 & 0 & \frac{t}{R_1}\\
					-\frac{1}{R_1} +1 & 0 & -\frac{t}{R_1} & 0\\
					0 & \frac{t}{R_2} & 0 &  \frac{t}{R_2} -1\\
					-\frac{t}{R_2} & 0 & - \frac{t}{R_2} +1 & 0
				\end{pmatrix},
				A^1_{p_{-,-}}=\begin{pmatrix}
					0 & -\frac{2t-1}{R_1} -1 & 0 & \frac{t}{R_1}\\
					\frac{2t-1}{R_1} +1 & 0 & -\frac{t}{R_1} & 0\\
					0 & -\frac{t}{R_2} & 0 &  \frac{t}{R_2} -1\\
					\frac{t}{R_2} & 0 & - \frac{t}{R_2} +1 & 0
				\end{pmatrix}.
			\end{equation*}
			\normalsize
			
			Observe that $A^1_{p_{+,+}}$ is equal to $A^0_{p_{+,+}}$ from Section \ref{subsec:localcam}. Then, for all values of $t$, $p_{+,+}$ is also a non-degenerate fixed point of elliptic-elliptic type in this system. Similarly, since $A^1_{p_{+,-}}$ is equal to $A^0_{p_{+,-}}$, the fixed point $p_{+,-}$ is non-degenerate and of elliptic-elliptic type if $t<t^-$ or $t>t^+$, of focus-focus type for $t^- < t < t^+$ and degenerate for $t\in\{t^-,t^+\}$, where
			\begin{equation*}
				t^\pm = \frac{R_2}{2R_2+R_1\mp 2\sqrt{R_1 R_2}}.
			\end{equation*}
			
			On the other hand, by Corollary \ref{cor:symmetrycase1}, since $p_{-,+}$ is symmetric to $p_{+,+}$, it is of the same type, so it is a non-degenerate fixed point of elliptic-elliptic type. Similarly, $p_{-,-}$ is symmetric to $p_{+,-}$, so it is non-degenerate and of elliptic-elliptic type if $t<t^-$ or $t>t^+$, of focus-focus type if $t^- < t < t^+$ and degenerate if $t\in\{t^-,t^+\}$.
		\end{proof}
		
		
		\begin{figure}[ht!]
			\vspace{10mm}
			\centering
			\includegraphics[scale=0.17]{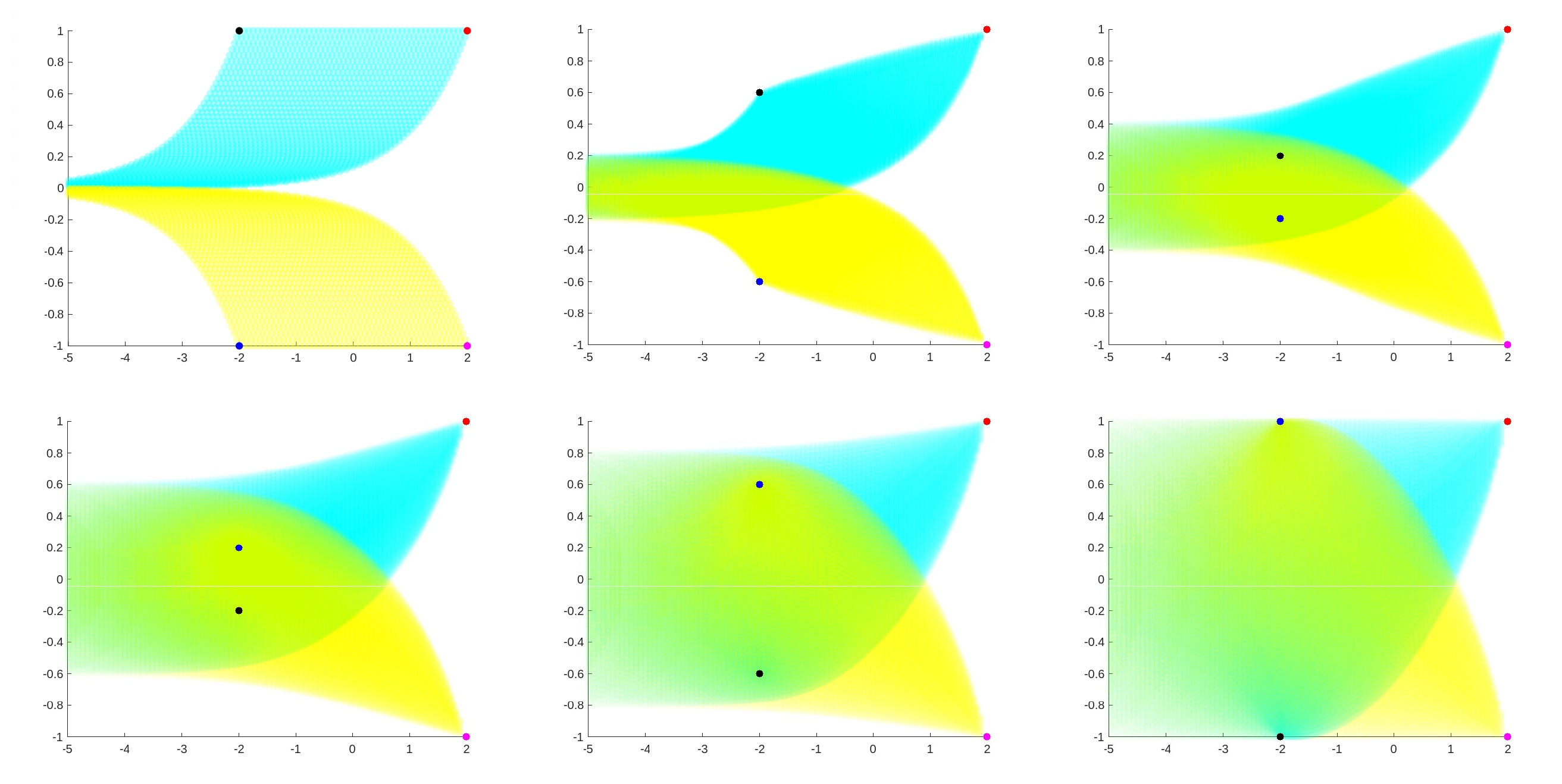}
			\caption{Image of the momentum map of System $1$ for values of $t$ between $0$ (top left) and $1$ (bottom right) by intervals of $0.2$. The images of the four fixed points $p_{+,+},p_{+,-},p_{-,+},p_{-,-}$ are depicted, respectively, in red, black, magenta and blue. The image of the submanifold $\mathbb{S}^+\times \mathbb{S}^2$ is depicted in sky blue and the image of the submanifold $\mathbb{S}^-\times \mathbb{S}^2$ is depicted in yellow, and both of them have been obtained numerically. Note that the image of the momentum map of a $b$-semitoric system is not necessarily convex (this also true for semitoric systems, see the example of the Hirzeburg surface in the work of Le Floch and Palmer~\cite{LeFlochPalmer2018}).}
			\label{fig:cam1}
		\end{figure}
		
		\subsection{Critical points of System 2}
		We take again $Z = \{z_1 = 0\}\subset M$, endow $M$ with the $b$-symplectic form $\omega=-(R_1 \frac{1}{z_1}\omega_{\mathbb{S}^2} + R_2 \omega_{\mathbb{S}^2})$, modify $L$ to: $L(z_1,\theta_1,z_2,\theta_2) = R_1 \log|z_1| + R_2 z_2$ and modify $H$ to: $H(z_1,\theta_1,z_2,\theta_2) = (1-t) \log|z_1| + t\left(\sqrt{(1 - z_1^2)(1 - z_2^2)} \cos(\theta_1 - \theta_2) + z_1 z_2\right)$.
		
		The expression of the $b$-symplectic form in cylindrical coordinates is
		\[\omega = R_1 \frac{dz_1}{z_1} \wedge d\theta_1 + R_2 dz_2 \wedge d\theta_2 ,\]
		and in Cartesian coordinates it is
		\[\omega = - \frac{R_1}{1 - x_1^2 - y_1^2} dx_1\wedge dy_1 - \varepsilon_2\frac{R_2}{\sqrt{1 - x_2^2 - y_2^2}} dx_2 \wedge dy_2 .\]
		
		The expressions of $(L,H)$ in cylindrical coordinates are
		\[\left\{ \begin{array}{l} L(z_1,\theta_1,z_2,\theta_2) = R_1 \log|z_1| + R_2 z_2 \\ H(z_1,\theta_1,z_2,\theta_2) = (1 - t) \log|z_1| + t\left(\sqrt{(1 - z_1^2)(1 - z_2^2)} \cos(\theta_1 - \theta_2) + z_1 z_2\right) \end{array} \right. ,\]
		and in Cartesian coordinates
		\[\left\{ \begin{array}{l} L(x_1,y_1,x_2,y_2) = \frac12 R_1 \log \left| 1 - x_1^2 - y_1^2 \right| + \varepsilon_2 R_2 \sqrt{1 - x_2^2 - y_2^2} \\ H(x_1,y_1,x_2,y_2) = (1-t) \frac12 \log{\left|1 - x_1^2 - y_1^2\right|} + t\left( x_1 x_2 + y_1 y_2 + \varepsilon_1\varepsilon_2 \sqrt{(1 - x_1^2 - y_1^2) (1 - x_2^2 - y_2^2)} \right) \end{array} \right. .\]
		
		\begin{proposition}
			System 2 has $4$ fixed points at the double poles $p_{\pm,\pm}=((0,0,\pm1),(0,0,\pm1))\in\mathbb{S}^2\times\mathbb{S}^2$. The fixed points $p_{+,+}$, $p_{-,+}$ and $p_{-,-}$ are non-degenerate of elliptic-elliptic type for all values of $t$, while $p_{+,-}$ is non-degenerate and of elliptic-elliptic type if $t<t^-$ or $t>t^+$, of focus-focus type if $t^- < t < t^+$ and degenerate if $t\in\{t^-,t^+\}$, where $$t^\pm = \frac{R_2}{2R_2+R_1\mp 2\sqrt{R_1 R_2}}.$$
			\label{prop:singularitiescam2}
		\end{proposition}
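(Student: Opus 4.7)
The plan is to follow the same strategy as in the proof of Proposition~\ref{prop:singularitiescam1}, exploiting the fact that System~2 differs from System~1 only in the component $H$. First I would verify that the fixed points of $F=(L,H)$ are exactly the four double poles $p_{\pm,\pm}$. Since $L$ is unchanged from System~1, the argument there carries over verbatim: in the Cartesian chart, the only points where $dL$ and $dH$ vanish simultaneously are the four double poles.

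Next I would compute, at each $p_{\varepsilon_1,\varepsilon_2}$ in the chart $(\varphi_1,U_1^{\varepsilon_1})\times(\varphi_2,U_2^{\varepsilon_2})$, the matrices $\Omega$, $d^2 L$, $d^2 H$ and then $A^2 := A^2_L + A^2_H$, where $A^2_L := \Omega^{-1} d^2 L$ and $A^2_H := \Omega^{-1} d^2 H$. The matrices $\Omega$ and $d^2 L$ coincide with those of System~1. A second-order Taylor expansion of the new $H$ around the double pole yields $(d^2H)_{11}=(d^2H)_{22} = -(1-t) - t\varepsilon_1\varepsilon_2$, whereas the corresponding entry for System~1 and for the classical system of Section~\ref{subsec:localcam} was $\varepsilon_1(-(1-t)-\varepsilon_2 t)$; the remaining entries of $d^2 H$ stay the same as in System~1.

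The key observation is that these two diagonal expressions agree exactly when $\varepsilon_1=+1$. Consequently $A^2_{p_{+,+}}=A^0_{p_{+,+}}$ and $A^2_{p_{+,-}}=A^0_{p_{+,-}}$, so the classification recalled in Section~\ref{subsec:localcam} transfers verbatim: $p_{+,+}$ is non-degenerate of elliptic-elliptic type for every $t$, and $p_{+,-}$ undergoes the elliptic-elliptic $\to$ focus-focus $\to$ elliptic-elliptic transition at the values $t^\pm = R_2/(2R_2+R_1\mp 2\sqrt{R_1 R_2})$, being degenerate exactly at $t\in\{t^-,t^+\}$.

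For the remaining poles $p_{-,+}$ and $p_{-,-}$ the linearization genuinely differs from the classical one, and no symmetry analogous to Lemma~\ref{lem:symmetrycase1} is available because $(1-t)\log|z_1|$ is invariant, not anti-invariant, under $z_1\mapsto -z_1$. In both cases the matrix $A^2$ takes the form $\bigl(\begin{smallmatrix} 0 & \alpha & 0 & \gamma \\ -\alpha & 0 & -\gamma & 0 \\ 0 & \delta & 0 & \beta \\ -\delta & 0 & -\beta & 0 \end{smallmatrix}\bigr)$, which after the reordering $(e_2,e_4,e_1,e_3)$ becomes block-antidiagonal $\bigl(\begin{smallmatrix} 0 & -B \\ B & 0 \end{smallmatrix}\bigr)$ with $B=\bigl(\begin{smallmatrix}\alpha & \gamma \\ \delta & \beta\end{smallmatrix}\bigr)$. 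Its eigenvalues are then $\pm i\lambda$ with $\lambda$ running over the eigenvalues of $B$, and the point is non-degenerate elliptic-elliptic precisely when the discriminant $(\alpha-\beta)^2+4\gamma\delta$ of the characteristic polynomial of $B$ is strictly positive (together with $\mathrm{tr}\,B\neq 0$). At $p_{-,+}$ one finds $\gamma\delta=t^2/(R_1 R_2)\geq 0$, so the discriminant is automatically positive. The main obstacle is $p_{-,-}$, where $\gamma\delta=-t^2/(R_1 R_2)<0$ and positivity of the discriminant amounts to the inequality $(R_1 t + R_2)^2 \geq 4 R_1 R_2\, t^2$; by AM-GM this holds strictly for $t\in[0,1)$, and at $t=1$ it reduces to $(R_1-R_2)^2>0$, guaranteed by the standing assumption $R_1<R_2$.
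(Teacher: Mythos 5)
Your overall strategy matches the paper's up to the poles with $\varepsilon_1=+1$: you identify the fixed points, compute $\Omega$, $d^2L$, $d^2H$ in the Cartesian charts, observe that the linearized data coincide with the classical coupled angular momenta exactly when $\varepsilon_1=+1$, and import the known classification for $p_{+,+}$ and $p_{+,-}$. That part is correct and is what the paper does. For $p_{-,+}$ and $p_{-,-}$, however, you replace the paper's argument (which matches the characteristic polynomial of $A^2$ at these poles with $P^0_{-,-}$ and $P^0_{-,+}$ of the classical system and then invokes the literature) by a direct eigenvalue analysis of the block form $\bigl(\begin{smallmatrix} 0 & -B \\ B & 0\end{smallmatrix}\bigr)$, and there your argument has a genuine gap.

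You correctly state that your criterion needs \emph{both} a positive discriminant of $B$ \emph{and} $\mathrm{tr}\,B\neq 0$ (and in fact also $\det B\neq 0$, which you omit: a zero eigenvalue of $B$ gives a double zero eigenvalue of $A^2$, which is not elliptic-elliptic). At $p_{-,+}$ you verify only the discriminant and never address the trace, and the trace condition actually fails for admissible parameters: there $\alpha+\beta=\frac{2t-1}{R_1}+\frac{t}{R_2}-2$, which vanishes e.g.\ at $t=1$ whenever $\frac{1}{R_1}+\frac{1}{R_2}=2$ with $0<R_1<R_2$ (say $R_1=\frac23$, $R_2=2$). At such parameters one computes $\det B=-1$, so $B$ has eigenvalues $\pm 1$ and $A^2_{p_{-,+}}$ has eigenvalues $\pm i$ each with multiplicity two; your criterion then yields no conclusion. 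The point is still non-degenerate elliptic-elliptic, but to see it you must either pass to a different element $c_1A_L^2+c_2A_H^2$ of the pencil or, as the paper does, note that the characteristic polynomial of $A^2_{p_{-,+}}$ equals $P^0_{-,-}(\lambda)$ and appeal to the established classification of the classical system. Your treatment of $p_{-,-}$ (where $\gamma\delta<0$ and the discriminant inequality $(R_1t+R_2)^2>4R_1R_2t^2$ holds for $t\in[0,1]$ since $R_1\neq R_2$) is fine, since there one checks $\mathrm{tr}\,B=-\frac{1}{R_1}+\frac{t}{R_2}-2<0$ automatically, but you should say so explicitly and also rule out $\det B=0$.
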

		
		\begin{proof}
			The only fixed points of the system are the four double poles $p_{\pm,\pm}=((0, 0, \pm 1), (0, 0, \pm 1)) \in \mathbb{S}^2 \times \mathbb{S}^2$. At each fixed point $p_{\varepsilon_1,\varepsilon_2}$, which corresponds to $(0,0,0,0)$ in coordinates $(x_1,y_1,x_2,y_2)$ in the chart $(\varphi_1,U_1^{\varepsilon_1})\times (\varphi_2,U_2^{\varepsilon_2})$, we have:
			\begin{equation*}
				\Omega= 
				\begin{pmatrix}
					0 & - R_1 & 0 & 0\\
					R_1 & 0 & 0 & 0\\
					0 & 0 & 0 & -\varepsilon_2 R_2\\
					0 & 0 & \varepsilon_2 R_2 & 0
				\end{pmatrix},\quad\quad
				d^2L = 
				\begin{pmatrix}
					- R_1 & 0 & 0 & 0\\
					0 & - R_1 & 0 & 0\\
					0 & 0 & -\varepsilon_2 R_2 & 0\\
					0 & 0 & 0 & -\varepsilon_2 R_2
				\end{pmatrix},
			\end{equation*}
			\begin{equation*}
				d^2H= 
				\begin{pmatrix}
					-1+t-\varepsilon_1\varepsilon_2 t & 0 & t & 0\\
					0 & -1+t-\varepsilon_1\varepsilon_2 t & 0 & t\\
					t & 0 & -\varepsilon_1 \varepsilon_2 t & 0\\
					0 & t & 0 & -\varepsilon_1 \varepsilon_2 t
				\end{pmatrix}.
			\end{equation*}
			The matrices $d^2L$ and $d^2H$ are independent for any $t,\varepsilon_1,\varepsilon_2$ and give raise to $A^2_L:=\Omega^{-1}d^2L$ and $A^2_H:=\Omega^{-1}d^2H$, which have the expressions:
			\begin{equation*}
				A^2_L= 
				\begin{pmatrix}
					0 & -1 & 0 & 0\\
					1 & 0 & 0 & 0\\
					0 & 0 & 0 & -1\\
					0 & 0 & 1 & 0
				\end{pmatrix},\quad\quad
				A^2_H= 
				\begin{pmatrix}
					0 & \frac{-\varepsilon_1\varepsilon_2 t + t - 1}{R_1} & 0 & \frac{t}{R_1} \\
					- \frac{-\varepsilon_1\varepsilon_2 t + t - 1}{R_1} & 0 & - \frac{t}{R_1} & 0\\
					0 & \frac{t}{\varepsilon_2 R_2} & 0 & - \frac{\varepsilon_1 t}{R_2} \\
					- \frac{t}{\varepsilon_2 R_2} & 0 & \frac{\varepsilon_1 t}{R_2} & 0
				\end{pmatrix}.
			\end{equation*}
			The linear combination $A^2:=A^2_L + A^2_H$ has the form
			\begin{equation*}
				A^2=\begin{pmatrix}
					0 & \frac{-\varepsilon_1\varepsilon_2 t + t - 1}{R_1} -1 & 0 & \frac{t}{R_1} \\
					- \frac{-\varepsilon_1\varepsilon_2 t + t - 1}{R_1} +1 & 0 & - \frac{t}{R_1} & 0\\
					0 & \frac{t}{\varepsilon_2 R_2} & 0 & - \frac{\varepsilon_1 t}{R_2} -1\\
					- \frac{t}{\varepsilon_2 R_2} & 0 & \frac{\varepsilon_1 t}{R_2} +1 & 0
				\end{pmatrix}.
			\end{equation*}
			
			At each of the four poles, $A^2$ is:
			\small
			\begin{equation*}
				A^2_{p_{+,+}}=\begin{pmatrix}
					0 & - \frac{1}{R_1} -1 & 0 & \frac{t}{R_1}\\
					\frac{1}{R_1} +1 & 0 & -\frac{t}{R_1} & 0\\
					0 & \frac{t}{R_2} & 0 & - \frac{t}{R_2} - 1\\
					-\frac{t}{R_2} & 0 & \frac{t}{R_2} + 1 & 0
				\end{pmatrix},
				A^2_{p_{+,-}}=\begin{pmatrix}
					0 & \frac{2t-1}{R_1} -1 & 0 & \frac{t}{R_1}\\
					-\frac{2t-1}{R_1} +1 & 0 & -\frac{t}{R_1} & 0\\
					0 & -\frac{t}{R_2} & 0 & - \frac{t}{R_2} -1\\
					\frac{t}{R_2} & 0 & \frac{t}{R_2} +1 & 0
				\end{pmatrix},
			\end{equation*}
			\begin{equation*}
				A^2_{p_{-,+}}=\begin{pmatrix}
					0 & \frac{2t-1}{R_1} -1 & 0 & \frac{t}{R_1}\\
					-\frac{2t-1}{R_1} +1 & 0 & -\frac{t}{R_1} & 0\\
					0 & \frac{t}{R_2} & 0 &  \frac{t}{R_2} -1\\
					-\frac{t}{R_2} & 0 & - \frac{t}{R_2} +1 & 0
				\end{pmatrix},
				A^2_{p_{-,-}}=\begin{pmatrix}
					0 & - \frac{1}{R_1} -1 & 0 & \frac{t}{R_1}\\
					\frac{1}{R_1} +1 & 0 & -\frac{t}{R_1} & 0\\
					0 & -\frac{t}{R_2} & 0 &  \frac{t}{R_2} -1\\
					\frac{t}{R_2} & 0 & - \frac{t}{R_2} +1 & 0
				\end{pmatrix}.
			\end{equation*}
			\normalsize
			
			Since $A^2_{p_{+,+}}$ is identical to $A^0_{p_{+,+}}$ from Section \ref{subsec:localcam}, $p_{+,+}$ is non-degenerate of elliptic-elliptic type for all values of $t$. Similarly, since $A^2_{p_{+,-}}$ is identical to $A^0_{p_{+,-}}$, $p_{+,-}$ is non-degenerate and of elliptic-elliptic type if $t<t^-$ or $t>t^+$, of focus-focus type if $t^- < t < t^+$ and degenerate if $t\in\{t^-,t^+\}$, where
			\begin{equation*}
				t^\pm = \frac{R_2}{2R_2+R_1\mp 2\sqrt{R_1 R_2}}.
			\end{equation*}
			On the other hand, by direct computation one can see that the characteristic polynomial of $A^2_{p_{-,+}}$ coincides with $P^0_{-,-}(\lambda)$ defined in Section \ref{subsec:localcam}. Then, $p_{-,+}$ is a non-degenerate fixed point of elliptic-elliptic type for all values of $t$. Similarly, the characteristic polynomial of $A^2_{p_{-,-}}$ coincides with $P^0_{-,+}(\lambda)$ of Section \ref{subsec:localcam}, and then $p_{-,-}$ is also a non-degenerate fixed point of elliptic-elliptic type for all values of $t$.
		\end{proof}
		
		\begin{figure}[ht!]
			\vspace{20mm}
			\centering
			\includegraphics[scale=0.17]{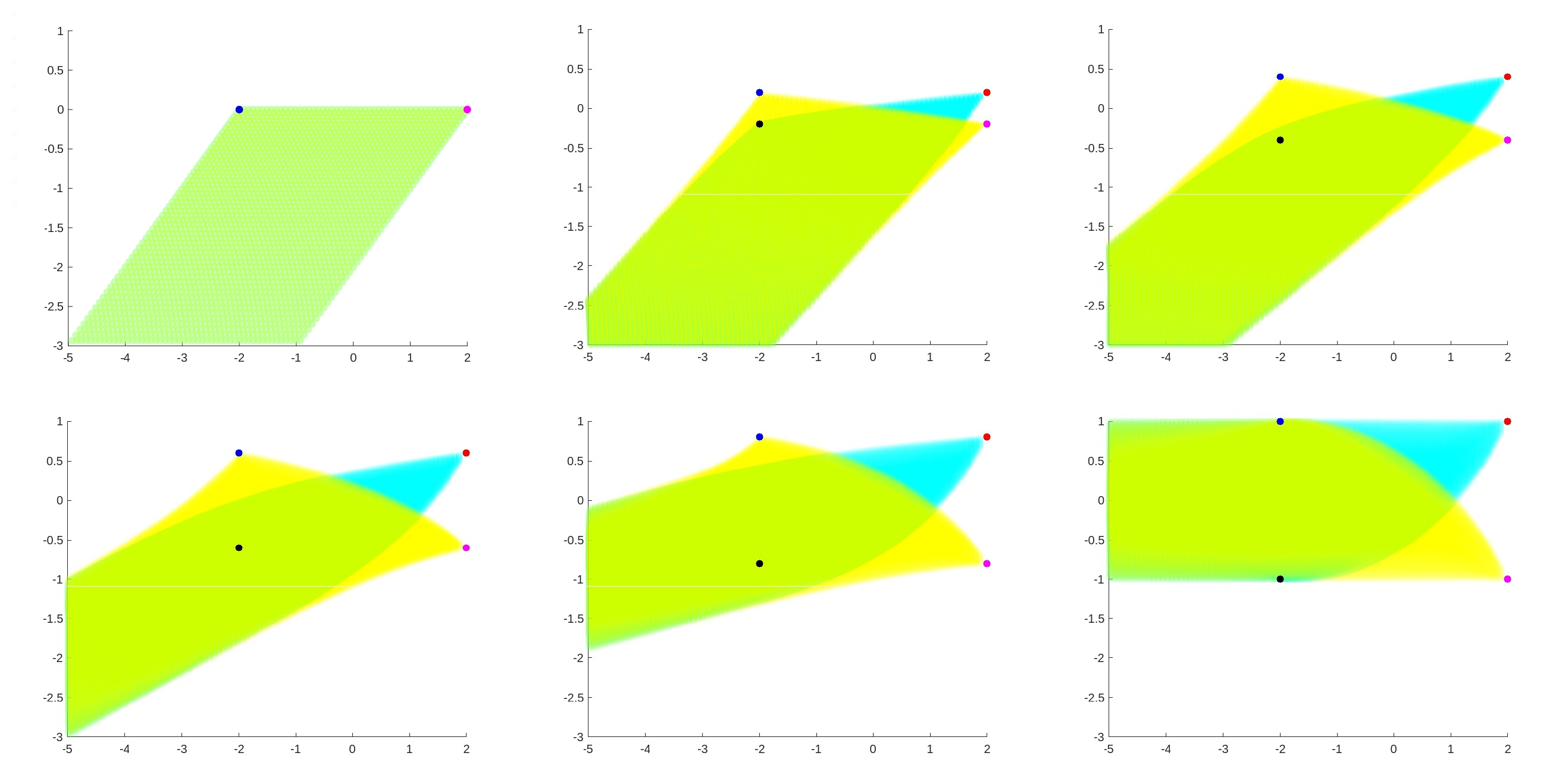}
			\caption{Image of the momentum map of System $2$ for values of $t$ between $0$ (top left) and $1$ (bottom right) by intervals of $0.2$. The images of the four fixed points $p_{+,+},p_{+,-},p_{-,+},p_{-,-}$ are depicted, respectively, in red, black, magenta and blue. The image of the submanifold $\mathbb{S}^+\times \mathbb{S}^2$ is depicted in sky blue and the image of the submanifold $\mathbb{S}^-\times \mathbb{S}^2$ is depicted in yellow, and both of them have been obtained numerically.}
			\label{fig:cam2}
		\end{figure}
		
		\subsection{Critical points of System 3}
		
		We take $Z = \{z_2 = 0\}\subset M$, endow $M$ with the $b$-symplectic form $\omega=-(R_1\omega_{\mathbb{S}^2} +  R_2 \frac{1}{z_2}\omega_{\mathbb{S}^2})$ and modify $L$ to: $L(z_1,\theta_1,z_2,\theta_2) = R_1 z_1 + R_2 \log|z_2|$. The $b$-symplectic form in cylindrical coordinates has the expression
		\[\omega = R_1 dz_1 \wedge d\theta_1 + R_2 \frac{dz_2}{z_2} \wedge d\theta_2,\]
		
		and in Cartesian coordinates it is
		\[\omega = - \frac{R_1}{\sqrt{1 - x_1^2 - y_1^2}} dx_1\wedge dy_1 - \frac{R_2}{1 - x_2^2 - y_2^2} dx_2 \wedge dy_2 .\]
		
		The expressions of $(L,H)$ in cylindrical coordinates are
		\[\left\{ \begin{array}{l} L(z_1,\theta_1,z_2,\theta_2) = R_1 z_1 + R_2 \log|z_2| \\ H(z_1,\theta_1,z_2,\theta_2) = (1 - t) z_1 + t\left(\sqrt{(1 - z_1^2)(1 - z_2^2)} \cos(\theta_1 - \theta_2) + z_1 z_2\right) \end{array} \right. ,\]
		and in Cartesian coordinates
		\[\left\{ \begin{array}{l} L(x_1,y_1,x_2,y_2) = R_1 \sqrt{1 - x_1^2 - y_1^2} + \frac12 R_2 \log \left| 1 - x_2^2 - y_2^2 \right| \\ H(x_1,y_1,x_2,y_2) = (1-t) \sqrt{1 - x_1^2 - y_1^2} + t\left( x_1 x_2 + y_1 y_2 + \sqrt{(1 - x_1^2 - y_1^2) (1 - x_2^2 - y_2^2)} \right) \end{array} \right. .\]
		
		\begin{proposition}
			System 3 has $4$ fixed points at the double poles $p_{\pm,\pm}=((0,0,\pm1),(0,0,\pm1))\in\mathbb{S}^2\times\mathbb{S}^2$. The fixed points $p_{+,+}$, $p_{+,-}$ and $p_{-,+}$ are non-degenerate of elliptic-elliptic type for all values of $t$, while $p_{-,-}$ is non-degenerate and of elliptic-elliptic type if $t<t^-$ or $t>t^+$, of focus-focus type if $t^- < t < t^+$ and degenerate if $t\in\{t^-,t^+\}$, where $$t^\pm = \frac{R_2}{2R_2+R_1\mp 2\sqrt{R_1 R_2}}.$$
			\label{prop:singularitiescam3}
		\end{proposition}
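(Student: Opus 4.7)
The plan is to mirror closely the proofs of Propositions \ref{prop:singularitiescam1} and \ref{prop:singularitiescam2}: first locate the fixed points, then linearise the system at each of them in a Cartesian chart, and finally match the resulting linear data to the classical matrices $A^0_{p_{\varepsilon_1,\varepsilon_2}}$ computed in Section \ref{subsec:localcam} so that the Williamson type can be read off directly.

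First I would show that the only points at which the rank of $dF=(dL,dH)$ vanishes are the four double poles $p_{\pm,\pm}$. In double cylindrical coordinates $dL=R_1\,dz_1+R_2\frac{dz_2}{z_2}$, which only vanishes where $dz_1=0$ and $dz_2=0$; combined with the vanishing condition on $dH$, this forces $x_1=y_1=0$ on the first sphere and $x_2=y_2=0$ on the second. The four double poles are therefore the only candidates, and all four are indeed fixed points since $X_L$ and $X_H$ vanish there.

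Next, at each pole $p_{\varepsilon_1,\varepsilon_2}$ I would work in the Cartesian chart $(\varphi_1,U_1^{\varepsilon_1})\times(\varphi_2,U_2^{\varepsilon_2})$ and compute $\Omega$, $d^2L$, $d^2H$, and then $A^3_L:=\Omega^{-1}d^2L$ and $A^3_H:=\Omega^{-1}d^2H$. The key structural difference from Systems 1 and 2 is that now the $b$-singular direction is on the second $\mathbb{S}^2$, so the matrix $\Omega$ picks up a factor $-\varepsilon_1 R_1$ in the first block and $-R_2$ (without any $\varepsilon_2$) in the second block, i.e.\ the roles of the two factors are exchanged compared to System 1. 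I would then form the linear combination $A^3:=A^3_L+A^3_H$, exhibit the four matrices $A^3_{p_{\varepsilon_1,\varepsilon_2}}$ explicitly, and compare them to $A^0_{p_{\varepsilon_1,\varepsilon_2}}$ from Section \ref{subsec:localcam}. The expected outcome of the comparison (the heart of the proof) is that the characteristic polynomials of $A^3_{p_{+,+}}$, $A^3_{p_{+,-}}$ and $A^3_{p_{-,+}}$ coincide (up to the possible swap $\varepsilon_2\leftrightarrow -\varepsilon_2$ in the bookkeeping) with polynomials whose only roots are purely imaginary for every $t\in[0,1]$, while the characteristic polynomial of $A^3_{p_{-,-}}$ coincides with $P^0_{+,-}(\lambda)$. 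The classification of the roots of $P^0_{+,-}$ recalled in Section \ref{subsec:localcam} then yields the stated dichotomy at $p_{-,-}$: elliptic-elliptic for $t<t^-$ or $t>t^+$, focus-focus for $t^-<t<t^+$, and degenerate at $t\in\{t^-,t^+\}$.

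The main obstacle is the bookkeeping: unlike for System 1, there is no clean global involution (analogous to Lemma \ref{lem:symmetrycase1}) that reduces the number of cases, because the $H$-component of System 3 is not invariant under the natural antipodal map on the second sphere. I therefore expect to have to carry out all four Hessian computations by hand and to verify the identification of characteristic polynomials with those of $A^0_{p_{\pm,\pm}}$ term by term, paying close attention to the signs carried by the $\varepsilon_i$ factors. Once these identifications are in place, the non-degeneracy and Williamson type at each pole follow from the known analysis of the classical coupled angular momenta, and the proposition is established.
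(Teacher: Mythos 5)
Your proposal is correct and follows essentially the same route as the paper: locate the four fixed points at the double poles, compute $\Omega$, $d^2L$, $d^2H$ and the operators $A^3_L$, $A^3_H$ in the Cartesian charts $(\varphi_1,U_1^{\varepsilon_1})\times(\varphi_2,U_2^{\varepsilon_2})$, and identify the resulting matrices or their characteristic polynomials with the classical data of Section \ref{subsec:localcam} (in particular, matching $A^3_{p_{-,-}}$ with $P^0_{+,-}(\lambda)$, which is exactly the identification the paper uses to obtain the dichotomy at $p_{-,-}$). Your observation that no global symmetry is available and that all four poles must be treated by direct computation also agrees with how the paper handles System 3.
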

		
		\begin{proof}
			The only fixed points of the system are the four double poles $p_{\pm,\pm}=((0, 0, \pm 1), (0, 0, \pm 1)) \in \mathbb{S}^2 \times \mathbb{S}^2$. At each fixed point $p_{\varepsilon_1,\varepsilon_2}$, which corresponds to $(0,0,0,0)$ in coordinates $(x_1,y_1,x_2,y_2)$ in the chart $(\varphi_1,U_1^{\varepsilon_1})\times (\varphi_2,U_2^{\varepsilon_2})$, we have:
			\begin{equation*}
				\Omega= 
				\begin{pmatrix}
					0 & -\varepsilon_1 R_1 & 0 & 0\\
					\varepsilon_1 R_1 & 0 & 0 & 0\\
					0 & 0 & 0 & - R_2\\
					0 & 0 & R_2 & 0
				\end{pmatrix},\quad\quad
				d^2L = 
				\begin{pmatrix}
					-\varepsilon_1 R_1 & 0 & 0 & 0\\
					0 & -\varepsilon_1 R_1 & 0 & 0\\
					0 & 0 & - R_2 & 0\\
					0 & 0 & 0 & - R_2
				\end{pmatrix},
			\end{equation*}
			\begin{equation*}
				d^2H= 
				\begin{pmatrix}
					\varepsilon_1(-1+t-\varepsilon_2 t) & 0 & t & 0\\
					0 & \varepsilon_1(-1+t-\varepsilon_2 t) & 0 & t\\
					t & 0 & -\varepsilon_1 \varepsilon_2 t & 0\\
					0 & t & 0 & -\varepsilon_1 \varepsilon_2 t
				\end{pmatrix}.
			\end{equation*}
			The matrices $d^2L$ and $d^2H$ are independent for any $t,\varepsilon_1,\varepsilon_2$ and give raise to $A^3_L:=\Omega^{-1}d^2L$ and $A^3_H:=\Omega^{-1}d^2H$, which have the expressions:
			\begin{equation*}
				A^3_L= 
				\begin{pmatrix}
					0 & -1 & 0 & 0\\
					1 & 0 & 0 & 0\\
					0 & 0 & 0 & -1\\
					0 & 0 & 1 & 0
				\end{pmatrix},\quad\quad
				A_H^3= 
				\begin{pmatrix}
					0 & \frac{-\varepsilon_2 t + t - 1}{R_1} & 0 & \frac{t}{\varepsilon_1 R_1} \\
					- \frac{-\varepsilon_2 t + t - 1}{R_1} & 0 & - \frac{t}{\varepsilon_1 R_1} & 0\\
					0 & \frac{t}{R_2} & 0 & - \frac{\varepsilon_1\varepsilon_2 t}{R_2} \\
					-\frac{t}{R_2} & 0 & \frac{\varepsilon_1\varepsilon_2 t}{R_2} & 0
				\end{pmatrix}.
			\end{equation*}
			The linear combination $A^3:=A_L^3 + A_H^3$ has the form
			\begin{equation*}
				A^3=\begin{pmatrix}
					0 & \frac{-\varepsilon_2 t + t - 1}{R_1} -1 & 0 & \frac{t}{\varepsilon_1 R_1} \\
					- \frac{-\varepsilon_2 t + t - 1}{R_1} +1 & 0 & - \frac{t}{\varepsilon_1 R_1} & 0\\
					0 & \frac{t}{R_2} & 0 & - \frac{\varepsilon_1\varepsilon_2 t}{R_2} -1\\
					-\frac{t}{R_2} & 0 & \frac{\varepsilon_1\varepsilon_2 t}{R_2} +1 & 0
				\end{pmatrix}.
			\end{equation*}
			
			At each of the four poles $p_{\varepsilon_1,\varepsilon_2}$, $A^3$ is:
			\small
			\begin{equation*}
				A^3_{p_{+,+}}=\begin{pmatrix}
					0 & - \frac{1}{R_1} -1 & 0 & \frac{t}{R_1}\\
					\frac{1}{R_1} +1 & 0 & -\frac{t}{R_1} & 0\\
					0 & \frac{t}{R_2} & 0 & - \frac{t}{R_2} - 1\\
					-\frac{t}{R_2} & 0 & \frac{t}{R_2} + 1 & 0
				\end{pmatrix},
				A^3_{p_{+,-}}=\begin{pmatrix}
					0 & \frac{2t-1}{R_1} -1 & 0 & \frac{t}{R_1}\\
					-\frac{2t-1}{R_1} +1 & 0 & -\frac{t}{R_1} & 0\\
					0 & \frac{t}{R_2} & 0 & \frac{t}{R_2} -1\\
					-\frac{t}{R_2} & 0 & - \frac{t}{R_2} +1 & 0
				\end{pmatrix},
			\end{equation*}
			\begin{equation*}
				A^3_{p_{-,+}}=\begin{pmatrix}
					0 & -\frac{1}{R_1} -1 & 0 & -\frac{t}{R_1}\\
					\frac{1}{R_1} +1 & 0 & \frac{t}{R_1} & 0\\
					0 & \frac{t}{R_2} & 0 &  \frac{t}{R_2} -1\\
					-\frac{t}{R_2} & 0 & - \frac{t}{R_2} +1 & 0
				\end{pmatrix},
				A^3_{p_{-,-}}=\begin{pmatrix}
					0 & \frac{2t-1}{R_1} -1 & 0 & -\frac{t}{R_1}\\
					-\frac{2t-1}{R_1} +1 & 0 & \frac{t}{R_1} & 0\\
					0 & \frac{t}{R_2} & 0 &  -\frac{t}{R_2} -1\\
					-\frac{t}{R_2} & 0 & \frac{t}{R_2} +1 & 0
				\end{pmatrix}.
			\end{equation*}
			\normalsize
			
			Since $A^3_{p_{+,+}}$ is identical to $A^0_{p_{+,+}}$ from Section \ref{subsec:localcam}, $p_{+,+}$ is non-degenerate of elliptic-elliptic type for all values of $t$. Similarly, since $A^3_{p_{-,+}}$ is identical to $A^0_{p_{-,+}}$, $p_{-,+}$ is also non-degenerate of elliptic-elliptic type for all values of $t$.
			
			On the other hand, by direct computation one can see that the characteristic polynomial of $A^3_{p_{+,-}}$ coincides with $P^0_{-,-}(\lambda)$ defined in Section \ref{subsec:localcam}. Then, $p_{+,-}$ is a non-degenerate fixed point of elliptic-elliptic type for all values of $t$. Similarly, the characteristic polynomial of $A^3_{p_{-,-}}$ coincides with $P^0_{+,-}(\lambda)$ of Section \ref{subsec:localcam}, and then $p_{-,-}$ is non-degenerate and of elliptic-elliptic type if $t<t^-$ or $t>t^+$, of focus-focus type if $t^- < t < t^+$ and degenerate if $t\in\{t^-,t^+\}$, where
			
			$$t^\pm = \frac{R_2}{2R_2+R_1\mp 2\sqrt{R_1 R_2}}.$$
		\end{proof}
		
		\begin{figure}[ht!]
			\vspace{20mm}
			\centering
			\includegraphics[scale=0.17]{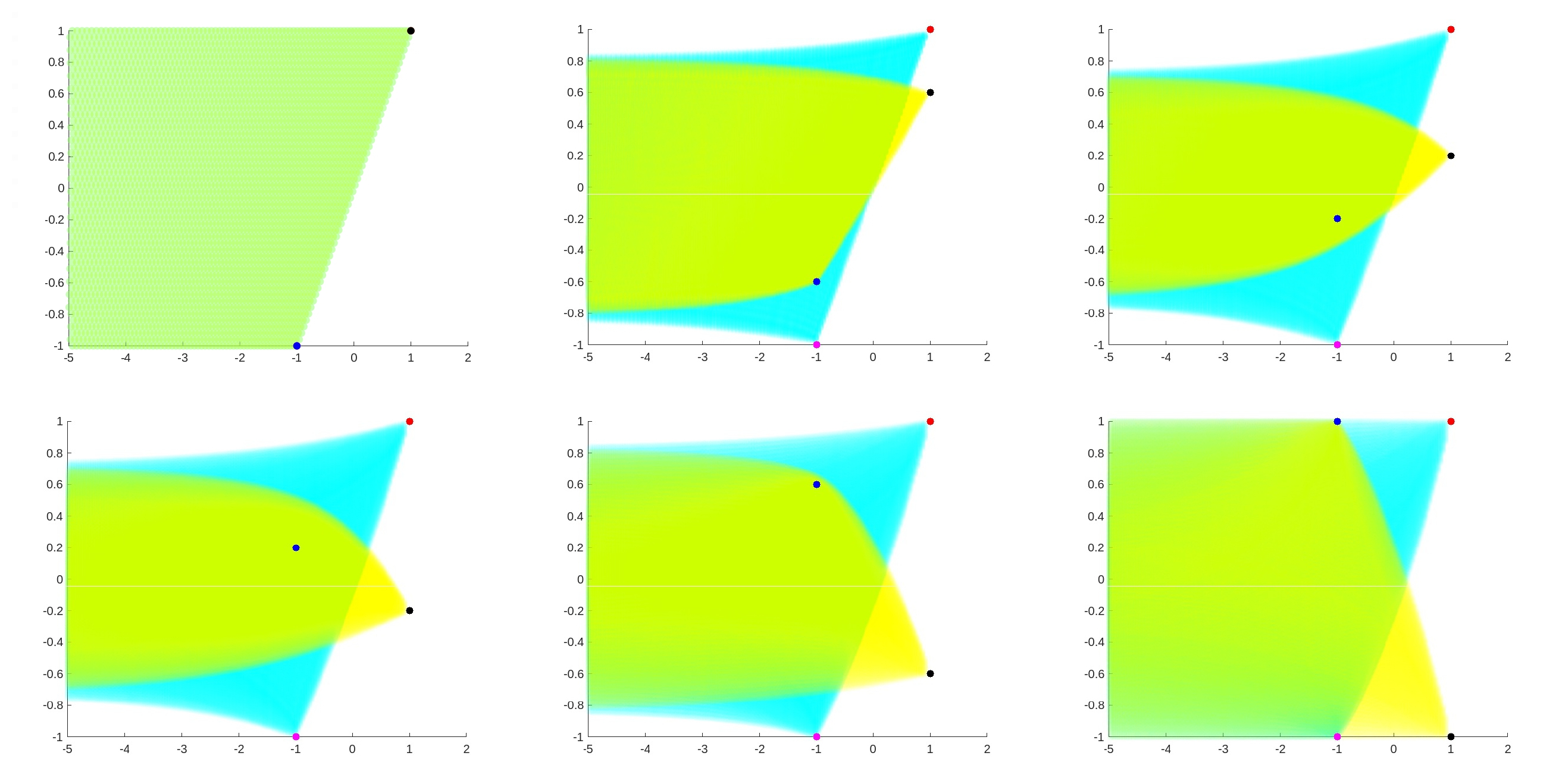}
			\caption{Image of the momentum map of System $3$ for values of $t$ between $0$ (top left) and $1$ (bottom right) by intervals of $0.2$. The images of the four fixed points $p_{+,+},p_{+,-},p_{-,+},p_{-,-}$ are depicted, respectively, in red, black, magenta and blue. The image of the submanifold $\mathbb{S}^2\times \mathbb{S}^2_+$ is depicted in sky blue and the image of the submanifold $\mathbb{S}^2\times \mathbb{S}^2_-$ is depicted in yellow, and both of them have been obtained numerically.}
			\label{fig:cam3}
		\end{figure}
		
		\vfill

		\clearpage
		
		\bibliographystyle{calpha}
		\bibliography{references}
		
	\end{document}